\newtheorem{theo}{Theorem}
\newtheorem{prop}{Proposition}[section]
\newtheorem{coro}[prop]{Corollary}
\newtheorem{defin}[prop]{Definition}
\newtheorem{lemma}[prop]{Lemma}
\theoremstyle{definition}
\theoremstyle{remark}
\newtheorem{case}{Case}
\newcommand{\half}{{\frac{1}{2}}}
\newcommand{\NN}{{\mathbb{N}}}
\newcommand{\ZZ}{{\mathbb{Z}}}
\newcommand{\RR}{{\mathbb{R}}}
\newcommand{\dist}{\operatorname{dist}}
\newcommand{\interior}{\operatorname{int}}
\newcommand{\cR}{\mathcal{R}}
\newcommand{\cD}{\mathcal{D}}
\newcommand{\cS}{\mathcal{S}}
\newcommand{\clS}{\bar{\cS}}
\newcommand{\cA}{\mathcal{A}}
\newcommand{\cB}{\mathcal{B}}
\newcommand{\Tw}{\operatorname{Tw}}
\newcommand{\TW}{\operatorname{TW}}
\newcommand{\ccol}{\operatorname{color}} %cube color
\newcommand{\wind}{\operatorname{wind}}
\newcommand{\Aout}{\cA^{\vu}}
\newcommand{\Link}{\operatorname{Link}}
\newcommand{\Wr}{\operatorname{Wr}}
\newcommand{\ST}{T_{\operatorname{sym}}}
\newcommand{\bB}{\mathbf{B}}
\newcommand{\tv}{\vec{v}}
\newcommand{\vu}{\vec{u}}
\newcommand{\vw}{\vec{w}}
\newcommand{\vbeta}{\vec{\beta}}
\newcommand{\ex}{\vec{\mathbf{i}}}
\newcommand{\ey}{\vec{\mathbf{j}}}
\newcommand{\ez}{\vec{\mathbf{k}}}
\newcommand{\plshalf}[1]{{#1}^\sharp}
\newcommand{\base}{\vw}
\newcommand{\tbase}{t_{\base}}
\newcommand{\tbasez}{t_{\ez}}
\newcommand{\tbasew}{t_{\vw}}
\newcommand{\subjclass}[2][2010]{%
  \let\@oldtitle\@title%
  \gdef\@title{\@oldtitle\footnotetext{#1 \emph{Mathematics Subject Classification.} #2}}%
}
\newcommand{\keywords}[1]{%
  \let\@@oldtitle\@title%
  \gdef\@title{\@@oldtitle\footnotetext{\emph{Key words and phrases.} #1.}}%
}
\begin{document}
\title{Domino tilings of three-dimensional regions:\\flips and twists}
\author{Pedro H. Milet \and Nicolau Saldanha} 

\subjclass{Primary 05B45, 52C22; Secondary 57M25, 05C70, 52C20.}
\keywords{Three-dimensional tilings, 
dominoes,
dimers,
flip accessibility,
connectivity by local moves,
writhe,
knot theory}
\maketitle

\begin{abstract}
In this paper, we consider domino tilings of regions of the form $\cD \times [0,n]$, where $\cD$ is a simply connected planar region and $n \in \NN$. It turns out that, in nontrivial examples, the set of such tilings is not connected by \emph{flips}, i.e., the local move performed by removing two adjacent dominoes and placing them back in another position.
We define an algebraic invariant, the \emph{twist}, which partially characterizes the connected components by flips of the space of tilings of such a region. 
Another local move, the \emph{trit}, consists of removing three adjacent dominoes, no two of them parallel, and placing them back in the only other possible position: performing a trit alters the twist by $\pm 1$.
We give a simple combinatorial formula for the twist, as well as an interpretation via knot theory. We prove several results about the twist, such as the fact that it is an integer and that it has additive properties for suitable decompositions of a region.
\end{abstract}

\section{Introduction}

%\footnote{2010 {\em Mathematics Subject Classification}.
%Primary 05B45, 52C22; Secondary 57M25, 05C70, 52C20.
%{\em Keywords and phrases} Three-dimensional tilings,
%dominoes,
%dimers,
%flip accessibility,
%connectivity by local moves,
%writhe,
%knot theory}

Tiling problems have received a lot of attention in the second half of the twentieth century, two-dimensional domino and lozenge tilings in particular. For instance, Kasteleyn~\cite{Kasteleyn19611209}, Conway~\cite{conway1990tiling}, Thurston~\cite{thurston1990}, Elkies, Propp et al.~\cite{jockusch1998random,cohn1996local,elkies1992alternating}, Kenyon and Okounkov~\cite{kenyonokounkov2006dimers,kenyonokounkov2006planar} have come up with very interesting techniques, ranging from abstract algebra to probability. More relevant to the discussion in this paper are the problems of flip accessibility (e.g., \cite{saldanhatomei1995spaces}).    

%Kasteleyn \cite{Kasteleyn19611209} showed that the number of domino tilings of a plane region can be calculated via the determinant of a matrix. Conway \cite{conway1990tiling} discovered a technique using groups, that in a number of interesting cases can be used to decide whether a given region can be tesselated by a set of given tiles. Thurston \cite{thurston1990} introduced height functions, and proposed a linear time algorithm for solving the problem of tileability of simply connected plane regions by dominoes. 
%In a more probabilistic direction, Jockusch, Propp and Shor \cite{jockusch1998random,cohn1996local} studied random domino tilings of the so-called Aztec Diamond (introduced in \cite{elkies1992alternating}), and showed the Arctic Circle Theorem. Richard Kenyon and Andrei Okounkov also studied random tilings and, in particular, their relation to Harnack curves \cite{kenyonokounkov2006dimers,kenyonokounkov2006planar}.
%In the case of domino tilings and of lozenge tilings, two tilings of a simply connected region can always be joined by a sequence of flips (see \cite{saldanhatomei1998overview} for an overview). Also, see \cite{saldanhatomei1995spaces} for considerations on flip connectivity in more general two-dimensional regions.     

Attempts to generalize some of these techniques to the three-dimensional case were made. The problem of counting domino tilings, even of contractible regions, is known to be computationally hard (see \cite{pak2013complexity}), but some asymptotic results, even for higher dimensions, date as far back as 1966 (see \cite{hammersley1966limit,ciucu1998improved,friedland2005theory}). In a different direction, some ``typically two-dimensional'' properties were carried over to specific families of three-dimensional regions (see \cite{randall2000random,linde2001rhombus,bodini2007tiling}). 

Others have considered difficulties with connectivity by local moves in dimension higher than two (see, e.g., \cite{randall2000random}). We propose an algebraic invariant that could help understand the structure of connected component by flips in dimension three.

\begin{figure}[ht]
	
	\centering
    \includegraphics[width=0.4\textwidth]{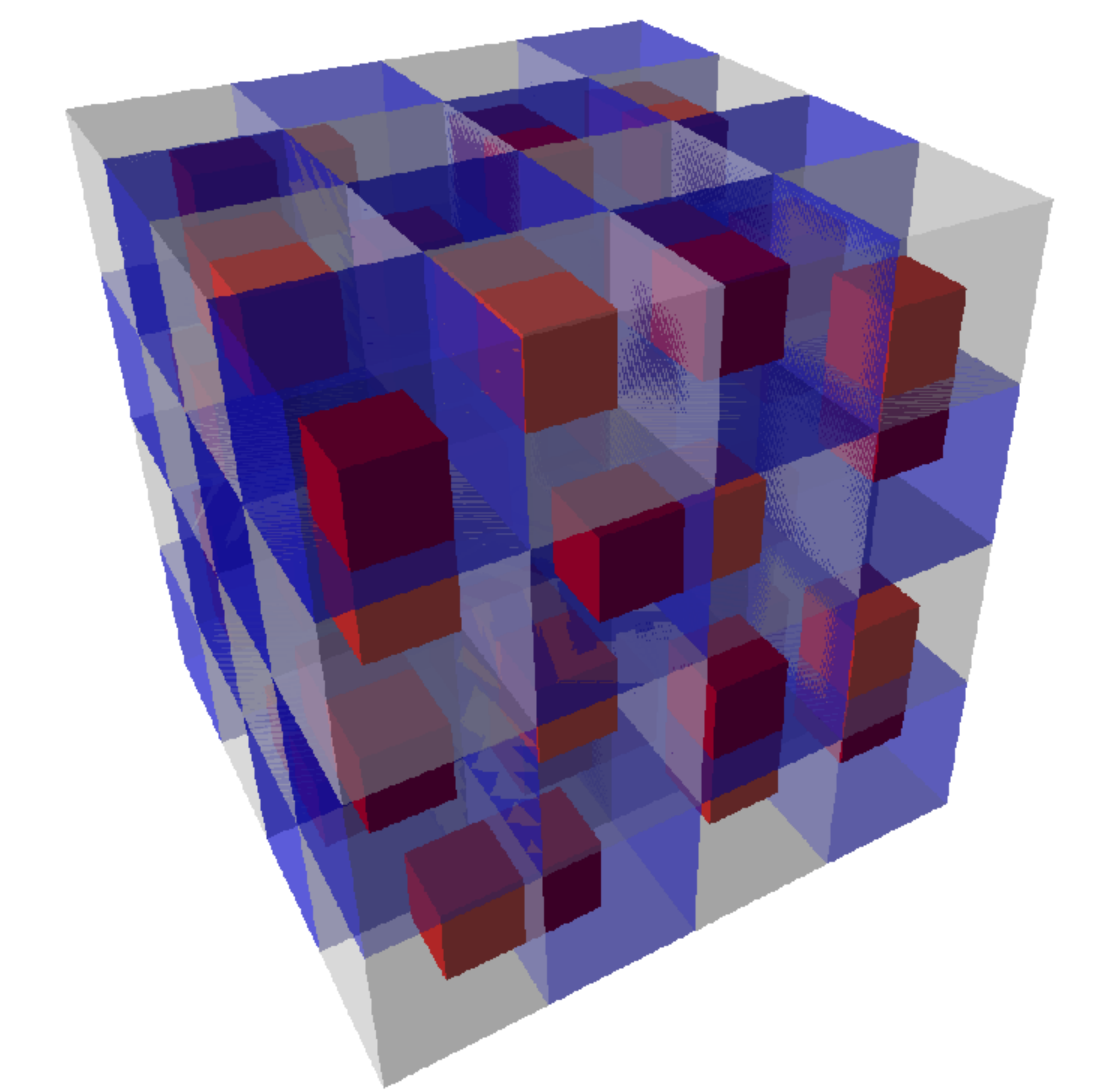}

		\caption{A tiling of a $4\times 4 \times 4$ box.}
			\label{fig:tiling444box}	

\end{figure}

In this paper, we investigate tilings of contractible regions by \emph{domino brick} pieces, or \emph{dominoes}, which are simply $2 \times 1 \times 1$ rectangular cuboids. An example of such a tiling is shown in Figure \ref{fig:tiling444box}. While this 3D representation of tilings may be attractive, it is also somewhat difficult to work with. Hence, we prefer to work with a 2D representation of tilings, which is shown in Figure \ref{fig:notation2Dexample}.

A key element in our study is the concept of a \emph{flip}, which is a straightforward generalization of the two-dimensional one. We perform a flip on a tiling by removing two (adjacent and parallel) domino bricks and placing them back in the only possible different position. The removed pieces form a $2 \times 2 \times 1$ slab, in one of three possible directions (see Figure \ref{fig:flipExample}). 
The \emph{flip connected component} of a tiling $t$
of a three-dimensional region $\cR$ is the set of all tilings of $\cR$
that can be reached from $t$ after a sequence of flips. 
It turns out that for large regions the number of flip connected components
is also large, but some of them may contain many tilings.
One of the aims of this paper is to study such connected components.

As in \cite{primeiroartigo}, we also consider the \emph{trit},
which is a move that happens within a $2 \times 2 \times 2$ cube with two opposite ``holes'', and which has an orientation (positive or negative). More precisely, we remove three dominoes, no two of them parallel, and place them back in the only other possible configuration (see Figure \ref{fig:negtrit_example}).   

\begin{figure}[hpt]
\centering
\def\svgwidth{0.8\columnwidth}
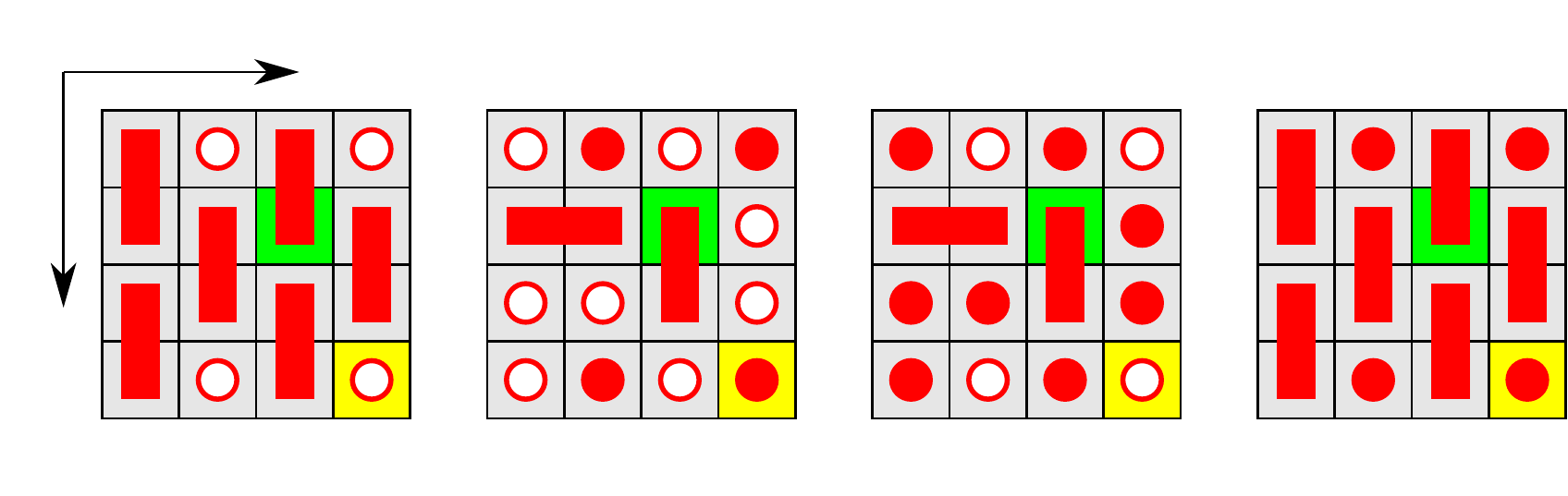
\caption{A tiling of the box $\cB = [0,4] \times [0,4] \times [0,4]$ box in our notation. The $x$ and $y$ axis are drawn, and $z$ points towards the paper, so that floors to the right have higher $z$ coordinates. Dominoes that are parallel to the $x$ or $y$ axis are represented as 2D dominoes, since they are contained in a single floor. Dominoes parallel to the $z$ axis are represented as circles, with the following convention: if the corresponding domino connects a floor with the floor to the left of it, the circle is painted red; otherwise, it is painted white. Thus, for example, in Figure \ref{fig:notation2Dexample}, each of the four white circles on the leftmost floor represents the same domino as the red circles on the floor directly to the right of it.
The squares highlighted in yellow represent cubes whose centers have the same $x$ and $y$ coordinates. Notice the top two yellow cubes are connected by a domino parallel to the $z$ axis, as well as the bottom two. The squares highlighted in green also represent cubes whose center have the same $x$ and $y$ coordinates, but the dominoes involving these cubes are not parallel to the $z$ axis.}
\label{fig:notation2Dexample}
\vspace{8pt}

\includegraphics[width=0.9\columnwidth]{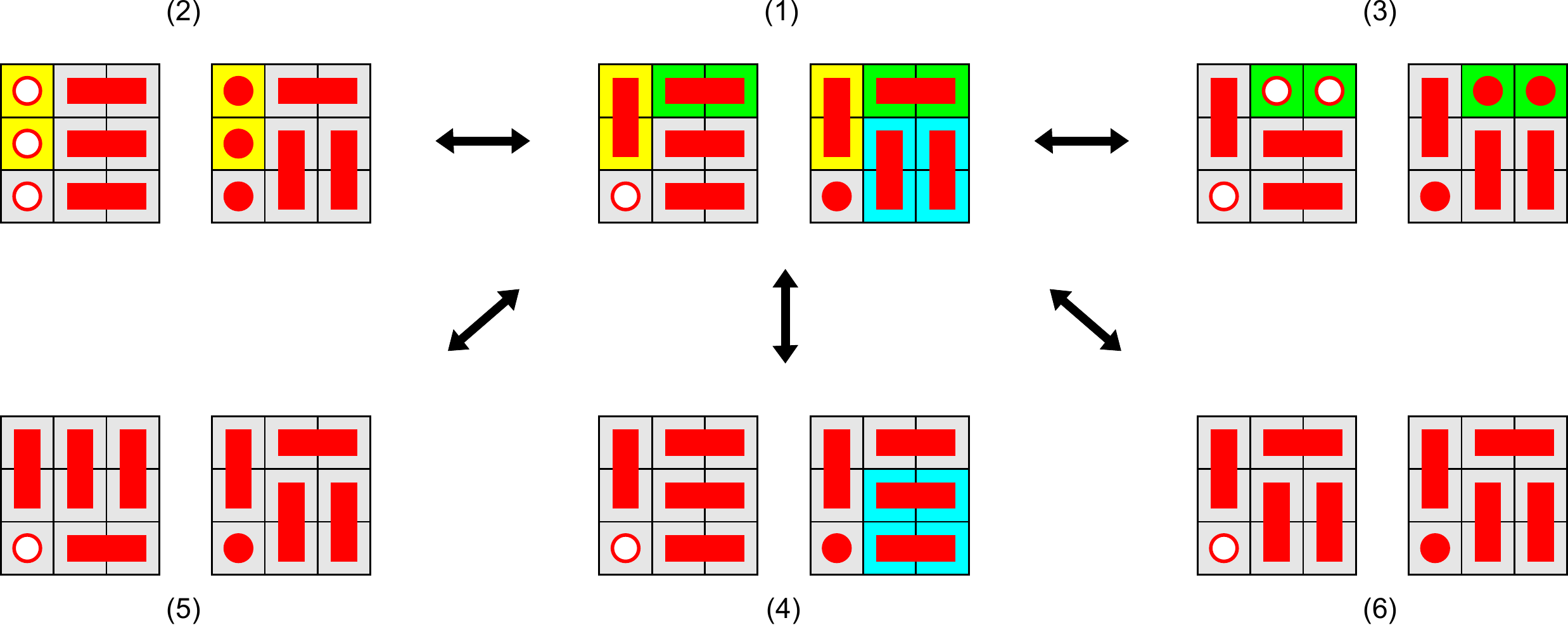}%
\caption{All flips available in tiling (1). The $2 \times 2 \times 1$ slabs involved in the flips taking (1) to (2), (3) and (4) are highlighted: they illustrate the three possible relative positions of dominoes in a flip.}%
\label{fig:flipExample}%

\vspace{10pt}

\includegraphics[width=0.6\columnwidth]{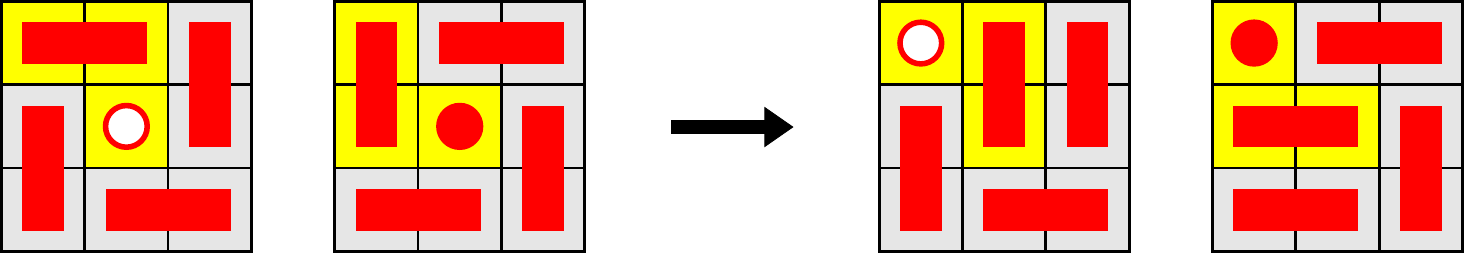}%
\caption{An example of a negative trit. The affected cubes are highlighted in yellow.}%
\label{fig:negtrit_example}%

\end{figure}

A \emph{cylinder} is a region of the form $\cD \times [0,n]$ (possibly rotated), where $\cD \subset \RR^2$ is a simply connected planar region with connected interior. In this paper, we introduce an algebraic invariant, the twist $\Tw(t)$, defined in Section \ref{sec:combTwistBoxes} for tilings of a cylinder.

In \cite{primeiroartigo}, we study cylinders with $n = 2$, called duplex regions. Although they are related to the general theory, tilings of these regions have some interesting characteristics of their own; in particular, we can define a polynomial $P_t(q)$ for tilings of duplex regions which is invariant by flips and which is finer than the twist. However, this construction breaks down when the duplex region is embedded in a region with more floors (see \cite{primeiroartigo} for details).

%For the moment, we shall restrict ourselves mostly to cylinders.
%, but we will in future works discuss much more general regions (see~\cite{terceiroartigo}).
%which is initially defined on cylinders but can be extended to a much broader class of simply connected regions (see Section \ref{sec:additiveProperties}).
%
\begin{theo}
\label{theo:main}
Let $\cR$ be a cylinder, and $t$ a tiling of $\cR$. The twist $\Tw(t)$ is an integer with the following properties:
\begin{enumerate}[label=\upshape(\roman*)]
	\item \label{item:flipInvariance} If a tiling $t_1$ is reached from $t_0$ after a flip, then $\Tw(t_1) = \Tw(t_0)$.
	\item \label{item:tritDifference} If a tiling $t_1$ is reached from $t_0$ after a single positive trit, then $\Tw(t_1) - \Tw(t_0) = 1$.
	\item \label{item:duplexes} If $\cR$ is a duplex region, then $\Tw(t) = P_t'(1)$ for any tiling $t$ of $\cR$.   
	\item \label{item:multiplexUnion} Suppose a cylinder $\cR = \bigcup_{1 \leq i \leq m} \cR_i$, where each $\cR_i$ is a cylinder (they need not have the same axis) and such that $i \neq j \Rightarrow \interior(\cR_i) \cap \interior(\cR_j) \neq \emptyset$. Then there exists a constant $K \in \ZZ$ such that, for any family $(t_i)_{1 \leq i \leq n}$, $t_i$ a tiling of $\cR_i$,
	$$\Tw\left(\bigsqcup_{1 \leq i \leq m} t_{i}\right) = K + \sum_{1 \leq i \leq m} \Tw(t_{i}).$$
	%For each $i$, let $t_{i,0}$ and $t_{i,1}$ be two tilings of $\cR_i$. Then:
%$$\Tw\left(\bigsqcup_{1 \leq i \leq m} t_{i,0}\right) - \Tw\left(\bigsqcup_{1 \leq i \leq m} t_{i,1}\right) = \sum_{1 \leq i \leq m} (\Tw(t_{i,0}) - \Tw(t_{i,1})).$$ 
	\end{enumerate}
	\end{theo}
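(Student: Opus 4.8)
The plan is to take the combinatorial formula for $\Tw(t)$ from Section~\ref{sec:combTwistBoxes} as the working definition and to exploit its two structural features: it is \emph{local} --- a sum of contributions $\tau(d,d')$ attached to ordered pairs of dominoes of $t$, with $\tau(d,d')$ depending only on the relative position and the directions of $d$ and $d'$ --- and it admits a reformulation as the writhe of an explicit link attached to $t$ (relative to a fixed reference tiling). Integrality of $\Tw(t)$ I would get from the latter: the contributions $\tau(d,d')$ are \emph{a priori} only rationals, but they reassemble into a signed count of crossings of a diagram, which is an integer. An alternative I in fact prefer, since it also sets up \ref{item:multiplexUnion}, is to fix one tiling $t_0$ of $\cR$ with $\Tw(t_0)=0$ manifestly (e.g.\ a ``brick-wall'' tiling) and to propagate integrality along flip/trit paths, once \ref{item:flipInvariance}, \ref{item:tritDifference} and the fact that the space of tilings of a cylinder is connected by flips and trits (a cylinder over a simply connected region being topologically a ball) are in hand.

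For \ref{item:flipInvariance}, the point is locality. A flip replaces two dominoes $d,d'$ spanning a $2\times2\times1$ slab $S$ by the two dominoes $e,e'$ spanning $S$ the complementary way, fixing every other domino of $t$; so $\Tw(t_1)-\Tw(t_0)$ collapses to the change in the self-interaction of the pair plus, for each remaining domino $f$, the quantity $\tau(f,d)+\tau(f,d')-\tau(f,e)-\tau(f,e')$ together with its mirror with $f$ in the first slot. The self-interaction terms agree because the two pairs are congruent by a rotation, under which $\tau$ is covariant. The $f$-terms vanish for $f$ far from $S$ --- the modification is invisible from afar --- so only finitely many survive, and one checks they cancel by a bounded case analysis over the three orientations of $S$. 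In knot-theoretic terms this is just the statement that a flip realizes an ambient isotopy of the associated link, hence preserves writhe; the step is tedious but routine.

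For \ref{item:tritDifference} the same reduction takes place inside the $2\times2\times2$ cube of the trit: a triple $\{d_1,d_2,d_3\}$ of pairwise non-parallel dominoes is replaced by the complementary triple, and $\Tw(t_1)-\Tw(t_0)$ is the change in the self-interaction of the triple plus $f$-contributions that cancel as before. Now, however, the two triples are mirror images of one another, and since the twist is a chiral quantity the self-interaction changes sign; the content of the item is that the net change is exactly $+1$ for a positive trit (hence $-1$ for a negative one, by reversal). Nailing this sign down --- matching the formula's conventions against the orientation of Figure~\ref{fig:negtrit_example} --- is, I expect, the main obstacle of the whole theorem, everything else being locality bookkeeping or a formal consequence of \ref{item:flipInvariance} and \ref{item:tritDifference}. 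For \ref{item:duplexes}, I would recall the flip-invariant polynomial $P_t(q)$ for duplex ($n=2$) tilings from \cite{primeiroartigo}, write both $P_t'(1)$ and $\Tw(t)$ in terms of the same underlying combinatorial data of a duplex tiling, and match them; since $P_t$ has integer coefficients this also reproves $\Tw(t)\in\ZZ$ for duplex regions.

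Finally, for \ref{item:multiplexUnion} I would avoid analyzing interactions between dominoes in different pieces and reduce to \ref{item:flipInvariance} and \ref{item:tritDifference}. Write $\cR=\bigcup_{1\le i\le m}\cR_i$ with the $\cR_i$ overlapping only along boundaries, and fix a reference tiling $t_i^0$ of each $\cR_i$. A flip or trit performed inside a single $\cR_i$ is a legal move in $\cR$ and, applied to $\bigsqcup_j t_j$, only alters the $i$-th piece. Joining $t_i^0$ to $t_i$ by such moves (flip--trit connectivity of $\cR_i$), \ref{item:flipInvariance} and \ref{item:tritDifference} give $\Tw(t_i)-\Tw(t_i^0)=$ (signed number of trits used); running the same sequences inside $\cR$ shows $\Tw\big(\bigsqcup_i t_i\big)-\Tw\big(\bigsqcup_i t_i^0\big)=\sum_i\big(\Tw(t_i)-\Tw(t_i^0)\big)$. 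Hence the identity holds with $K=\Tw\big(\bigsqcup_i t_i^0\big)-\sum_i\Tw(t_i^0)\in\ZZ$, and the identity itself shows $K$ is independent of the chosen references.
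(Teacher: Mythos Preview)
Your proposal has two genuine gaps, both structural.

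First, the ``locality'' picture for \ref{item:flipInvariance} and \ref{item:tritDifference} is incorrect. The contribution $\tau^{\vu}(d_0,d_1)$ is nonzero whenever $d_1$ lies in the $\vu$-shade of $d_0$, and that shade is a semi-infinite slab: a domino $f$ arbitrarily far from the flip site $S$ (in the $\vu$ direction) can still see the modification. So the $f$-terms do \emph{not} vanish for $f$ far from $S$; there is no bounded case analysis that finishes the job. What the paper actually shows (Proposition~\ref{prop:flipsAndTrits}) is that the sum of these long-range $f$-terms over all $f$ in the shade equals a signed count of cube colors, which vanishes precisely because cylinders are \emph{fully balanced} (Lemma~\ref{lemma:fullyBalancedMultiplex}). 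This is a global property of the region, not a local cancellation. Your knot-theoretic shortcut (``a flip realizes an ambient isotopy, hence preserves writhe'') is also not available as stated: writhe is not an isotopy invariant of a link, only of a diagram, so you would still need the analysis of Sections~\ref{sec:topologicalGroundwork}--\ref{sec:writheInterpretation} to make that precise.

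Second, and more seriously, your arguments for integrality and for \ref{item:multiplexUnion} both rest on flip--trit connectivity of the space of tilings of a cylinder. This is false: Section~\ref{sec:examples} exhibits cylinders (even of even depth) whose tiling spaces are disconnected under flips and trits. So there is no path along which to propagate. The paper handles integrality via the writhe formula (Proposition~\ref{prop:writheFormula}) plus a delicate parity argument (Lemma~\ref{lemma:writheDifference}) for even depth, and then an explicit construction (Lemma~\ref{lemma:treeConstruction}) to bootstrap the odd-depth case. For \ref{item:multiplexUnion}, the paper avoids connectivity entirely: it embeds everything in a large box and uses a second-difference identity (Lemma~\ref{lemma:secondOrderTwistDiff}) expressing $T^{\vu}(t_{00})-T^{\vu}(t_{01})-T^{\vu}(t_{10})+T^{\vu}(t_{11})$ as a sum of linking numbers, which vanish because one of the pieces is simply connected. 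That replaces your move-by-move telescoping with a single topological computation.
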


The definitions of twist are somewhat technical and involve a relatively lengthy discussion. We shall give two different but equivalent definitions: the first one, given in Section \ref{sec:combTwistBoxes}, is a sum over pairs of dominoes.  At first sight, this formula gives a number in $\frac{1}{4} \ZZ$ and depends on a choice of axis. However, it turns out that, for cylinders, this number is an integer, and different choices of axis yield the same result. The proof of this claim will be completed in Section \ref{sec:differentDirections}, and it relies on the second definition, which uses the concepts of writhe and linking number from knot theory (see, e.g., \cite{knotbook}).

%In \cite{terceiroartigo}, we extend the twist to a much broader class of simply connected regions (see also \cite{tese}).
%The simplest form is the following:
One can ask whether the twist can be extended to a broader class of regions. The following (see \cite{tese}) holds:
let $\cR$ be a simply connected region (not necessarily a cylinder), $t_0$ and $t_1$ be two tilings of $\cR$. Suppose $\cB \supset \cR$ is a box and $t_*$ is a tiling of $\cB \setminus \cR$ (it is not true for arbitrary regions $\cR$ that $\cB$ and $t_*$ exist). Define $\TW(t_0,t_1) = \Tw(t_0 \sqcup t_*) - \Tw(t_1 \sqcup t_*)$: this turns out to depend neither on the choice of box $\cB$ nor on the choice of tiling $t_*$. Therefore, if we choose a base tiling $t_0$ and define $\Tw(t) = \TW(t,t_0)$, then $\Tw(t)$ satisfies items \ref{item:flipInvariance} and \ref{item:tritDifference} in Theorem \ref{theo:main}. Different choices of base tiling only alter the twist by an additive constant.

%In that article, we also develop homological interpretations of the twist. These homological constructions are reminiscent of the two-dimensional height functions (see~\cite{thurston1990}), although they behave more like ``height forms''. 
%É possível dar uma interpretação homológicas do twist, similar ao ...; No entanto, não desenvolveremos esse ponto de vista nesse artigo.
In addition to the combinatorial and knot-theoretic interpretations developed in this article, it is also possible to give homological interpretations for the twist. These homological constructions are reminiscent of the two-dimensional height functions (see~\cite{thurston1990}), although they behave more like ``height forms''. 
The concept of flux (or flow), as in \cite{saldanhatomei1995spaces} and \cite{saldanhatomeiAnnuli}, also becomes relevant.
Although we will not discuss these constructions here, this homological point of view inspired many of our definitions.

One might also ask what the possible values
for the twist of a certain region are.
Some results in this direction are proved in \cite{tese}: for instance,
it turns out that, for a box of dimensions $L\times M \times N$
with $L \ge M \ge N$ (and $LMN$ even), the maximum possible value
for the twist is of the order of $LMN^2$.

% We present a sample of the results to be proved in that paper:
% 	
% 	\begin{theo}
% 	Let $B = [0,L] \times [0,M] \times [0,N]$ be a tileable box such that $L,M,N \geq 2$, with at least two of the three dimensions strictly larger than $2$. 
% 	Set $\Tw(B) = \{\Tw(t) |\allowbreak t \text{ tiling of } B\}$, $m(B) = \max \Tw(B)$ and $k(B) = \max\{k \in \ZZ | \ZZ \cap [-k,k] \subset \Tw(B)\}$. Then, for $C_0 = 1/1055, C_1 = 1/16$ we have
% 	$$
% 	C_0 LMN\min(L,M,N) \leq k(B) \leq m(B) \leq  C_1 LMN\min(L,M,N).
% 	$$
% 	In particular, $B$ has at least $2k(B) + 1 > 2C_0LMN\min(L,M,N)$ flip connected components.
%  \end{theo}

The present paper is structured in the following manner: 
Section \ref{sec:defsAndNotations} introduces some basic definitions and notations that will be used throughout the paper. 
In Section \ref{sec:combTwistBoxes}, we define the invariant for cylinders, and prove its most basic properties. 
%Sections \ref{sec:duplexRevisited}, \ref{sec:writheInterpretation} and \ref{sec:possibleValuesTwist} are more or less independent. 
%In Section \ref{sec:duplexRevisited}, we revisit duplex regions (regions with two identical floors), which are extensively investigated in \cite{primeiroartigo}, and prove that, in this case, $\Tw(t) = P_t'(1)$. 
%
In Sections \ref{sec:topologicalGroundwork}, \ref{sec:writheInterpretation}, \ref{sec:differentDirections} and \ref{sec:additiveProperties}, we present different aspects of a connection between the twist of tilings and a few classical concepts from knot theory.
Section \ref{sec:topologicalGroundwork} contains the ``topological groundwork'', which consists of a number of definitions and results that help establish topological interpretations of the twist, and which are extensively used in the sections that follow it.
In Section \ref{sec:writheInterpretation}, we introduce a different formula for the twist of cylinders, and show that this new formula allows us to prove (once again, via topology) that the twist must always be an integer.
In Section \ref{sec:differentDirections}, we prove that the value of the twist of cylinders does not depend on the choice of axis, which is one of the main results in the paper.
In Section \ref{sec:additiveProperties}, we discuss additive properties of the twist, and prove item \ref{item:multiplexUnion} in Theorem \ref{theo:main}.
Finally, Section \ref{sec:examples} contains some examples and counterexamples that help illustrate the theory. 
This paper closely corresponds to part of the first author's PhD thesis
\cite{tese};
the authors thank the examination board for helpful comments and suggestions.
The authors are also thankful for the generous support of
CNPq, CAPES and FAPERJ (Brazil). 

\section{Definitions and Notation}
\label{sec:defsAndNotations}

%In this paper, we shall follow the notations and conventions from \cite{primeiroartigo}, which we will explain here.
This section contains general notations and conventions that are used throughout the article, although definitions that involve a lengthy discussion or are intrinsic of a given section might be postponed to another section.  

If $n$ is an integer, $\plshalf{n}$ will denote $n + \half$ (in music theory, $D\sharp$ is a half tone higher than $D$ in pitch). We also define $\plshalf{\ZZ}$ to be the set $\{\plshalf{n} | n \in \ZZ\}$.

Given $\tv_1,\tv_2,\tv_3 \in \RR^3$, $\det(\tv_1,\tv_2,\tv_3) = \tv_1 \cdot (\tv_2 \times \tv_3)$ denotes the determinant of the $3 \times 3 \times 3$ matrix whose $i$-th line is $\tv_i$, $i=1,2,3$. If $\beta = (\vbeta_1, \vbeta_2, \vbeta_3)$ is a basis, write $\det(\beta) = \det(\vbeta_1, \vbeta_2, \vbeta_3)$.

We denote the three canonical basis vectors as $\ex = (1,0,0)$, $\ey = (0,1,0)$ and $\ez = (0,0,1).$ %Since we used some graphical programs to visualize some domino configurations, we eventually got used to drawing the axes as most graphical programs do. Therefore, in this paper, we usually draw $\ex$ pointing from left to right, $\ey$ pointing downwards and $\ez$ pointing towards the paper. 
We denote by $\Delta = \{\ex, \ey, \ez\}$ the set of canonical basis vectors, and $\Phi = \{\pm \ex, \pm \ey, \pm \ez\}$. 
Let $\bB = \{\beta = (\vbeta_1,\vbeta_2,\vbeta_3) | \vbeta_i \in \Phi, \det(\beta) = 1\}$ be the set of positively oriented bases with vectors in $\Phi$. 

A \emph{basic cube} is a closed unit cube  in $\RR^3$ whose vertices lie in $\ZZ^3$. For $(x,y,z) \in \ZZ^3$, the notation $C\left(\plshalf{x} , \plshalf{y}, \plshalf{z}\right)$ denotes the basic cube $(x,y,z) + [0,1]^3$, i.e., the closed unit cube whose center is $\left(\plshalf{x},  \plshalf{y}, \plshalf{z}\right)$; it is \emph{white} (resp. \emph{black}) if $x + y + z$ is even (resp. odd). If $C = C\left(\plshalf{x} , \plshalf{y}, \plshalf{z}\right)$, define $\ccol(C) = (-1)^{x+y+z+1}$, or, in other words, $1$ if $C$ is black and $-1$ if $C$ is white. A \emph{region} is a finite union of basic cubes. A \emph{domino brick} or \emph{domino} is the union of two basic cubes that share a face. A \emph{tiling} of a region is a covering of this region by dominoes with pairwise disjoint interiors.

 %A basic square whose center is $(\plshalf{x}, \plshalf{y})$ has the same color as $C\left(\plshalf{x} , \plshalf{y}, \plshalf{0}\right)$. When the context is sufficiently clear, we may sometimes omit the word basic and simply say cube or square.

We sometimes need to refer to planar objects. Let $\pi$ denote either $\RR^2$ or a \emph{basic plane} contained in $\RR^3$, i.e., a plane with equation $x = k$, $y = k$ or $z = k$ for some $k \in \ZZ$. 
A \emph{basic square in} $\pi$ is a unit square $Q \subset \pi$ with vertices in $\ZZ^2$ (if $\pi = \RR^2$) or $\ZZ^3$.
 A \emph{planar region} $D \subset \pi$ is a finite union of basic squares. 
%$\cD \subset \RR^2$: we often identify $\RR^2$ with the $\xy$, $\yz$ or $\xz$ planes in $\RR^3$, as in the following paragraph. \margem{Maybe name these planes? Canonical planes?}

%Let $\tv \in \Delta$, and let $\pi \perp \tv$ be a basic plane. Let $n \geq 1$ be a positive integer. 
A region $\cR$ is a
\emph{cubiculated cylinder} or \emph{multiplex region} 
if there exist a basic plane $\pi$ with normal vector $\tv \in \Delta$,
a simply connected planar region $\cD \subset \pi$ with connected interior 
and a positive integer $n$ 
such that 
$$\cR = \cD + [0,n]\tv = \{p + s\tv | p \in \cD, s \in [0,n]\};$$ 
we usually call $\cR$ a \emph{cylinder} for brevity.
The cylinder $\cR$ above has \emph{base} $\cD$, \emph{axis} $\tv$ and \emph{depth} $n$. 
For instance, a cylinder with axis $\ez$ and depth $n$ can be written as $\cD \times [k,k+n]$, where $\cD \subset \RR^2$. 
A $\tv$-cylinder means a cylinder with axis $\tv$. A \emph{duplex region} (see \cite{primeiroartigo}) is a cylinder with depth $2$.

We sometimes want to point out that the hypothesis of simple connectivity (of a cylinder) is not being used: therefore, a \emph{pseudocylinder} with base $\cD$, axis $\tv$ and depth $n$ has the same definition as above, except that the planar region $\cD \subset \pi$ is only assumed to have connected interior (and is not necessarily simply connected).

A \emph{box} is a region of the form $\cB = [L_0, L_1] \times [M_0, M_1] \times [N_0, N_1]$, where $L_i, M_i, N_i \in \ZZ$. Boxes are special cylinders, in the sense that we can take any vector $\tv \in \Delta$ as the axis. In fact, boxes are the only regions that satisfy the definition of cylinder for more than one axis.

Regarding notation, Figures \ref{fig:notation2Dexample}, \ref{fig:flipExample} and \ref{fig:negtrit_example} were drawn with $\beta = (\ex,\ey,\ez)$ in mind. However, any $\beta \in \bB$ allows for such representations, as follows: we draw $\vbeta_3$ as perpendicular to the paper (pointing towards the paper). If $\pi = \vbeta_3^{\perp}$, we then draw each floor $\cR \cap (\pi + [n,n+1]\vbeta_3)$ as if it were a plane region. Floors are drawn from left to right, in increasing order of $n$.

%We say that a region is \emph{flip connected} if the space of domino brick tilings of the region, thought of as a graph where two tilings are joined by an edge if they differ by a single flip, is connected. In other words, a region is flip connected if, given two tilings of that region, there exists a sequence of flips that takes one tiling to the other. 
The \emph{flip connected component} of a tiling $t$ of a region $\cR$ is the set of all tilings of $\cR$ that can be reached from $t$ after a sequence of flips. %hence, a region $\cR$ is flip connected if and only all tilings of $\cR$ lie in the same flip connected component. %We characterize all the flip connected boxes in Section \ref{sec:combTwistBoxes} (see Remark \ref{rem:flipConnectedBoxes}).

%Since plane domino tilings may be seen as a special case of domino brick tilings with only one floor, it follows, for instance, that $L \times M \times 1$ boxes are always flip connected. Additionally, it is very easy to show, by induction, that $L \times 2 \times 2$ boxes are flip connected (the reader may want to prove this as an exercise).

%However, no other box is flip connected, as we see in Section \ref{sec:combTwistBoxes}. For instance, the $3 \times 3 \times 2$ box has 229 tilings, two of which have no flip positions. These are shown in Figures \ref{fig:tilings332_noflips1} and \ref{fig:tilings332_noflips2}. This box has, in fact, three flip connected components (two of which contain just one tiling).

%\begin{figure}[ht]
%\centering
%\subfloat[][]{\includegraphics[scale=0.70]{}\label{fig:tilings332_noflips1}} \qquad \qquad
%\subfloat[][]{\includegraphics[scale=0.70]{}\label{fig:tilings332_noflips2}}
%
%\caption{The two tilings of a $3 \times 3 \times 2$ box that have no flip positions}
%\label{fig:tilings332_noflips}
%
%\end{figure}

Suppose $t$ is a tiling of a region $\cR$, and let $\cB = [l,l+2] \times [m,m+2] \times [n,n+2]$, with $l,m,n \in \NN$. Suppose $\cB \cap \cR$ contains exactly three dominoes of $t$, no two of them parallel: notice that this intersection can contain six, seven or eight basic cubes of $\cR$. 
Also, a rotation (it can even be a rotation, say, in the $XY$ plane), can take us either to the left drawing or to the right drawing in Figure \ref{fig:postrit}.

\begin{figure}[ht]
\centering
\includegraphics[width=0.6\columnwidth]{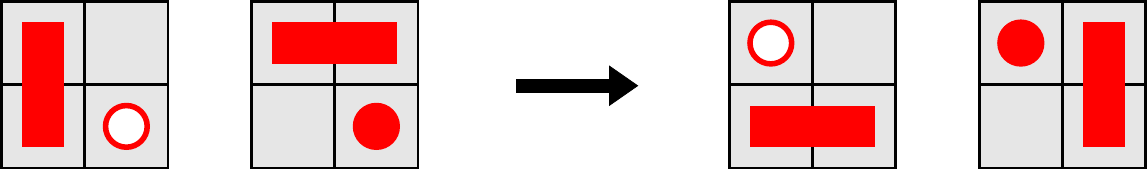}%
\caption{The anatomy of a positive trit (from left to right). The trit that takes the right drawing to the left one is a negative trit. The squares with no dominoes represent basic cubes that may or may not be in $\cR$ (see Figure \ref{fig:negtrit_example} for an example).}%
\label{fig:postrit}%
\end{figure}

If we remove the three dominoes of $t$ contained in $\cB \cap \cR$, there is only one other possible way we can place them back. This defines a move that takes $t$ to a different tiling $t'$ by only changing dominoes in $\cB \cap \cR$: this move is called a \emph{trit}. If the dominoes of $t$ contained in $\cB \cap \cR$ are a plane rotation of the left drawing in Figure \ref{fig:postrit}, then the trit is \emph{positive}; otherwise, it's \emph{negative}. Notice that the sign of the trit is unaffected by translations (colors of cubes don't matter) and rotations in $\RR^3$ (provided that these transformations take $\ZZ^3$ to $\ZZ^3$). A reflection, on the other hand, switches the sign (the drawing on the right can be obtained from the one on the left by a suitable reflection).

\section{The twist for cylinders}
\label{sec:combTwistBoxes}
For a domino $d$, define $\tv(d) \in \Phi$ to be the center of the black cube contained in $d$ minus the center of the white one. We sometimes draw $\tv(d)$ as an arrow pointing from the center of the white cube to the center of the black one.

For a set $X \subset \RR^3$ and $\vu \in \Phi$, we define the (\emph{open}) $\vu$-\emph{shade} of $X$ as
$$\cS^{\vu}(X) = \interior((X + [0,\infty)\vu) \setminus X) = \interior\left(\{ x + s \vu \in \RR^3 | x \in X, s \in [0,\infty)\} \setminus X\right),$$
where $\interior(Y)$ denotes the interior of $Y$.  
The \emph{closed} $\vu$-\emph{shade} $\clS^{\vu}(X)$ is the closure of $\cS^{\vu}(X)$.
We shall only refer to $\vu$-shades of unions of basic cubes or basic squares, such as dominoes.
%We most often refer to $\vu$-shades of dominoes (which are sets of $\RR^3$), but shades of unions  may also appear. 

%Let $\cR$ be a region, let $t$ be a tiling of $\cR$ and consider a domino $d$ of $t$. For $\vu \in \{\pm \ex, \pm \ey, \pm \ez\}$, we define the $\vu$-\emph{shade} of $d$ as $\cS^{\vu}(d) = \{p + n \vu | p \in d, n \in \ZZ, n > 0\}.$   

\begin{figure}[ht]
\centering
\includegraphics[width=0.7\columnwidth]{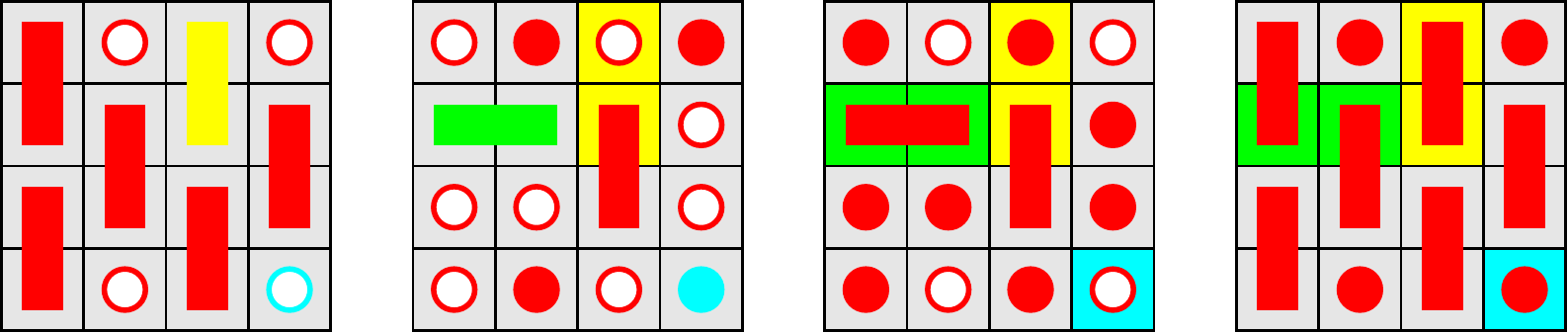}%
\caption{Tiling of a $4 \times 4 \times 4$ box, with three distinguished dominoes (painted yellow, green and cyan), whose $\ez$-shades are highlighted in the same color as they are. Notice that the yellow shade intersects four dominoes, the green shade intersects three, and the cyan shade, only one.}%
\label{fig:shadowExample}%
\end{figure} 

Given two dominoes $d_0$ and $d_1$ of $t$, we define the \emph{effect of} $d_0$ \emph{on} $d_1$ \emph{along} $\vu$, as:
$$\tau^{\vu}(d_0,d_1) = \begin{cases} \frac{1}{4} \det(\tv(d_1),\tv(d_0),\vu), &d_1 \cap \cS^{\vu}(d_0) \neq \emptyset \\ 0, &\mbox{otherwise} \end{cases}$$

In other words, $\tau^{\vu}(d_0, d_1)$ is zero unless the following three things happen: $d_1$ intersects the $\vu$-shade of $d_0$; neither $d_0$ nor $d_1$ are parallel to $\vu$; and $d_0$ is not parallel to $d_1$. When $\tau^{\vu}(d_0,d_1)$ is not zero, it's either $1/4$ or $-1/4$ depending on the orientations of $\tv(d_0)$ and $\tv(d_1)$.

For example, in Figure \ref{fig:shadowExample}, for $\vu = \ez$, the yellow domino $d_Y$ has no effect on any other domino: $\tau^{\ez}(d_Y,d) = 0$ for every domino $d$ in the tiling. The green domino $d_G$, however, affects the two dominoes in the rightmost floor which intersect its $\ez$-shade, and $\tau^{\ex}(d_G,d) = 1/4$ for both these dominoes.    

If $t$ is a tiling, we define the $\vu$-\emph{pretwist} as 
$$
T^{\vu}(t) = \sum_{d_0,d_1 \in t} \tau^{\vu}(d_0,d_1).
$$

For example, the tiling on the left of Figure \ref{fig:negtrit_example} has $\ez$-pretwist equal to $1$. To see this, notice that each of the four dominoes of the leftmost floor that are not parallel to $\ez$ has nonzero effect along $\ez$ on exactly one domino of the rightmost floor, and this effect is $1/4$ in each case. The reader may also check that the $\ez$-pretwist of the tiling in Figure \ref{fig:shadowExample} is $0$.

%A natural question at this point concerns how the choice of $\vu$ affects $T^{\vu}$.

\begin{lemma}
\label{lemma:twistNegatingDirection}
For any pair of dominoes $d_0$ and $d_1$ and any $\vu \in \Phi$,
$\tau^{\vu}(d_0,d_1) = \tau^{-\vu}(d_1,d_0)$. In particular, for a tiling $t$ of a region we have $T^{-\vu}(t) = T^{\vu}(t)$. 
\end{lemma}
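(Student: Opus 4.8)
The plan is to prove the pointwise identity $\tau^{\vu}(d_0,d_1)=\tau^{-\vu}(d_1,d_0)$ first and then deduce $T^{-\vu}(t)=T^{\vu}(t)$ by relabelling the summation variables. I would start from the purely algebraic fact that, by multilinearity and antisymmetry of the $3\times 3$ determinant,
$$\det(\tv(d_0),\tv(d_1),-\vu)=-\det(\tv(d_0),\tv(d_1),\vu)=\det(\tv(d_1),\tv(d_0),\vu),$$
so the two candidate values $\tfrac14\det(\tv(d_1),\tv(d_0),\vu)$ and $\tfrac14\det(\tv(d_0),\tv(d_1),-\vu)$ always agree. In particular, whenever this common number is $0$ — which happens exactly when two of $\tv(d_0),\tv(d_1),\vu$ are parallel — both $\tau^{\vu}(d_0,d_1)$ and $\tau^{-\vu}(d_1,d_0)$ are $0$, no matter what the two shade conditions do, and there is nothing left to check.

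So I may assume the determinant is nonzero, i.e., that $\tv(d_0),\tv(d_1),\vu$ are pairwise non-parallel, hence equal to the three coordinate directions up to sign. Applying an orientation-preserving rotation of $\RR^3$ preserving the lattice $\ZZ^3$, I would reduce to the case $\vu=\ez$, with $d_0=[a,a+2]\times[b,b+1]\times[c,c+1]$ parallel to $\ex$ and $d_1=[a',a'+1]\times[b',b'+2]\times[c',c'+1]$ parallel to $\ey$ (the configuration with $\ex$ and $\ey$ interchanged being identical). A direct computation of the two shades involved gives
$$\cS^{\ez}(d_0)=(a,a+2)\times(b,b+1)\times(c+1,\infty),\qquad \cS^{-\ez}(d_1)=(a',a'+1)\times(b',b'+2)\times(-\infty,c').$$
Intersecting the first with $d_1$ and the second with $d_0$, and using that all endpoints are integers, I would check that each of the conditions $d_1\cap\cS^{\ez}(d_0)\neq\emptyset$ and $d_0\cap\cS^{-\ez}(d_1)\neq\emptyset$ is equivalent to one and the same system of inequalities: the $x$-projections of $d_0$ and $d_1$ have overlapping interiors, their $y$-projections have overlapping interiors, and $c'\geq c+1$. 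Hence the two shade conditions hold or fail simultaneously, and when they hold the identity $\tau^{\vu}(d_0,d_1)=\tau^{-\vu}(d_1,d_0)$ follows from the determinant computation above; when they fail, both sides are $0$.

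Finally, using the identity just established together with the fact that the two summation variables both range over all dominoes of $t$,
$$T^{\vu}(t)=\sum_{d_0,d_1\in t}\tau^{\vu}(d_0,d_1)=\sum_{d_0,d_1\in t}\tau^{-\vu}(d_1,d_0)=\sum_{d_0,d_1\in t}\tau^{-\vu}(d_0,d_1)=T^{-\vu}(t),$$
where the third equality is just a change of names of the indices. I expect the only nonroutine step to be the explicit determination of the shades $\cS^{\ez}(d_0)$ and $\cS^{-\ez}(d_1)$ together with the verification that they impose the same intersection condition on the other domino; the determinant algebra and the reindexing are bookkeeping.
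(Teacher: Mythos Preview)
Your proof is correct and follows essentially the same approach as the paper. The paper's version is more concise: it asserts directly that $d_1 \cap \cS^{\vu}(d_0) \neq \emptyset$ if and only if $d_0 \cap \cS^{-\vu}(d_1) \neq \emptyset$ (for all pairs of dominoes, without any case split on the determinant) together with the determinant identity $\det(\tv(d_1),\tv(d_0),\vu) = \det(\tv(d_0),\tv(d_1),-\vu)$, so your rotation to coordinates and explicit verification of the shade conditions, while correct, supply more detail than the paper deems necessary.
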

\begin{proof}
Just notice that
$d_1 \cap \cS^{\vu}(d_0) \neq\emptyset$ if and only if $d_0 \cap \cS^{-\vu}(d_1) \neq \emptyset,$ 
and $\det(\tv(d_1),\tv(d_0),\vu) = \det(\tv(d_0),\tv(d_1),-\vu)$.
\end{proof}
%First, notice that $\tau^{\vu}(d_0,d_1) = \tau^{-\vu}(d_1,d_0)$: in fact, $d_1 \cap \interior(\cS^{\vu}(d_0)) \neq \emptyset \Leftrightarrow d_0 \cap \interior(\cS^{-\vu}(d_1)) \neq \emptyset$, and $\det(\tv(d_1),\tv(d_0),\vu) = \det(\tv(d_0),\tv(d_1),-\vu)$. Therefore, $T^{-\vu}(t) = T^{\vu}(t)$. 
%In general, $T^{\ex}(t)$, $T^{\ey}(t)$ and $T^{\ez}(t)$ need not be equal; however, we will see in Section \ref{sec:differentDirections} that the three are equal if $t$ is a tiling of a cylinder.
Translating both dominoes by a vector with integer coordinates clearly does not affect $\tau^{\vu}(d_0,d_1)$, as $\det(\tv(d_1),\tv(d_0),\vu) = \det(-\tv(d_1),-\tv(d_0),\vu)$. Therefore, if $t$ is a tiling and $f(p) = p + b$, where $b \in \ZZ^3$, then $T^{\vu}(f(t)) = T^{\vu}(t)$.

\begin{lemma}
\label{lemma:twistReflection}
Let $\cR$ be a region, and let $\vw \in \Delta$. Consider the reflection $r = r_{\vw}: \RR^3 \to \RR^3: p \mapsto p - 2(p\cdot \vw)\vw$; notice that $r(\cR)$ is a region.
If $t$ is a tiling of $\cR$ and $\vu \in \Phi$, then the tiling $r(t) = \{r(d), d \in t\}$ of $r(\cR)$ satisfies $T^{\vu}(r(t)) = - T^{\vu}(t)$. 
\end{lemma}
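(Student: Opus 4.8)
The plan is to prove the statement by a termwise comparison: I will compare each summand $\tau^{\vu}(r(d_0),r(d_1))$ of $T^{\vu}(r(t))$ with a single summand of an appropriate pretwist of $t$, and then sum over all ordered pairs of dominoes. Note first that $r=r_{\vw}$ is a linear isometry with $r^2=\mathrm{id}$ which carries basic cubes to basic cubes and dominoes to dominoes, and which permutes $\Phi$ (it fixes the four elements orthogonal to $\vw$ and exchanges $\pm\vw$); in particular $\tau^{\vu}(r(d_0),r(d_1))$ makes sense for every $d_0,d_1\in t$, and $r(\vu)\in\Phi$. I will show
$$\tau^{\vu}(r(d_0),r(d_1)) = -\,\tau^{r(\vu)}(d_0,d_1) \qquad \text{for all } d_0,d_1\in t,$$
whence $T^{\vu}(r(t)) = -T^{r(\vu)}(t)$ by summation, and a final appeal to Lemma~\ref{lemma:twistNegatingDirection} replaces $r(\vu)$ by $\vu$.

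There are three ingredients. First, $r_{\vw}$ reverses the colour of every basic cube (the colour index $x+y+z$ changes by an odd integer under $p\mapsto p-2(p\cdot\vw)\vw$), so the black cube of $r(d)$ is the image under $r$ of the white cube of $d$ and vice versa; since $r$ is linear it commutes with taking centres, and therefore $\tv(r(d)) = -\,r(\tv(d))$. Second, since $r$ is a linear homeomorphism and $r^2=\mathrm{id}$, it intertwines shades: $\cS^{\vu}(r(X)) = r\big(\cS^{r(\vu)}(X)\big)$, so that $r(d_1)\cap\cS^{\vu}(r(d_0)) = r\big(d_1\cap\cS^{r(\vu)}(d_0)\big)$; in particular the support condition in the definition of $\tau^{\vu}(r(d_0),r(d_1))$ holds if and only if $d_1\cap\cS^{r(\vu)}(d_0)\neq\emptyset$, which is exactly the support condition for $\tau^{r(\vu)}(d_0,d_1)$. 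Third, on the determinant, substituting $\tv(r(d_i)) = -r(\tv(d_i))$ and writing $\vu = r(r(\vu))$,
$$\det\big(\tv(r(d_1)),\tv(r(d_0)),\vu\big) = \det(r)\,\det\big(\tv(d_1),\tv(d_0),r(\vu)\big) = -\det\big(\tv(d_1),\tv(d_0),r(\vu)\big),$$
the two sign changes contributed by the factors $-r(\tv(d_i))$ cancelling and the remaining $-1$ being $\det(r)$. Combining the second and third ingredients gives the claimed termwise identity (both sides vanish simultaneously when the support condition fails).

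Summing over all ordered pairs $d_0,d_1\in t$ yields $T^{\vu}(r(t)) = -T^{r(\vu)}(t)$. Now $r(\vu)$ equals $\vu$ when $\vu\perp\vw$ (nothing to prove) and equals $-\vu$ when $\vu=\pm\vw$, in which case Lemma~\ref{lemma:twistNegatingDirection} gives $T^{-\vu}(t)=T^{\vu}(t)$. In either case $T^{r(\vu)}(t)=T^{\vu}(t)$, so $T^{\vu}(r(t)) = -T^{\vu}(t)$, as desired.

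I do not anticipate a genuine obstacle: the argument is bookkeeping on how the three inputs of $\tau^{\vu}$ — the direction $\vu$, the orientation vectors $\tv(d_i)$, and the shade — transform under an orientation-reversing, colour-reversing involution. The one point deserving care is that $r$ must also be applied to the shade direction (it does not merely translate or rotate the dominoes in space), which is exactly why $r(\vu)$, rather than $\vu$, appears at the intermediate stage and forces the small case distinction resolved by Lemma~\ref{lemma:twistNegatingDirection}.
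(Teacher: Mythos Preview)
Your proof is correct and follows essentially the same route as the paper's: both establish the termwise identity $\tau^{\vu}(r(d_0),r(d_1)) = -\tau^{r(\vu)}(d_0,d_1)$ via the three ingredients $\tv(r(d)) = -r(\tv(d))$, $\cS^{\vu}(r(X)) = r(\cS^{r(\vu)}(X))$, and the determinant calculation using $\det(r)=-1$, then invoke Lemma~\ref{lemma:twistNegatingDirection} to pass from $r(\vu)$ back to $\vu$. Your write-up is slightly more explicit about the colour-reversal reason behind the first ingredient, but the argument is the same.
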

\begin{proof}
Given a domino $d$ of $t$, notice that $\tv(r(d)) = - r(\tv(d))$ and that $\cS^{\vu}(r(d)) = r(\cS^{r(\vu)}(d))$. Therefore, $r(d_1) \cap \cS^{\vu}(r(d_0)) \neq \emptyset \Leftrightarrow d_1 \cap S^{r(\vu)}(d_0) \neq \emptyset$ and
\begin{align*}
&\det(\tv(r(d_1)),\tv(r(d_0)),\vu) = \det(-r(\tv(d_1)), -r(\tv(d_0)), \vu)\\ 
&= \det(r(\tv(d_1)), r(\tv(d_0)), r(r(\vu))) = - \det(\tv(d_1), \tv(d_0), r(\vu)).  
\end{align*}
Therefore, $\tau^{\vu}(r(d_0),r(d_1)) = - \tau^{r(\vu)}(d_0,d_1)$ and thus $T^{\vu}(r(t)) = - T^{r(\vu)}(t)$. Since $r(\vu) = \pm \vu$, Lemma \ref{lemma:twistNegatingDirection} implies that $T^{\vu}(r(t)) = - T^{\vu}(t)$, completing the proof.
\end{proof}

 A natural question at this point concerns how the choice of $\vu$ affects $T^{\vu}$. It turns out that it will take us some preparation before we can tackle this question.

\begin{prop}
\label{prop:equalTwistsMultiplex}
If $\cR$ is a cylinder and $t$ is a tiling of $\cR$,
$$T^{\ex}(t) = T^{\ey}(t) = T^{\ez}(t) \in \ZZ.$$
\end{prop}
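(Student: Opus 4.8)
The plan is to prove the two assertions separately: first that the pretwists along the three coordinate axes coincide, and then that the common value is an integer. The key observation is that a cylinder $\cR$ has a distinguished axis $\tv$; since $\cR = \cD + [0,n]\tv$, the base $\cD$ is a simply connected planar region, and we may use this planar structure.

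First I would reduce to the case where the cylinder's axis is $\ez$, say $\cR = \cD \times [0,n]$ with $\cD \subset \RR^2$ simply connected with connected interior; by symmetry of the statement under the obvious relabelling of coordinates it suffices to prove $T^{\ez}(t) = T^{\ex}(t)$ and that $T^{\ez}(t) \in \ZZ$ (then applying the case-$\ez$ result to the other two presentations, using that the statement to be proved is symmetric in $\ex,\ey,\ez$). For the integrality of $T^{\ez}(t)$: group the dominoes of $t$ by floor. A domino not parallel to $\ez$ lies in a single floor $\cD \times [k,k+1]$; a domino parallel to $\ez$ contributes nothing to $T^{\ez}$ (it is parallel to $\vu = \ez$, so $\tau^{\ez}$ vanishes whenever it appears as $d_0$ or $d_1$). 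So $T^{\ez}(t) = \sum_{0 \le j < k < n} T_{jk}$, where $T_{jk}$ sums $\tau^{\ez}(d_0,d_1)$ over $d_0$ in floor $j$, $d_1$ in floor $k$ (the $\ez$-shade of a floor-$j$ domino meets floor $k$ only for $k > j$). I expect the main work is to show each $T_{jk} \in \ZZ$, or rather that an appropriate partial sum is. Fix floors $j < k$ and project everything orthogonally onto the plane $\RR^2$. Each non-$\ez$ domino $d$ projects to a planar domino $\bar d \subset \cD$, with an orientation vector $\tv(\bar d)$; and $d_1 \cap \cS^{\ez}(d_0) \neq \emptyset$ iff the planar dominoes $\bar d_0, \bar d_1$ overlap (share interior area). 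Since two planar dominoes that are not parallel and share interior must in fact share exactly one basic square, $T_{jk}$ becomes a sum over pairs of non-parallel planar dominoes sharing a square, of $\pm 1/4$. The claim is that this is the "twist" of a pair of planar tilings (of $\cD$), which by the two-dimensional theory (each floor's dominoes cover $\cD$ minus the "holes" left by $\ez$-dominoes passing through, so it is a tiling of a sub-planar-region) equals an integer — concretely, it should equal the number of "flips" needed to pass from one planar configuration to the other, or a difference of areas, and simple connectivity of $\cD$ is exactly what forces this quantity to be an integer rather than a quarter-integer.

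Next, for $T^{\ez}(t) = T^{\ex}(t)$: this is the harder direction and I expect it to be the main obstacle. The natural approach is a "local move" argument. By Proposition-level facts not yet available I cannot invoke flip/trit invariance of the twist itself, but I can use that the pretwists $T^{\vu}$ change in controlled ways under flips and trits, combined with a connectivity statement: any two tilings of $\cR = \cD \times [0,n]$ are connected by a sequence of flips and trits (this uses simple connectivity of $\cD$ essentially — it is the three-dimensional analogue of the two-dimensional connectivity-by-flips result, and should be citable or provable by induction on $n$). So it suffices to (a) exhibit one tiling $t_0$ of $\cR$ with $T^{\ez}(t_0) = T^{\ex}(t_0)$ — e.g. a tiling built entirely of $\ez$-dominoes when $n$ is even, or a "brick-wall" tiling, for which both pretwists are manifestly $0$ by a parity/symmetry argument using Lemma~\ref{lemma:twistReflection}; and (b) show that under a flip, $T^{\ez}$ and $T^{\ex}$ change by the same amount (indeed both are flip-invariant, which one proves by a direct local computation: a flip replaces two parallel dominoes inside a $2\times2\times1$ slab, and one checks the change in $\sum \tau^{\vu}(d_0,d_1)$ over all pairs involving the two affected dominoes is zero for every $\vu$), and under a trit, $T^{\ez}$ and $T^{\ex}$ change by the same amount (namely $\pm 1$ for a positive/negative trit, again a local computation in a $2\times2\times2$ box). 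Granting (a) and (b) and the connectivity statement, $T^{\ez} - T^{\ex}$ is constant along the connected-by-moves graph of tilings and vanishes at $t_0$, hence vanishes everywhere.

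The step I expect to be genuinely delicate is the flip-invariance computation done uniformly in $\vu$: one must account not only for pairs where both dominoes are among the two flipped ones, but for pairs where exactly one is flipped and the other is an arbitrary domino whose shade relations might change — here the fact that the two flipped dominoes occupy a $2\times2\times1$ slab and are replaced by the other pair filling the same slab means their union (and hence the set of shade-interactions with outside dominoes) is unchanged except for the internal pair, so the external contributions telescope; making this rigorous requires a careful but elementary case analysis on the orientation of the slab relative to $\vu$. The trit computation is similar but with three dominoes in a $2\times2\times2$ box. Once both moves are handled, the proof assembles as described.
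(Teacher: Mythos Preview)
Your proposal has a fatal gap: the connectivity step. You assume that any two tilings of a cylinder $\cR = \cD \times [0,n]$ are connected by flips and trits, calling this ``the three-dimensional analogue of the two-dimensional connectivity-by-flips result'' that ``should be citable or provable by induction on $n$''. This is false. The paper itself exhibits counterexamples in Section~\ref{sec:examples} (Figure~\ref{fig:notConnectedByLocalMoves}): there are cylinders, even of even depth, admitting tilings from which no flip or trit is possible and whose flip/trit connected component contains only one or two tilings. So your scheme of checking that $T^{\ez}-T^{\ex}$ is invariant under flips and trits and vanishes at one base tiling cannot reach all tilings, and the argument collapses. A secondary issue: your sketch of flip-invariance (``their union \ldots\ is unchanged except for the internal pair, so the external contributions telescope'') is not correct as stated---$\tau^{\vu}(d_0,d)$ depends on the individual domino $d_0$, not on the union of the flipped pair, and external contributions from dominoes straddling the boundary of the shade do \emph{not} telescope automatically; the paper's Proposition~\ref{prop:flipsAndTrits} shows they sum to zero only by invoking the \emph{fully balanced} property of the region.

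The paper's route is entirely different and does not rely on connectivity. It passes through knot theory: for an even-depth $\vw$-cylinder one overlays $t$ with the all-$\vw$ reference tiling $\tbasew$, obtaining a link $\Gamma^*(t,\tbasew)$, and expresses $T^{\vw}(t)$ via directional writhes and linking numbers (Proposition~\ref{prop:writheFormula}); this both gives integrality and, because writhes are direction-independent in the relevant sense (Lemma~\ref{lemma:xEffectsEqualsTwist}), gives equality of pretwists after embedding the cylinder in a box (Lemma~\ref{lemma:allPretwistsBoxes}, Proposition~\ref{prop:equalTwistsIntNEven})---the simple connectivity of $\cD$ enters only as a winding-number-zero statement (Lemma~\ref{lemma:alternativeFluxFormula}). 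Odd depth is then handled by a doubling trick plus an explicit tree-based construction of a single tiling with integer pretwist (Lemmas~\ref{lemma:equalTwistsNOdd}--\ref{lemma:treeConstruction}). Your floor-by-floor integrality idea, incidentally, is also not substantiated: the planar pieces you get on each floor are tilings of $\cD$ minus holes, which need not be simply connected, and there is no ``two-dimensional theory'' that makes each $T_{jk}$ individually an integer.
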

\begin{proof}
Follows directly from Propositions \ref{prop:equalTwistsIntNEven} and \ref{prop:equalTwistsIntNOdd} below.
\end{proof}
This result doesn't hold in pseudocylinders or in more general simply connected regions; see Section \ref{sec:examples} for counterexamples.

%Even though we postpone the proof to Section \ref{sec:differentDirections}, we use this result (only) for the following definition.
%When there is no possibility of confusion, the term \emph{effect of} $d_0$ \emph{on} $d_1$ will be used to the denote the effect along $\ez$; also, we shall write $\tau$ to denote $\tau^{\ez}$.

%As a last comment regarding shades, it follows from the definition of $\tau$ that, given a dimer $d$, the dominoes that may be affected by $d$ are the ones that intersect the $\ez$-shade of $d$. However, it is just as easy to determine the dominoes that may have an effect on $d$: they are the ones that intersect the $-z$-\emph{shade} $S_{-z}(d)$, that is, the set of all cubes that have same $x$ and $y$ coordinates as one of the cubes that form $d$, but smaller $\ez$ coordinate. Moreover, if the effects had been defined in terms of $-z$-shades with an analogous formula, we would have $\tau_{-z}(d_1,d_0) = \tau(d_0,d_1)$ for every $d_0,d_1 \in t$, since $d_1 \cap \cS^{\ez}(d_0) \neq \emptyset \Leftrightarrow d_0 \cap S_{-z}(d_1) \neq \emptyset$ and $(0,0,1) \cdot (\tv(d_1) \times \tv(d_0)) = (0,0,-1) \cdot (\tv(d_0) \times \tv(d_1))$.

%{ \color[rgb]{0,0,1} WHICH CLASS OF REGIONS RECEIVES THIS DEFINITION OF TWIST? }

\begin{defin}
\label{def:twist}
For a tiling $t$ of a cylinder $\cR$, we define the \emph{twist} $\Tw(t)$ as
\begin{equation*}
\Tw(t) = T^{\ex}(t) = T^{\ey}(t) = T^{\ez}(t).
%\label{eq:twistDef}
\end{equation*} 
\end{defin}

Until Section \ref{sec:differentDirections}, we will not use Proposition \ref{prop:equalTwistsMultiplex}, and will only refer to pretwists.

Let $\vu \in \Delta$, and let $\beta = (\vbeta_1,\vbeta_2,\vbeta_3) \in \bB$ be such that $\vbeta_3 = \vu$.
A region $\cR$ is said to be \emph{fully balanced with respect to} $\vu$ if for each square $Q = p + [0,2]\vbeta_1 + [0,2] \vbeta_2$, where $p \in \ZZ^3$ and $Q \subset \cR$, each of the two sets $\cA^{\vu} = \cR \cap \clS^{\vu}(Q)$ and $\cA^{-\vu} = \cR \cap \clS^{-\vu}(Q)$  contains as many black cubes as white ones. In other words,
$$ \sum_{C \subset \cA^{\vu}} \ccol(C) =  \sum_{C \subset \cA^{-\vu}} \ccol(C) = 0.$$ 
$\cR$ is \emph{fully balanced} if it is fully balanced with respect to each $\vu \in \Delta$.
% ---- Old text:
%Let $\vu \in \Delta$, and consider squares $Q \subset \pi$ of side $2$ and vertices in $\ZZ^3$, where $\pi \perp \vu$ is a basic plane. A region $\cR$ is said to be \emph{fully balanced with respect to} $\vu$ if for each such square $Q$ that is contained in $\cR$, each of the two sets $\cA^{\vu} = \cR \cap \clS^{\vu}(Q)$ and $\cA^{-\vu} = \cR \cap \clS^{-\vu}(Q)$  contains as many black cubes as white ones. 

\begin{lemma}
\label{lemma:fullyBalancedMultiplex}
Every pseudocylinder (in particular, every cylinder) is fully balanced.
\end{lemma}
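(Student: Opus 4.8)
All the notions involved (regions, basic cubes, colors, shades, ``fully balanced'') are invariant under the linear maps permuting the coordinate axes, so we may assume the axis of the (pseudo)cylinder is $\ez$; write $\cR = \cD \times [0,n]$ with $\cD \subset \RR^2$ a planar region with connected interior. We must show $\cR$ is fully balanced with respect to each $\vu \in \Delta$, i.e.\ that for every $2\times 2$ square $Q = p + [0,2]\vbeta_1 + [0,2]\vbeta_2 \subset \cR$ (with $p \in \ZZ^3$ and $\vbeta_3 = \vu$) we have $\sum_{C \subset \cA^{\vu}} \ccol(C) = \sum_{C \subset \cA^{-\vu}} \ccol(C) = 0$, where $\cA^{\pm\vu} = \cR \cap \clS^{\pm\vu}(Q)$.

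The plan is a pairing argument along an auxiliary direction $\vec{e} \in \Delta$, chosen as $\vec{e} = \ex$ when $\vu = \ez$ and $\vec{e} = \ez$ when $\vu \in \{\ex,\ey\}$. The key claim is that a basic cube $C$ satisfies $C \subset \cA^{\pm\vu}$ if and only if (a) the $\vec{e}$-coordinate of the center of $C$ is one of two consecutive half-integers $\plshalf{j}, \plshalf{(j+1)}$ determined by $Q$, and (b) a condition on the other two center-coordinates of $C$ that does \emph{not} refer to the $\vec{e}$-coordinate. Part (a) is immediate from the shape of $\clS^{\pm\vu}(Q)$: since $Q$ is a $2\times 2$ square, $\clS^{\pm\vu}(Q)$ confines two of the three coordinates to a window of width $2$, and $\vec{e}$ is one of those two. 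The substance is part (b), namely that the membership condition in $\cR = \cD\times[0,n]$ for the remaining coordinates is unaffected by which of the two $\vec{e}$-values occurs. When $\vu = \ez$, $\vec{e} = \ex$: here $Q \subset \cR$ forces the entire $2\times 2$ base of $Q$ to lie in $\cD$, so both $\ex$-columns meeting $\clS^{\pm\ez}(Q)$ lie over $\cD$, and the surviving constraint on the $\ey$- and $\ez$-coordinates (square over $\cD$, and $\ez$ in the appropriate sub-interval of $[0,n-1]$) is independent of the $\ex$-coordinate. When $\vu \in \{\ex,\ey\}$, $\vec{e} = \ez$: here $Q \subset \cR$ forces the width-$2$ interval of $z$-values spanned by $Q$ to lie inside $[0,n]$, so both $\ez$-layers meeting $\clS^{\pm\vu}(Q)$ survive intersection with $\cR$, and the surviving constraint (the $xy$-square lying in $\cD$, plus a shade condition on the coordinate along $\vu$) does not involve the $\ez$-coordinate.

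Granting the claim, translation by $\vec{e}$ is a bijection between the cubes of $\cA^{\pm\vu}$ with $\vec{e}$-coordinate $\plshalf{j}$ and those with $\vec{e}$-coordinate $\plshalf{(j+1)}$; since a cube and its image differ by a unit vector, they have opposite colors. Hence the basic cubes contained in $\cA^{\pm\vu}$ are partitioned into pairs of oppositely colored cubes, and the signed sum vanishes. This proves full balance with respect to every $\vu \in \Delta$, hence $\cR$ is fully balanced. Note that simple connectivity of $\cD$ is never invoked; only the product structure $\cR = \cD\times[0,n]\tv$ is used, which is exactly why the statement holds for pseudocylinders.

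The routine part is writing $\clS^{\pm\vu}(Q)\cap\cR$ out in coordinates and checking (a)--(b) in the two cases ($\vu$ parallel versus perpendicular to the axis). I expect the only genuine subtlety is verifying the two containments forced by $Q \subset \cR$: that a $2\times 2$ square in $\cR$ perpendicular to the axis has its base inside $\cD$, and that one perpendicular to the axis direction has its two-unit extent along the axis inside $[0,n]$ (so that no cube in the relevant layer is lost); once these are in hand, the pairing is forced.
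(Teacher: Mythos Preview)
Your proposal is correct and follows essentially the same route as the paper's proof. Both arguments split into the cases $\vu$ parallel versus perpendicular to the axis and exploit the fact that $\cA^{\pm\vu}$ factors as (planar set) $\times$ (interval of length $2$), which forces the color sum to vanish by pairing cubes along the length-$2$ direction. The paper phrases this a bit more tersely---in the axis-parallel case it simply identifies $\cA^{\vu}$ as a $2\times 2 \times (n-k)$ box and counts, and in the perpendicular case it writes $\cA^{\pm\vu} = \cD^{\pm} + [k,k+2]\vbeta_1$---whereas you make the pairing direction $\vec{e}$ explicit throughout; but the substance is identical. Your remark that simple connectivity is never used is exactly the point of stating the lemma for pseudocylinders.
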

\begin{proof}
Let $\cR$ be a pseudocylinder with base $\cD$ and depth $n$, let $\vu \in \Delta$ and let $Q = p_0 + [0,2]\vbeta_1 + [0,2] \vbeta_2 \subset \cR$, where $\beta \in \bB$ is such that $\vbeta_3 = \vu$ and $p_0 \in \ZZ^3$.
Consider $\cA^{\pm\vu} = \cR \cap \clS^{\pm\vu}(Q)$.
%$\pi \perp \vu$ a basic plane and $Q \subset \pi \cap \cR$ a square of side $2$ and vertices in $\ZZ^3$. 

If $\vu$ is the axis of the pseudocylinder, then $Q = Q' + k\vu$, for some square $Q' \subset \cD$ and some $0 \leq k \leq n$. Now $\cA^{\vu} = Q' + [k,n]\vu$, which clearly contains $2(n-k)$ black cubes and $2(n-k)$ white ones; similarly, $\cA^{-\vu} = Q' + [0,k]\vu$ contains $2k$ black cubes and $2k$ white ones.

If $\vu$ is perpendicular to the axis of the pseudocylinder, assume without loss of generality that $\vbeta_1$ is the axis. Let $\Pi$ denote the orthogonal projection on $\cD$, and let $\cD^{\pm} = \clS^{\pm\vu}(\Pi(Q)) \cap \cD$, which are planar regions, since they are unions of squares of $\cD$. If $p_0 - \Pi(p_0) = k \vbeta_1$, we have $\cA^{\pm\vu} = \cD^{\pm} + [k,k+2]\vbeta_1$, which clearly has the same number of black squares as white ones. 
%
%\{p \in \cD |(p - p_0) \cdot \vbeta_2 \in [0,2]\}$ which is a planar region,since it is a union of squares of $\cD$. Finally, set $k = p_0 \cdot \vbeta_1$, $\cD_0^{+} = \{p \in \cD_0 | (p - p_0) \cdot \vu \geq 0\}$, and $\cD_0^{-} = \{p \in \cD_0 | (p - p_0) \cdot \vu \leq 0\}$, so that $\cA^{\pm\vu} = \cD_0^{\pm} + [k,k+2]\vbeta_1$, which clearly has the same number of black squares as white ones.  
%
 %which we may assume without loss of generality to have axis $\ez$, so that $\cR = \cD \times [0,n]$ for some $\cD$. Take $\vu \in \Phi$ and a square $Q$ of side $2$ with vertices in $\ZZ^3$ and normal vector $\vu$.
%
%If $\vu = \ez$, then $Q \subset \cD \times \{k\}$ for some $0 \leq k \leq n$, so that $\cR \cap \overline{\cS^{\vu}(Q)} = Q \times [k,n]$, which clearly contains $2(n-k)$ black cubes and $2(n-k)$ white ones; a similar argument shows that $\cR$ is also fully balanced with respect to $-\ez$.
%
%Othewise, suppose $\vu = \ex$. It is easy to see that $\cR \cap \overline{\cS^{\vu}(Q)} = \cD_0 \times [k,k+2]$ for some $\cD_0 \subset \cD$ and some $0 \leq k \leq n - 2$; therefore, $\cR$ is fully balanced with respect to $\ex$. By symmetry, this holds for $\vu \in \{\pm \ex, \pm \ey\}$, and thus we're done.
\end{proof}
%We now need to show that the twist is a flip invariant. For what follows, we will fix $\ez$ as our main direction along which we calculate effects. We also write $\tau = \tau^{\ez}$ and effect will mean effect along $\ez$.

%To simplify the notation in the following two proofs, consider the following objects: given a domino $d^\ast$ of a tiling $t$, we set $\Tout(d^\ast) = \sum_{d \in t} \tau(d^\ast,d)$, the sum of the effects (along $\ez$) of $d^\ast$ on all the other dominoes of $t$  ($\tau(d,d) = 0$ for any domino), $\Tin(d^\ast) = \sum_{d \in t} \tau(d,d^\ast) = \sum_{d \in t} \tau^{-\ez}(d^\ast,d)$, the sum of the effects of all the other dominoes on $d^\ast$, and $T(d^\ast) = \Tout(d^\ast) + \Tin(d^\ast)$, the total effect of $d^\ast$. Notice that $$\Tw(t) = \sum_{d \in t} \Tin(d) = \sum_{d \in t} \Tout(d) = \half \sum_{d \in t} T(d)  $$

%{\color[rgb]{0,0,1} I believe this notation is conflicting with other notations, and might actually not be that helpful. Maybe it's better to remove this paragraph and use the $E_{\text{out}}$ notation from the proof of Proposition \ref{prop:tritDifference}.}

\begin{prop}
\label{prop:flipsAndTrits}
Let $\cR$ be a region that is fully balanced with respect to $\vu \in \Phi$.
\begin{enumerate}[label=\upshape(\roman*),topsep = 0.1px]
	\item \label{item:flips} If a tiling $t_1$ of $\cR$ is reached from $t_0$ after a flip, then $T^{\vu}(t_0) = T^{\vu}(t_1)$
	\item \label{item:trits} If a tiling $t_1$ of $\cR$ is reached from $t_0$ after a single positive trit, then $T^{\vu}(t_1) = T^{\vu}(t_0) + 1$.
\end{enumerate}
\end{prop}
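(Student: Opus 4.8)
The plan is to compute $T^{\vu}(t_1)-T^{\vu}(t_0)$ directly and reduce it, via the fully balanced hypothesis, to an explicit finite quantity. By translation invariance of $T^{\vu}$ (the remark after Lemma~\ref{lemma:twistNegatingDirection}) and of the sign of a trit (Section~\ref{sec:defsAndNotations}), we place the $2\times2\times1$ slab of the flip, respectively the $2\times2\times2$ cube of the trit, conveniently. Write $t^{*}=t_0\cap t_1$ for the common dominoes, so $t_0=t^{*}\sqcup B_0$ and $t_1=t^{*}\sqcup B_1$ with $B_0,B_1$ the removed and added dominoes. Expanding the defining double sum bilinearly, $T^{\vu}(t_1)-T^{\vu}(t_0)=\mathrm{Int}+\mathrm{Cr}$, where $\mathrm{Int}=\sum_{x,y\in B_1}\tau^{\vu}(x,y)-\sum_{x,y\in B_0}\tau^{\vu}(x,y)$ and $\mathrm{Cr}=\sum_{c\in t^{*}}\sum_{x\in B_1}\bigl(\tau^{\vu}(c,x)+\tau^{\vu}(x,c)\bigr)-\sum_{c\in t^{*}}\sum_{x\in B_0}\bigl(\tau^{\vu}(c,x)+\tau^{\vu}(x,c)\bigr)$. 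Since $\tau^{\vu}$ vanishes on a domino paired with itself and on parallel pairs, $\mathrm{Int}=0$ for a flip (there $B_0,B_1$ are pairs of parallel dominoes); for a trit, $\mathrm{Int}$ is a finite number to be computed at the end.

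\textbf{Localizing the cross term.} Fix $c\in t^{*}$. By Lemma~\ref{lemma:twistNegatingDirection} we may rewrite $\tau^{\vu}(c,x)=\tau^{-\vu}(x,c)$, so every summand of $\mathrm{Cr}$ has the form $\tau^{\pm\vu}(x,c)=\tfrac14\det\bigl(\tv(c),\tv(x),\pm\vu\bigr)\,\mathbf 1\bigl[c\cap\cS^{\pm\vu}(x)\neq\emptyset\bigr]$ with $x$ a removed/added domino. This is nonzero only if $c$ meets the $\pm\vu$-shade of $x$, hence only if $c$ has a cube in the closed $\pm\vu$-shade of a $2\times2$ cross-section $Q$ of the slab/cube perpendicular to $\vu$, i.e.\ a cube in $\cA^{\vu}$ or $\cA^{-\vu}$; and $c$ has no cube in the slab/cube itself, that region being tiled by $B_0$, resp.\ $B_1$. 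So only dominoes of $t^{*}$ meeting $\cA^{\vu}\cup\cA^{-\vu}$ matter, and each such $c$ either is contained in $\cA^{\pm\vu}$ or has exactly one cube there, its other cube crossing the lateral boundary of the $2\times2$ footprint column.

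\textbf{Flips.} Take $Q$ to be a $2\times2$ face of the slab perpendicular to $\vu$ (if $\vu$ is perpendicular to the thin direction of the slab, see below). The two dominoes of $B_0$ are mirror images across the centre of the slab, so their $\tv$'s sum to $0$, and likewise for $B_1$. If $c\in t^{*}$ is contained in $\cA^{\vu}$, then either $c\parallel\vu$ (all its $\tau$ vanish) or $c$ is parallel to both members of one of $B_0,B_1$ and non-parallel to both members of the other; in the non-parallel case $c$ meets the $\vu$-shades of \emph{both} non-parallel members, so by linearity of $\det$ in its middle entry their combined contribution is $\tfrac14\det\bigl(\tv(c),\tv(x)+\tv(x'),\vu\bigr)=0$. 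Hence dominoes contained in $\cA^{\vu}$ (and identically in $\cA^{-\vu}$) contribute nothing. If instead $c$ has exactly one cube $C_{c}$ in $\cA^{\vu}$, then $c$ is non-parallel to exactly one slab domino, and a direct evaluation of the single surviving determinant — tracking the colour of $C_{c}$, the orientation of $\tv(c)$, and the fixed position of the slab — gives $c$'s contribution as $\tfrac14\,\varepsilon\,\ccol(C_{c})$ with $\varepsilon\in\{+1,-1\}$ depending only on the slab; one checks that the four cases (according to which lateral face $c$ crosses) share the same $\varepsilon$. Summing over all such $c$ on the $+\vu$ side, the cubes $C_{c}$ run exactly through those cubes of $\cA^{\vu}$ not covered by a domino contained in $\cA^{\vu}$, the latter cubes occurring in opposite-coloured pairs; so $\sum_{c}\ccol(C_{c})=\sum_{C\subset\cA^{\vu}}\ccol(C)=0$ by the fully balanced hypothesis, and the $-\vu$ side is the same. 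Thus $\mathrm{Cr}=0$, and with $\mathrm{Int}=0$ this proves \ref{item:flips}. When $\vu$ is perpendicular to the thin direction of the slab, one of $t_0,t_1$ tiles the slab by two dominoes parallel to $\vu$ (which contribute nothing to $T^{\vu}$); for the other, the two $\vu$-shades are nested, their difference being exactly the cells occupied by one slab domino, which no domino of $t^{*}$ can meet, so $\mathrm{Cr}=0$ with no appeal to full balance.

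\textbf{Trits, and the main obstacle.} For a trit the same bookkeeping applies, except that $B_0$ and $B_1$ now each consist of one domino parallel to $\vu$ (contributing nothing) together with two non-parallel ones whose $\tv$'s do \emph{not} sum to zero; so $\mathrm{Int}\neq0$ in general and the ``contained dominoes'' step must be redone using the three $2\times2$ cross-sections of the cube perpendicular to $\vu$ (the middle one always lies in $\cR$ when $\cR$ contains the cube, since the two holes sit at antipodal corners). The full-balance identities $\sum_{C\subset\cA^{\pm\vu}}\ccol(C)=0$ for these cross-sections reduce $\mathrm{Cr}$ to a quantity depending only on the local picture of Figure~\ref{fig:postrit}, not on the far-away tiling $t^{*}$; adding $\mathrm{Int}$, a direct computation on the six-, seven-, or eight-cube configuration — legitimate because $\tau$ there only ever involves the trit's own three dominoes — yields $T^{\vu}(t_1)-T^{\vu}(t_0)=1$ for a positive trit, proving \ref{item:trits}. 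The delicate step throughout is the straddling-domino computation: verifying that all configurations produce the uniform normalized value $\tfrac14\varepsilon\,\ccol(C_{c})$, i.e.\ keeping the sign choices (orientation of $\tv$, colour of the inner cube, parity of the slab/cube position) consistent across the cases. The localization itself and the concluding finite evaluations are routine.
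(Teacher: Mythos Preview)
Your approach is the same as the paper's: expand $T^{\vu}(t_1)-T^{\vu}(t_0)$ bilinearly into an internal term and a cross term, localize the cross term to the two columns $\cA^{\pm\vu}$, observe that contained dominoes contribute nothing, and kill the straddling contributions via the color identity $\sum_{C\subset\cA^{\pm\vu}}\ccol(C)=0$ coming from the fully balanced hypothesis.

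For flips, your argument matches the paper. One wording slip: a straddling domino $c$ is in fact non-parallel to \emph{two} slab dominoes (the entire pair $B_0$ or the entire pair $B_1$), but its single cube in $\cA^{\vu}$ lies in the shade of only one of them, which is why exactly one term survives. The uniform-$\varepsilon$ check you flag as delicate is exactly what the paper verifies by inspection of Figure~\ref{fig:flip_shadow_c2_scheme}.

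For trits, your sketch has the right shape but some details are off. The paper does not use three cross-sections; it uses a single $2\times2$ square $Q$ (essentially your ``middle'' cross-section, which indeed lies in $\cR$ regardless of the holes). The region where the straddling analysis is carried out is $\cA^{\vu}=\clS^{\vu}(D)\cap\cR$ with $D=d_0\cup\tilde d_0\cup d_1\cup\tilde d_1$, and this differs from $\clS^{\vu}(Q)\cap\cR$ by exactly three cubes (two white, one black in the paper's normalization), so full balance gives $\sum_{C\subset\cA^{\vu}}\ccol(C)=1$, not $0$. The paper then computes $\mathrm{Int}=\tfrac14-(-\tfrac14)=\tfrac12$ and each of the two cross sums $\sum E^{\pm\vu}=\tfrac14$, totalling $1$. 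Your phrase ``reduce $\mathrm{Cr}$ to a local quantity and then compute'' is correct in spirit, but the actual reduction requires this three-cube correction, and the final finite evaluation---which you defer---is precisely where the sign of the trit enters and must be checked against Figure~\ref{fig:postrit_diff}.
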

\begin{proof}
In this proof, $\vu$ points towards the paper in all the drawings.
We begin by proving \ref{item:flips}.
Suppose a flip takes the dominoes $d_0$ and $\tilde{d_0}$ in $t_0$ to $d_1$ and $\tilde{d_1}$ in $t_1$. 
Notice that $\tv(d_0) = -\tv(\tilde{d_0})$ and $\tv(d_1) = - \tv(\tilde{d_1})$. 
For each domino $d \in t_0 \cap t_1$, define 
$$E^{\pm\vu}(d) = \tau^{\pm\vu}(d,d_1) + \tau^{\pm\vu}(d,\tilde{d_1}) - \tau^{\pm\vu}(d,d_0) - \tau^{\pm\vu}(d,\tilde{d_0}).$$ %and $E^{-\vu}(d) = \tau^{-\vu}(d,d_1) + \tau^{-\vu}(d,\tilde{d_1}) - \tau^{-\vu}(d,d_0) - \tau^{-\vu}(d,\tilde{d_0})$. 
Notice that
$$T^{\vu}(t_1) - T^{\vu}(t_0) = \sum_{d \in t_0 \cap t_1} {E^{\vu}(d) + E^{-\vu}(d)}.$$

%We may assume without loss of generality (by switching the roles of $t_0$ and $t_1$ if necessary) that $d_0$ is not parallel to $\ez$. 

\begin{case}
\label{case:d1parz}
Either $d_0$ or $d_1$ is parallel to $\vu$.
\end{case}
\begin{figure}[ht]%
\centering
\includegraphics[width=\columnwidth]{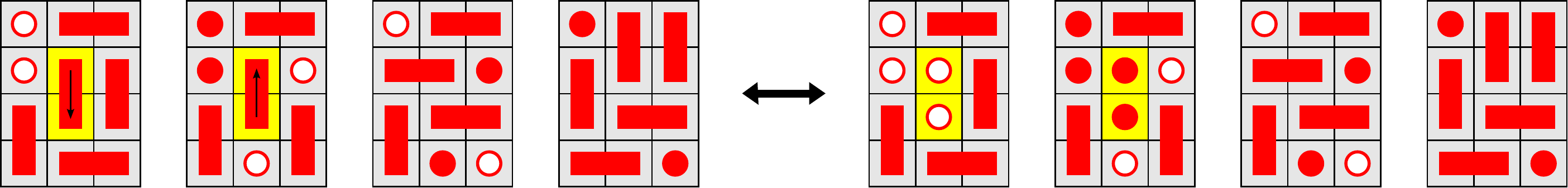}%
\caption{An example of Case \ref{case:d1parz}, where the black arrows represent $\tv(d_0)$ and $\tv(\tilde{d_0})$. It is clear that the effects of $d_0$ and $\tilde{d_0}$ cancel out.}%
\label{fig:flip_shadow_c1}%
\end{figure}
Assume, without loss of generality, that $d_1$ (and thus also $\tilde{d_1}$) is parallel to $\vu$.
By definition, $\tau^{\pm\vu}(d,d_1) = \tau^{\pm\vu}(d,\tilde{d_1}) = 0$ for each domino $d$. Now notice that $d_0$ and $\tilde{d_0}$ are parallel and in adjacent floors (see Figure \ref{fig:flip_shadow_c1}) : since $\tv(d_0) = -\tv(\tilde{d_0})$, it follows that $\tau^{\pm\vu}(d,d_0) + \tau^{\pm\vu}(d,\tilde{d_1}) = 0$ for each domino $d$, so that $E^{\pm\vu}(d) = 0$ and thus $T^{\vu}(t_1) = T^{\vu}(t_0)$. 
%
%; say $d_0$ is in the floor with smallest $z$ coordinate. Clearly (see Figure \ref{fig:flip_shadow_c1}) $\cS^{\ez}(d_0) = \cS^{\ez}(\tilde{d_0}) \cup \tilde{d_0}$, so that $\tau(d,d_0) = -\tau(d,\tilde{d_0})$ for any $d \in t_0$, and $\Tout(d_0) + \Tout(\tilde{d_0}) = 0$. A similar observation shows that $\Tin(d_0) + \Tin(\tilde{d_0}) = 0$, and hence $T(d_0) + T(\tilde{d_0}) = 0 = T(d_1) + T(\tilde{d_1})$. 
%
%In this case, clearly $d_1$ and $\tilde{d_1}$ have no effect on any domino and are not affected by any domino in $t_1$, hence we need to show that $T(d_0) + T(\tilde{d_0}) = 0$.
%
%It is clear that $d_0$ and $\tilde{d_0}$ are parallel and in adjacent floors (in the same positions in both floors). Also, $\tau(d_0,\tilde{d_0}) = \tau(\tilde{d_0}, d_0) = 0$ because they are parallel. Since they intersect the $\ez$-shadows of the exact same dominoes and they point in opposite directions, it follows that $\tau(d,d_0) = -\tau(d,\tilde{d_0})$ for any $d \in t_0$. Also, since their $\ez$-shadows intersect the exact same dominoes (other than $d_0$ or $\tilde{d_0}$), $\tau(d_0,d) = -\tau(\tilde{d_0},d)$ for any $d \in t_0$, hence it follows that the total effects of $d_0$ and $\tilde{d_0}$ cancel out, and $\Tw(t_0) = \Tw(t_1)$.

\begin{case}
\label{case:d1notparz}
Neither $d_0$ nor $d_1$ is parallel to $\vu$.
\end{case}
\begin{figure}[ht]%
\centering
\subfloat[The flip position is highlighted in yellow in both tilings, and $\cA^{\vu}$ is highlighted in green. The vectors $\tv(d)$ have been drawn for the most relevant dominoes.]{\includegraphics[width=0.4\columnwidth]{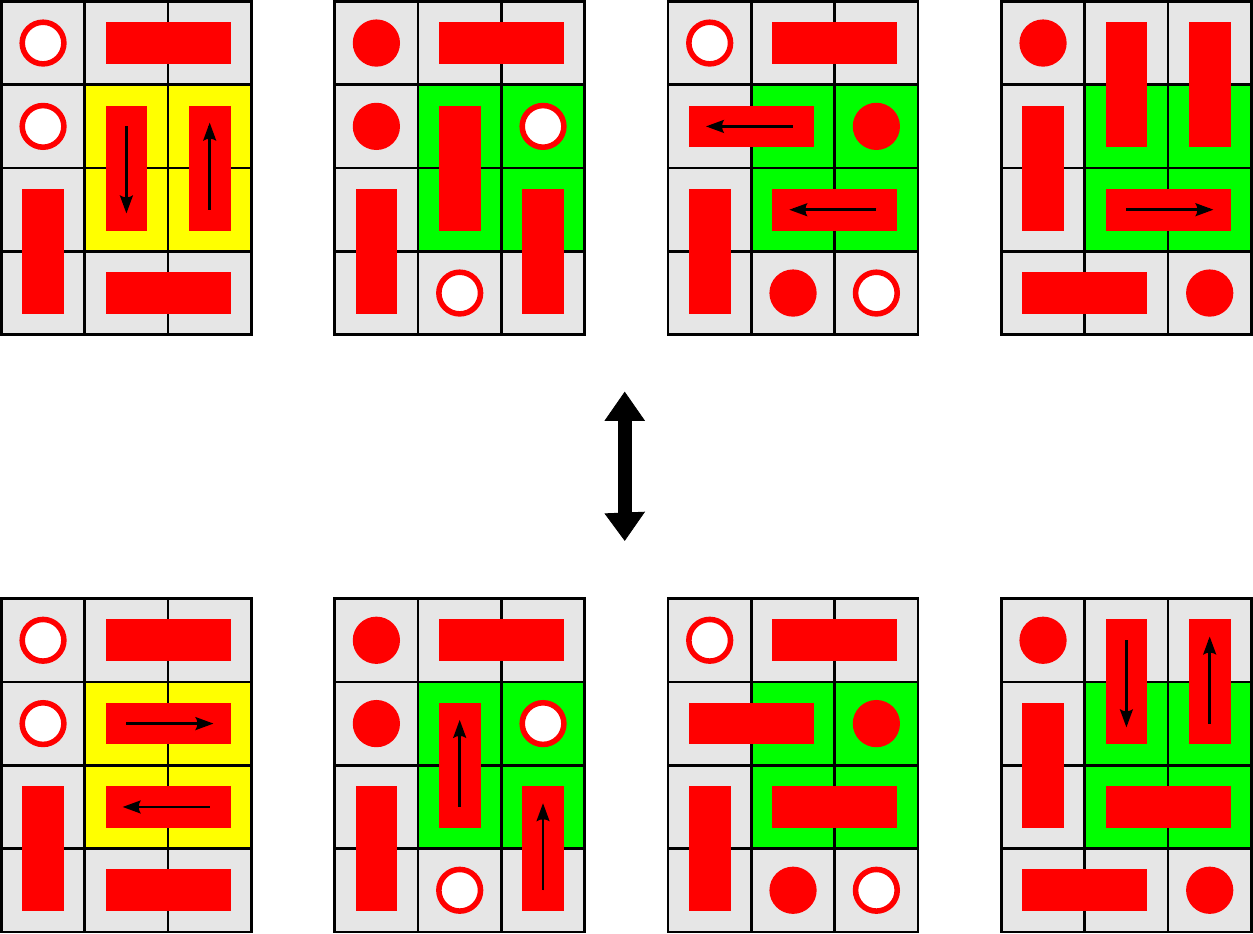}\label{fig:flip_shadow_c2}} \qquad \qquad
\subfloat[This refers to the tilings in \protect\subref{fig:flip_shadow_c2}, but only the arrows are drawn (not the dominoes). Notice that we have drawn $-\tv(d_0)$ and $-\tv(\tilde{d_0})$.]{\def\svgwidth{0.4\columnwidth}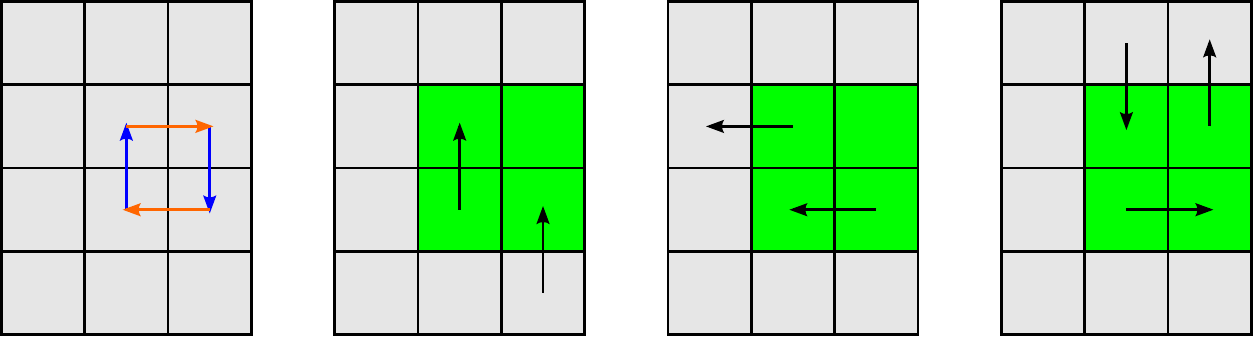\label{fig:flip_shadow_c2_scheme}}
\caption{Example of a flip in Case \ref{case:d1notparz}, together with a schematic drawing portraying $\tv(d)$ for the relevant dominoes.}
\label{fig:flip_shadow_c2_complete}
\end{figure}

In this case, $d_0 \cup \tilde{d_0} = d_1 \cup \tilde{d_1} = Q + [0,1]\vu \subset \cR$ for some square $Q$ of side $2$ and normal vector $\vu$. 

Notice that $\clS^{\vu}(d_0) \cup \clS^{\vu}(\tilde{d_0}) = \clS^{\vu}(d_1) \cup \clS^{\vu}(\tilde{d_1}) = \clS^{\vu}(Q + \vu)$; let $\cA^{\vu} = \cR \cap \clS^{\vu}(Q + \vu)$. 
%Since $\cR$ is fully balanced, the number of black cubes in $\Aout$ equals the number of white ones.% = (\cS^{\ez}(d_0) \cup \cS^{\ez}(\tilde{d_0})) \cap \cR$.

Let $d$ be a domino that is completely contained in $\Aout$: we claim that $\tau(d_0,d) + \tau(\tilde{d_0},d) = 0 = \tau(d_1,d) + \tau(\tilde{d_1},d)$. This is obvious if $d$ is parallel to $\vu$; if not, we can switch the roles of $t_0$ and $t_1$ if necessary and assume that $d$ is parallel to $d_0$, which implies that $\tau(d_0,d) = \tau(\tilde{d_0},d) = 0$. Now notice that $d$ is in the $\vu$-shades of both $d_1$ and $\tilde{d_1}$, so that $\tau(d_1,d) = - \tau(\tilde{d_1},d)$. Hence, if $d \subset \cA^{\vu}$ (or if $d \cap \cA^{\vu} = \emptyset$), $E^{-\vu}(d) = 0$.

For dominoes $d$ that intersect $\Aout$ but are not contained in it, first observe that by switching the roles of $t_0$ and $t_1$ and switching the colors of the cubes (i.e., translating) if necessary, we may assume that the vectors are as shown in Figure \ref{fig:flip_shadow_c2}. By looking at Figure \ref{fig:flip_shadow_c2_scheme} and working out the possible cases, we see that
$$
E^{-\vu}(d) = 
\begin{cases}
 -\frac{1}{4}, &\text{if } \tv(d) \text{ points into } \Aout; \\
\frac{1}{4}, &\text{if } \tv(d) \text{ points away from } \Aout.\end{cases}
 $$  
%is $\pm 1/4$ if $\tv(d)$ points away from $\Aout$, and $\mp 1/4$ if it points into $\Aout$ (in other words, they have opposite signs), as it's clear from Figure \ref{fig:flip_shadow_c2_complete}(\protect\subref{fig:flip_shadow_c2} and \protect\subref{fig:flip_shadow_c2_scheme}). 

Now for such dominoes, $\tv(d)$ points away from the region if and only if $d$ intersects a white cube of $\Aout$, and points into the region if and only if $d$ intersects a black cube in $\Aout$: hence,
$$\sum_{d \in t_0 \cap t_1} E^{-\vu}(d) = \sum_{C \subset \cA^{\vu}} (-\ccol(C)) = 0,$$ 
because $\cR$ is fully balanced with respect to $\vu$.
A completely symmetrical argument shows that $\sum_{d \in t_0 \cap t_1} E^{\vu}(d) = 0$, so we are done.

We now prove \ref{item:trits}. Suppose $t_1$ is reached from $t_0$ after a single positive trit. By rotating $t_0$ and $t_1$ in the plane $\vu^{\perp} = \{ \vw | \vw \cdot \vu = 0\}$ (notice that this does not change $T^{\vu}$), we may assume without loss of generality that the dominoes involved in the positive trit are as shown in Figure \ref{fig:postrit}. Moreover, by translating if necessary, we may assume that the vectors $\tv(d)$ are as shown in Figure \ref{fig:postrit_diff}.

A trit involves three dominoes, no two of them parallel. Since dominoes parallel to $\vu$ have no effect along $\vu$, we consider only the four dominoes involved in the trit that are not parallel to $\vu$: $d_0, \tilde{d_0} \in t_0$, and $d_1, \tilde{d_1} \in t_1$. Define $E^{\pm\vu}$ with the same formulas as before.  
%Let $d_0$ and $\tilde{d_0}$ be the two non-$\ez$ dominoes of $t_0$ involved in the trit, and $d_1$ and $\tilde{d_1}$ the ones in $t_1$. We wish to show that $\Tw(t_1) - \Tw(t_0) = T(d_1) + T(\tilde{d_1}) - T(d_0) - T(\tilde{d_0}) = 1$.

By looking at Figure \ref{fig:postrit}, the reader will see that $\tau(d_0,\tilde{d_0}) + \tau(\tilde{d_0},d_0) = -1/4$ and $\tau(d_1,\tilde{d_1}) + \tau(\tilde{d_1},d_1) = 1/4$. %We now wish to consider the effect of these four dominoes on the other dominoes.

Let $D = d_0 \cup \tilde{d_0} \cup d_1 \cup \tilde{d_1}$: $\clS^{\vu}(D)$ is shown in Figure \ref{fig:postrit_diff}. %: notice that the cube marked $C$ is black.  
$D$ contains a single square $Q$ of side $2$ and normal vector $\vu$.
Define $\Aout = \clS^{\vu}(D) \cap \cR$, and notice that (see Figure \ref{fig:postrit_diff}) $\clS^{\vu}(Q) \cap \cR = \Aout \cup C_1 \cup C_2 \cup C_3$, where $C_i$ are three basic cubes: if we look at the arrows in Figure \ref{fig:postrit_diff}, we see that two of them are white and one is black. Since $\cR$ is fully balanced with respect to $\vu$, 
$$\sum_{C \subset \Aout} \ccol(C) = \sum_{C \subset \clS^{\vu}(Q) \cap \cR}{\ccol(C)} - \sum_{1 \leq i \leq 3}\ccol(C_i) = 1.$$  

% Also $\cS^{\ez}(Q) \setminus \cS^{\ez}(D)$ consists of three cubes (

%let $\Aout$ be the intersection of $\cR$ with the union of the shades of these four dominoes, minus the union of the four dominoes themselves. Hence, $\Aout = \cB \cup C$, where $\cB$ is the intersection of a $2 \times 2 \times \infty$ grid consisting of all the cubes below the position of the trit, and $C$ is a single cube in the bottom floor of the trit position (the one that is not involved in the trit itself). We can assume, without loss of generality, that $C$ is black (otherwise, translate $t_0$ and $t_1$ by, say, $\ex$): this situation is illustrated in Figure \ref{fig:postrit_diff}.

\begin{figure}[ht]%
\centering
\def\svgwidth{0.5\columnwidth}%
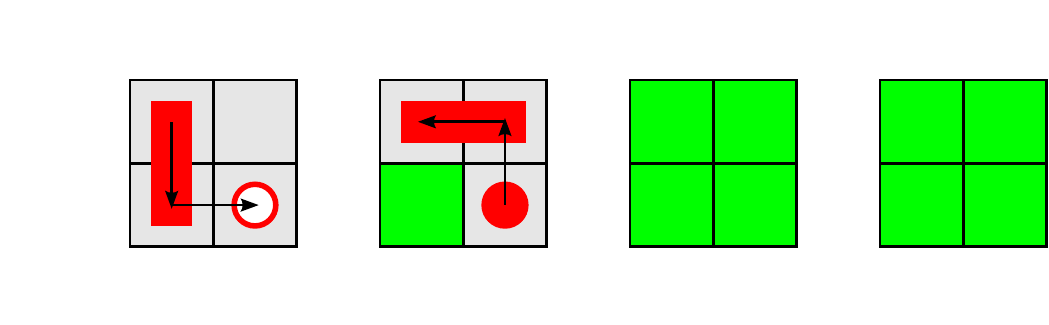
\caption{Illustration of a positive trit position: the portrayed dominoes belong to $t_0$, and the green cubes represent $S^{\vu}(D)$. The vectors $-\tv(d_0)$, $-\tv(\tilde{d_0})$, $\tv(d_1)$ and $\tv(\tilde{d_1})$ are shown.}%
\label{fig:postrit_diff}%
\end{figure} 

By looking at Figure \ref{fig:postrit_diff}, we see that we have a situation that is very similar to Figure \ref{fig:flip_shadow_c2_scheme}; for each $d \in t_0 \cap t_1$, we have 
$$E^{-\vu}(d) = \begin{cases} 0, &\text{if } d \subset \Aout \text{ or } d \cap \Aout = \emptyset;\\
 \frac{1}{4}, &\text{if } \tv(d) \text{ points into } \Aout; \\
-\frac{1}{4}, &\text{if } \tv(d) \text{ points away from } \Aout\end{cases} $$  
(when we say that $\tv(d)$ points into or away from $\Aout$, we are assuming that $d$ intersects one cube of $\Aout$).
Hence, $$\sum_{d \in t_0 \cap t_1}E^{-\vu}(d) = \frac{1}{4} \sum_{C \subset \cA^{\vu}}\ccol(C) = \frac{1}{4}.$$

A completely symmetrical argument shows that $\sum_{d \in t_0 \cap t_1}E^{\vu}(d) = 1/4$, and hence
\begin{align*}
&T^{\vu}(t_1) - T^{\vu}(t_0) = (\tau(d_1,\tilde{d_1}) + \tau(\tilde{d_1},d_1)) - (\tau(d_0,\tilde{d_0}) + \tau(\tilde{d_0},d_0))\\ &+ \sum_{d \in t_0 \cap t_1}E^{-\vu}(d) + \sum_{d \in t_0 \cap t_1}E^{\vu}(d)
= \frac{1}{4} + \frac{1}{4} + \frac{1}{4} + \frac{1}{4}  = 1,
\end{align*}
which completes the proof.   
\end{proof}
\section{Topological groundwork for the twist}
\label{sec:topologicalGroundwork}

%In this section, we introduce a dual viewpoint on objects previously defined in Section \ref{sec:combTwistBoxes} 
In this section, we develop a topological interpretation of tilings and twists. Dominoes are (temporarily) replaced by dimers, which, although formally different objects, are really just a different way of looking at dominoes. Although we will tend to work with dimers in this and the following section, we may in later sections switch back and forth between these two viewpoints.

Let $\cR$ be a region. A \emph{segment} $\ell$ of $\cR$ is a straight line of unit length connecting the centers of two cubes of $\cR$; in other words, $\ell:[0,1] \to \RR^3$ with $\ell(s) = p_0 + (p_1 - p_0)s$, where $p_0$ and $p_1$ are the centers of two cubes that share a face: this segment is a \emph{dimer} if $p_0 = \ell(0)$ is the center of a white cube. We define $\tv(\ell) = \ell(1) - \ell(0)$ (compare this with the definition of $\tv(d)$ for a domino $d$). If $\ell$ is a segment, $(-\ell)$ denotes the segment $s \mapsto \ell(1-s)$: notice that either $\ell$ or $-\ell$ is a dimer.

Two segments $\ell_0$ and $\ell_1$ are \emph{adjacent} if $\ell_0 \cap \ell_1 \neq \emptyset$ (here we make the usual abuse of notation of identifying a curve with its image in $\RR^3$); nonadjacent segments are \emph{disjoint}. In particular, a segment is always adjacent to itself. 

A \emph{tiling} of $\cR$ by dimers is a set of pairwise disjoint dimers such that the center of each cube of $\cR$ belongs to exactly one dimer of $t$. If $t$ is a tiling, $(-t)$ denotes the set of segments $\{-\ell | \ell \in t\}$.   

Given a map $\gamma:[m,n] \to \RR^3$, a segment $\ell$ and an integer $k \in [m,n-1]$, we abuse notation by making the identification $\gamma|_{[k,k+1]} = \ell$ if $\gamma(s) = \ell(s-k)$ for each $s \in [k,k+1]$. 
A \emph{curve} of $\cR$ is a map $\gamma:[0,n] \to \RR^3$ such that $\gamma|_{[k,k+1]}$ is (identified with) a segment of $\cR$ for $k=0,1,\ldots,n-1$.
We make yet another abuse of notation by also thinking of $\gamma$ as a sequence or set of segments of $\cR$, and we shall write $\ell \in \gamma$ to denote that $\ell = \gamma|_{[k,k+1]}$ for some $k$. 

A curve $\gamma:[0,n] \to \RR^3$ of $\cR$ is \emph{closed} if $\gamma(0) = \gamma(n)$; it is \emph{simple} if $\gamma$ is injective in $[0,n)$. A closed curve $\gamma:[0,2] \to \RR^3$ of $\cR$ is called \emph{trivial}: notice that, in this case, $\gamma|_{[0,1]} = -(\gamma|_{[1,2]})$ (when identified with their respective segments of $\cR$). A \emph{discrete rotation} on $[0,n]$ is a function $\rho:[0,n] \to [0,n]$ with $\rho(s) = (s + k) \bmod n$, for a fixed $k \in \ZZ$. If $\gamma_0:[0,n] \to \RR^3$ and $\gamma_1:[0,m] \to \RR^3$ are two closed curves, we say $\gamma_0 = \gamma_1$ if $n = m$ and $\gamma_1 = \gamma_0 \circ \rho$ for some discrete rotation $\rho$ on $[0,n]$.

Given two tilings $t_0$ and $t_1$, there exists a unique (up to discrete rotations) finite set of disjoint closed curves $\Gamma(t_0,t_1) = \{\gamma_i | 1 \leq i \leq m\}$ such that $t_0 \cup (-t_1) = \{ \ell | \ell \in \gamma_i \text{ for some } i\}$ and such that every nontrivial $\gamma_i$ is simple. Figure \ref{fig:auxiliaryLinesExample} shows an example. We define $\Gamma^*(t_0,t_1) := \{\gamma \in \Gamma(t_0,t_1) | \gamma \text{ nontrivial } \}$. 

Translating effects from the world of dominoes to the world of dimers is relatively straightforward. For $\vu \in \Phi$, $\Pi^{\vu}$ will denote the orthogonal projection on the plane $\pi^{\vu} = \vu^{\perp} = \{\vw \in \RR^3 | \vw \cdot \vu = 0\}$. Given two segments $\ell_0$ and $\ell_1$, we set:

$$
\tau^{\vu}(\ell_0, \ell_1) = 
\begin{cases}
\frac{1}{4} \det(\tv(\ell_1), \tv(\ell_0), \vu), &\Pi^{\vu}(\ell_0) \cap \Pi^{\vu}(\ell_1) \neq \emptyset, \ell_0(0) \cdot \vu < \ell_1(0) \cdot \vu; \\
0, &\text{otherwise.} 
\end{cases}
$$
Notice that this definition is analogous to the one given in Section \ref{sec:combTwistBoxes} for dominoes.

%For what follows, we need the concept of linking number $\Link(\gamma_0,\gamma_1)$ of two disjoint closed simple curves of $\RR^3$ (see, e.g., \cite[pp. 18--19]{knotbook}). Also, recall that each crossing in a projection has a sign (see, e.g., \cite[p. 19]{knotbook}); this sign can be calculated via a modified right-hand rule, as follows: let $\vec{v_0}$ and $\vec{v_1}$, respectively, denote the tangent vectors of the segments that are above and below in the crossing. The sign of the crossing is given by $\det(\vec{v_1}, \vec{v_0}, \ez)$ (recall that $\ez$ points towards the paper).  

The definition of $\tau^{\vu}$ is given in terms of the orthogonal projection $\Pi^{\vu}$. From a topological viewpoint, however, this projection is not ideal, because it gives rise to nontransversal intersections between projections of segments. In order to solve this problem, we consider small perturbations of these projections.

%the perturbations $\Pi_{a,b}$, for small $(a,b) \neq (0,0).$ Before we can comfortably work with these objects, however, we need a few technical Lemmas. %This extra work will pay off, however, as some nontrivial consequences will follow.

Recall that $\bB$ is the set of positively oriented basis $\beta = (\vbeta_1,\vbeta_2,\vbeta_3)$ with vectors in $\Phi$. If $\beta \in \bB$ and $a,b \in \RR$, $\Pi^{\beta}_{a,b}$ will be used to denote the projection on the plane $\pi^{\vbeta_3} = \vbeta_3^{\perp} = \{\vu \in \RR^3 | \vu \cdot \vbeta_3 = 0\}$ whose kernel is the subspace (line) generated by the vector $\vbeta_3 + a \vbeta_1 + b \vbeta_2$. For instance, if $\beta = (\ex, \ey, \ez)$ is the canonical basis, $\Pi^{\beta}_{a,b}(x,y,z) = (x - az, y - bz, 0)$. 
Notice that $\Pi^{\beta}_{0,0} = \Pi^{\vbeta_3}$ is the orthogonal projection on the plane $\pi^{\vbeta_3}$, and, for small $(a,b) \neq (0,0)$, $\Pi^{\beta}_{a,b}$ is a nonorthogonal projection on $\pi^{\vbeta_3}$ which is a slight perturbation of $\Pi^{\vbeta_3}$.

Given $\beta \in \bB$, $\vu = \vbeta_3$ and small nonzero $a,b \in \RR$, set the \emph{slanted effect}
$$
\tau^{\beta}_{a,b}(\ell_0,\ell_1) =
\begin{cases}
\det(\tv(\ell_1), \tv(\ell_0), \vu), &\Pi^{\beta}_{a,b}(\ell_0) \cap \Pi^{\beta}_{a,b}(\ell_1) \neq \emptyset, \vu \cdot \ell_0(0)  < \vu \cdot \ell_1(0);\\
0, &\text{otherwise.} 
\end{cases}
$$ 

Recall from knot theory the concept of crossing (see, e.g., \cite[p.18]{knotbook}). Namely, if $\gamma_0: I_0 \to \RR^3$, $\gamma_1: I_1 \to \RR^3$ are two continuous curves, $s_j \in \interior(I_j)$ and $\Pi$ is a projection from $\RR^3$ to a plane, then $(\Pi, \gamma_0, s_0, \gamma_1, s_1)$ is a \emph{crossing} if $\gamma_0(s_0) \neq \gamma_1(s_1)$ but $\Pi(\gamma_0(s_0)) = \Pi(\gamma_1(s_1))$. If, furthermore, $\gamma_j$ is of class $C^1$ in $s_j$ and the vectors $\gamma_1'(s_1)$, $\gamma_0'(s_0)$ and $\gamma_1(s_1) - \gamma_0(s_0)$ are linearly independent, then the crossing is \emph{transversal}; its \emph{sign} is the sign of $\det(\gamma_1'(s_1), \gamma_0'(s_0),\gamma_1(s_1) - \gamma_0(s_0))$. We are particularly interested in the case where the curves are segments of a region $\cR$.

%If $\ell_0$ and $\ell_1$ are two segments of a region $\cR$ and $s_0, s_1 \in [0,1]$,
%we say that $(\Pi^{\beta}_{a,b}, \ell_0, s_0, \ell_1, s_1)$ is a \emph{crossing} if 
%$\ell_0(s_0) \neq \ell_1(s_1)$ but $\Pi^{\beta}_{a,b}(\ell_0(s_0)) = \Pi^{\beta}_{a,b}(\ell_1(s_1))$. If, in addition, $\tv(\ell_0) \perp \tv(\ell_1)$ and $s_0,s_1 \in (0,1)$, we say that the crossing is \emph{transversal}. 
%
%If $\ell_0$ and $\ell_1$ are involved in a transversal crossing, the pair $(s_0,s_1)$ is unique, and the point $p = \Pi^{\beta}_{a,b}(\ell_0(s_0)) = \Pi^{\beta}_{a,b}(\ell_1(s_1))$ is then a crossing in the usual topological sense.
%The sign of this crossing is given by a modified right-hand rule (see, e.g., \cite[\S 3]{writhingNumber}), as follows: let $\vw = \vbeta_3 + a \vbeta_1 + b \vbeta_2$. Notice that $\vw \cdot \ell_0(s_0) \neq \vw \cdot \ell_1(s_1)$: if $\vw \cdot \ell_0(s_0) < \vw \cdot \ell_1(s_1)$, the sign of the crossing is the sign of $\det(\tv(\ell_1), \tv(\ell_0), \vw)$ and therefore equals $\pm 1$. 

For a region $\cR$ and $\vu \in \Phi$, we define the $\vu$-\emph{length} of $\cR$ as
 $$N = \max_{p_0, p_1 \in R} |\vu \cdot (p_0 - p_1)|.$$

\begin{lemma}
\label{lemma:transversalCrossings}
Let $\cR$ be a region, and fix $\beta \in \bB$. Let $N$ be the $\vbeta_3$-length of $\cR$, and let $a,b \in \RR$ with $0 < |a|,|b| < 1/N$.
Then $\tau^{\beta}_{a,b}(\ell_0,\ell_1) + \tau^{\beta}_{a,b}(\ell_1,\ell_0) \neq 0$ if and only if there exist $s_0, s_1 \in [0,1]$ such that $\ell_0(s_0) \neq \ell_1(s_1)$ but $\Pi^{\beta}_{a,b}(\ell_0(s_0)) = \Pi^{\beta}_{a,b}(\ell_1(s_1))$. 

Moreover, if the latter condition holds for $s_0, s_1$, then $(\Pi^{\beta}_{a,b},\ell_0, s_0, \ell_1, s_1)$ is a transversal crossing whose sign is given by $\tau^{\beta}_{a,b}(\ell_0,\ell_1) + \tau^{\beta}_{a,b}(\ell_1,\ell_0)$.

%If $(\Pi^{\beta}_{a,b},\ell_0, s_0, \ell_1, s_1)$ is a crossing, then it is transversal.
%Moreover, a transversal crossing occurs if and only if $\tau^{\beta}_{a,b}(\ell_0,\ell_1) + \tau^{\beta}_{a,b}(\ell_1,\ell_0) \neq 0$, and the sign of the crossing is given by $\tau^{\beta}_{a,b}(\ell_0,\ell_1) + \tau^{\beta}_{a,b}(\ell_1,\ell_0)$.
%Moreover, the sign of this crossing is given by $\tau^{\beta}_{a,b}(\ell_0,\ell_1) + \tau^{\beta}_{a,b}(\ell_1,\ell_0)$.
%
%
 %
 %Then, $\ell_0$ and $\ell_1$ cross transversally in $\Pi^{\beta}_{a,b}$ if and only if $\tau^{\beta}_{a,b}(\ell_0,\ell_1) + \tau^{\beta}_{a,b}(\ell_1,\ell_0) \neq 0$.
%Moreover, if $\ell_0$ and $\ell_1$ cross transversally, the sign of the crossing is given by $\tau^{\beta}_{a,b}(\ell_0,\ell_1) + \tau^{\beta}_{a,b}(\ell_1,\ell_0)$.
%\begin{enumerate}[label=\upshape(\roman*)]
	%\item $\tau^{\beta}_{a,b}(\ell_0,\ell_1) + \tau^{\beta}_{a,b}(\ell_1,\ell_0) \neq 0$;
	%\item $\Pi(\ell_0)$ and $\Pi(\ell_1)$ intersect transversally.%, and the sign of the crossing is given by $\tau^{\beta}_{a,b}(\ell_0,\ell_1) + \tau^{\beta}_{a,b}(\ell_1,\ell_0)$.
%\end{enumerate}
\end{lemma}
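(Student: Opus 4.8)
The plan is to reduce to the canonical basis and then run a short case analysis on the directions $\tv(\ell_0),\tv(\ell_1)\in\Phi$ relative to $\vu=\vbeta_3$. First I would observe that there is a signed permutation $M\in SO(3)$ of the coordinate axes carrying $\beta$ to $(\ex,\ey,\ez)$; it maps $\ZZ^3$ to itself, regions to regions and segments to segments, conjugates $\Pi^\beta_{a,b}$ to $\Pi^{(\ex,\ey,\ez)}_{a,b}$, and preserves the $\vbeta_3$-length $N$. Since $\tau^\beta_{a,b}$ and the notion of (transversal) crossing depend only on the geometry of the segments, it suffices to take $\beta=(\ex,\ey,\ez)$, so that $\vu=\ez$ and $\Pi:=\Pi^\beta_{a,b}$ is $(x,y,z)\mapsto(x-az,y-bz,0)$. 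The one structural fact I would isolate at the outset is that the restriction of $\Pi$ to any horizontal plane $\{z=c\}$ is an injective translation; hence two points with equal $\Pi$-image and equal last coordinate must coincide. I call a segment \emph{vertical} if $\tv(\ell)=\pm\ez$ and \emph{horizontal} otherwise; a horizontal segment lies in a single plane $\{z=c\}$, $c$ a half-integer.

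\textbf{Degenerate cases.} I would check that in each of the following situations one has $\tau^\beta_{a,b}(\ell_0,\ell_1)+\tau^\beta_{a,b}(\ell_1,\ell_0)=0$ \emph{and} there is no pair $s_0,s_1\in[0,1]$ with $\ell_0(s_0)\neq\ell_1(s_1)$ and $\Pi(\ell_0(s_0))=\Pi(\ell_1(s_1))$: (a) at least one of $\ell_0,\ell_1$ is vertical; (b) both are horizontal in the same plane $\{z=c\}$; (c) both are horizontal with parallel directions, lying in distinct planes. The vanishing of the effect is immediate since $\det(\tv(\ell_1),\tv(\ell_0),\ez)$ then has two parallel arguments or a $\ez$-parallel argument. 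For the absence of a crossing, the point is that any candidate equality $\Pi(\ell_0(s_0))=\Pi(\ell_1(s_1))$ forces, in one of the two plane coordinates, an equation of the form (integer) $=a\Delta$ or (integer) $=b\Delta$, where $\Delta$ is a difference of $z$-coordinates of points lying on segments of $\cR$, so $|\Delta|\le N-1<N$; since $0<|a|,|b|<1/N$ this forces $\Delta=0$, and then injectivity of $\Pi$ on horizontal planes forces $\ell_0(s_0)=\ell_1(s_1)$, a contradiction. This is precisely where the hypothesis $|a|,|b|<1/N$ is used.

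\textbf{The generic case.} It remains to treat $\ell_0$ and $\ell_1$ horizontal, lying in distinct planes $\{z=z_0\}$ and $\{z=z_1\}$ with $z_0\neq z_1$, and with perpendicular directions; after possibly swapping $\ell_0\leftrightarrow\ell_1$ (which changes neither $\tau^\beta_{a,b}(\ell_0,\ell_1)+\tau^\beta_{a,b}(\ell_1,\ell_0)$ nor the sign of the crossing) and reflecting, I may assume $\tv(\ell_0)\in\{\pm\ex\}$, $\tv(\ell_1)\in\{\pm\ey\}$. Then $\Pi(\ell_0)$ is a horizontal unit segment and $\Pi(\ell_1)$ a vertical unit segment in $\pi^{\vu}$, so they meet in at most one point, and they meet exactly when the intervals defining them overlap, which is the condition ``$\Pi^\beta_{a,b}(\ell_0)\cap\Pi^\beta_{a,b}(\ell_1)\neq\emptyset$'' from the definition of $\tau^\beta_{a,b}$. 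When they meet I would solve for the unique $(s_0,s_1)$, noting that $s_0=(x_1-x_0)-a(z_1-z_0)$ is a nonzero-fractional perturbation of an integer (again by $|a|<1/N$ and $z_0\neq z_1$), hence $s_0\in(0,1)$, and similarly $s_1\in(0,1)$; moreover $\ell_1(s_1)-\ell_0(s_0)$ has nonzero $\ez$-component $z_1-z_0$, so together with the non-parallel horizontal vectors $\tv(\ell_1),\tv(\ell_0)$ it spans $\RR^3$ and the crossing is transversal. A direct expansion gives $\det(\tv(\ell_1),\tv(\ell_0),\ell_1(s_1)-\ell_0(s_0))=(z_1-z_0)\det(\tv(\ell_1),\tv(\ell_0),\ez)$, whose sign I would match against $\tau^\beta_{a,b}(\ell_0,\ell_1)+\tau^\beta_{a,b}(\ell_1,\ell_0)$ by splitting into the cases $z_0<z_1$ and $z_1<z_0$ (in each, exactly one of the two $\tau$'s is nonzero and equals $\pm\det(\tv(\ell_1),\tv(\ell_0),\ez)$). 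Since $z_0\neq z_1$ automatically gives $\ell_0(s_0)\neq\ell_1(s_1)$, this case proves the forward implication of the ``iff''; the degenerate cases give the reverse implication, and together the two yield the ``moreover''.

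\textbf{Main obstacle.} The computation is elementary throughout; the two delicate points are (i) the sign bookkeeping in the generic case, i.e.\ relating $\sgn\det(\tv(\ell_1),\tv(\ell_0),\ell_1(s_1)-\ell_0(s_0))$ to $\tau^\beta_{a,b}(\ell_0,\ell_1)+\tau^\beta_{a,b}(\ell_1,\ell_0)$ with the orientations lined up correctly, and (ii) making the use of $0<|a|,|b|<1/N$ airtight in the degenerate cases, that is, pinning down exactly which difference of $z$-coordinates appears and verifying its absolute value is strictly less than $N$.
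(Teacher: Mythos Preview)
Your argument is correct and rests on the same key observation as the paper's: the bounds $0<|a|,|b|<1/N$ force certain integer-valued coordinate differences to vanish, which rules out crossings in the degenerate direction-configurations and forces $s_0,s_1\in(0,1)$ in the generic one. The organization, however, differs. You run an explicit case split on the directions $\tv(\ell_0),\tv(\ell_1)$ relative to $\vu$, while the paper works more uniformly: it writes the crossing condition as $\ell_1(s_1)-\ell_0(s_0)=c(\vbeta_3+a\vbeta_1+b\vbeta_2)$ with $c\neq 0$, and then proves $\det(\tv(\ell_1),\tv(\ell_0),\vbeta_3)\neq 0$ by contradiction, taking an inner product with whichever of $\vbeta_1,\vbeta_2$ annihilates both $\tv(\ell_i)$ to produce an equation of the form $ac\in\ZZ$ with $|ac|<1$. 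This is really the same computation you do in your degenerate cases (a) and (c), repackaged so that one does not need to enumerate the direction pairs or reduce to the canonical basis. Your version has the compensating advantage of making the excluded geometric configurations explicit, and your injectivity-on-horizontal-planes observation is a clean way to finish the degenerate cases.

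Two small imprecisions in your generic case: the quantity you solve for is $\sigma_0 s_0$ (with $\sigma_0=\pm1$ the sign of $\tv(\ell_0)\cdot\ex$) rather than $s_0$ itself, so the conclusion $s_0\in(0,1)$ needs one more line (an integer plus a nonzero fraction in $(-1,1)$, constrained to $[-1,1]$, cannot equal $0$ or $\pm1$); and the word ``reflecting'' in your reduction is unnecessary, since swapping $\ell_0\leftrightarrow\ell_1$ already arranges $\tv(\ell_0)\in\{\pm\ex\}$ and $\tv(\ell_1)\in\{\pm\ey\}$.
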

\begin{proof}
%$(\Pi^{\beta}_{a,b},\ell_0, s_0, \ell_1, s_1)$ being a crossing can be rephrased as
Suppose $\tau^{\beta}_{a,b}(\ell_0,\ell_1) + \tau^{\beta}_{a,b}(\ell_1,\ell_0) \neq 0$. We may without loss of generality assume $\tau^{\beta}_{a,b}(\ell_0,\ell_1) \neq 0$. By definition, we have $\Pi^{\beta}_{a,b}(\ell_0(s_0)) = \Pi^{\beta}_{a,b}(\ell_1(s_1))$ for some $s_0, s_1 \in [0,1]$ and $\vbeta_3 \cdot \ell_0(0) < \vbeta_3 \cdot \ell_1(0)$. Since $\det(\tv(\ell_1), \tv(\ell_0), \vbeta_3) \neq 0$, we have
$$\vbeta_3 \cdot \ell_0(s_0) = \vbeta_3 \cdot (\ell_0(0) + s_0 \tv(\ell_0)) = \vbeta_3 \cdot \ell_0(0) <\vbeta_3 \cdot \ell_1(0) = \vbeta_3 \cdot \ell_1(s_1),$$
and thus $\ell_0(s_0) \neq \ell_1(s_1)$. 

Conversely, suppose $\ell_0(s_0) \neq \ell_1(s_1)$ but $\Pi^{\beta}_{a,b}(\ell_0(s_0)) = \Pi^{\beta}_{a,b}(\ell_1(s_1))$: this can be rephrased as
\begin{equation}
\ell_1(s_1) - \ell_0(s_0) = c (\vbeta_3 + a \vbeta_1 + b \vbeta_2)
\label{eq:projectionKernel}
\end{equation}
for some $c \neq 0$. Notice that $c = \vbeta_3 \cdot (\ell_1(s_1) - \ell_0(s_0))$, so that $|c| \leq N$.

We now observe that $\det(\tv(\ell_1), \tv(\ell_0), \vbeta_3) \neq 0$. Suppose, by contradiction, that $\det(\tv(\ell_1), \tv(\ell_0), \vbeta_3) = 0$. Then, at least one of the following statements must be true: $\vbeta_1 \cdot \tv(\ell_0) = \vbeta_1 \cdot \tv(\ell_1) = 0$; or $\vbeta_2 \cdot \tv(\ell_0) = \vbeta_2 \cdot \tv(\ell_1) = 0$. Assume that the first statement holds (i.e., $\vbeta_1 \cdot \tv(\ell_i) = 0$). 
By definition of segment, 
$\ell_i(s_i) = \ell_i(0) + s_i \tv(\ell_i)$. By taking the inner product with $\vbeta_1$ on both sides of \eqref{eq:projectionKernel}, $a c = \vbeta_1 \cdot (\ell_1(s_1) - \ell_0(s_0)) = \vbeta_1 \cdot (\ell_1(0) - \ell_0(0))$. Now $\ell_0(0), \ell_1(0) \in (\plshalf{\ZZ})^3$, so that $a c = \vbeta_1 \cdot (\ell_1(0) - \ell_0(0)) \in \ZZ$. Since $|a| < 1/N$, $|ac| < 1$ and thus $c = 0$, which is a contradiction.  

Finally, since  $\vbeta_3 \cdot \tv(\ell_0) = \vbeta_3 \cdot \tv(\ell_1) = 0$, we have $\vbeta_3 \cdot (\ell_1(0) - \ell_0(0)) = \vbeta_3 \cdot (\ell_1(s_0) - \ell_0(s_1)) = c \neq 0$. From the definition of $\tau^{\beta}_{a,b}$, we see that $\tau^{\beta}_{a,b}(\ell_0, \ell_1) + \tau^{\beta}_{a,b}(\ell_1, \ell_0) \neq 0$. 

To see the last claim, we first note that $s_i \in (0,1)$: since $\tv(\ell_i) \in \{\pm\vbeta_1, \pm \vbeta_2\}$, we may take the inner product with $\tv(\ell_i)$ on both sides of \eqref{eq:projectionKernel} to get that $s_i$ equals  either $|a c|$ or $|bc|$, and hence $s_i \in (0,1)$. Since $\tv(\ell_0) \perp \tv(\ell_1)$, this proves that $(\Pi^{\beta}_{a,b},\ell_0, s_0, \ell_1, s_1)$ is a transversal crossing. If $\vw = \vbeta_3 + a\vbeta_1 + b\vbeta_2$, the sign of this crossing is given by the sign of $\det(\tv(\ell_1), \tv(\ell_0), c\vw)$. By switching the roles of $\ell_0$ and $\ell_1$ if necessary, we may assume that $c > 0$, so that this sign equals $\det(\tv(\ell_1), \tv(\ell_0), \vw) = \det(\tv(\ell_1), \tv(\ell_0), \vbeta_3) = \tau_{a,b}(\ell_0,\ell_1)$, completing the proof. 
\end{proof}

\begin{lemma}
\label{lemma:crossings}
Let $\cR$ be a region, and let $\beta \in \bB$. Let $N$ denote the $\vbeta_3$-length of $\cR$, and suppose $0 < \epsilon < 1/N$. Given two segments $\ell_0$ and $\ell_1$,
$$\tau^{\vbeta_3}(\ell_0,\ell_1) = \frac{1}{4}\sum_{i,j \in \{-1,1\}} \tau^{\beta}_{i\epsilon, j\epsilon}(\ell_0,\ell_1) .$$
\end{lemma}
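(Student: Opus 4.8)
The plan is to verify the identity one pair of segments at a time, reducing everything to an elementary analysis of how the two projected segments sit in the plane. First I would reduce to the canonical basis: both sides of the asserted identity are equivariant under the rotation $\rho$ (a signed coordinate permutation with $\det\rho=1$) carrying $\beta$ to $(\ex,\ey,\ez)$. Indeed $\rho$ preserves $\ZZ^3$ and the $\vbeta_3$-length $N$, it intertwines $\Pi^{\beta}_{a,b}$ with $\Pi^{(\ex,\ey,\ez)}_{a,b}$ (a projection is determined by its kernel and image, and $\rho$ carries both correctly), and it leaves $\det(\tv(\ell_1),\tv(\ell_0),\vbeta_3)$ unchanged; so I may assume $\beta=(\ex,\ey,\ez)$, where $\Pi^{\beta}_{a,b}(x,y,z)=(x-az,\,y-bz,\,0)$.

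Next I would dispose of the degenerate cases and normalize. If $\det(\tv(\ell_1),\tv(\ell_0),\ez)=0$ then the left-hand side and every summand on the right are $0$ by definition; if $\ez\cdot\ell_0(0)\ge\ez\cdot\ell_1(0)$ then the ordering clause fails in $\tau^{\ez}(\ell_0,\ell_1)$ and in every $\tau^{\beta}_{i\epsilon,j\epsilon}(\ell_0,\ell_1)$, so again both sides vanish. Hence I may assume $\det(\tv(\ell_1),\tv(\ell_0),\ez)\ne0$ — which forces $\{\tv(\ell_0),\tv(\ell_1)\}$ to contain one of $\pm\ex$ and one of $\pm\ey$, so $\ell_0$ and $\ell_1$ are both orthogonal to $\ez$ — and $z_0:=\ez\cdot\ell_0(0)<z_1:=\ez\cdot\ell_1(0)$. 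Using the symmetry $(x,y,z)\mapsto(y,x,z)$, which negates both sides simultaneously, I may further assume $\tv(\ell_0)=\pm\ex$ and $\tv(\ell_1)=\pm\ey$.

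Now for the core count. Write $\ell_0\subset\{(x,y_0,z_0):x_0\le x\le x_0+1\}$ and $\ell_1\subset\{(x_1,y,z_1):y_1\le y\le y_1+1\}$ with $x_0,y_0,z_0,x_1,y_1,z_1\in\plshalf{\ZZ}$, and set $\Delta=z_1-z_0$, a positive integer with $\Delta\le N$, so that $0<\epsilon\Delta<1$. For each sign pair $(i,j)$ the image $\Pi^{\beta}_{i\epsilon,j\epsilon}(\ell_0)$ is a horizontal unit segment and $\Pi^{\beta}_{i\epsilon,j\epsilon}(\ell_1)$ a vertical one, and a short computation shows they intersect if and only if
$$x_0\le x_1-i\epsilon\Delta\le x_0+1 \quad\text{and}\quad y_1\le y_0+j\epsilon\Delta\le y_1+1.$$
Since the relevant coordinates are half-integers and $0<\epsilon\Delta<1$, I would then observe: (a) if $\Pi^{\ez}(\ell_0)\cap\Pi^{\ez}(\ell_1)=\emptyset$, then at $\epsilon=0$ one of the two inequalities fails with $x_1\le x_0-1$ or $x_1\ge x_0+2$ (resp.\ the analogous failure of $y_0$ against $[y_1,y_1+1]$), and a perturbation of absolute value $<1$ cannot repair it, so no pair $(i,j)$ contributes and both sides are $0=\tau^{\ez}(\ell_0,\ell_1)$; (b) if $\Pi^{\ez}(\ell_0)\cap\Pi^{\ez}(\ell_1)\ne\emptyset$, then necessarily $x_1\in\{x_0,x_0+1\}$ and $y_0\in\{y_1,y_1+1\}$, and in each of these four configurations exactly one value of $i$ satisfies the first inequality and exactly one value of $j$ satisfies the second (for instance $x_1=x_0\Rightarrow i=-1$ and $x_1=x_0+1\Rightarrow i=+1$). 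Thus exactly one pair $(i,j)$ yields intersecting projections; for it the ordering clause $z_0<z_1$ holds, so $\tau^{\beta}_{i\epsilon,j\epsilon}(\ell_0,\ell_1)=\det(\tv(\ell_1),\tv(\ell_0),\ez)$, while the other three pairs contribute $0$. Therefore $\frac14\sum_{i,j\in\{-1,1\}}\tau^{\beta}_{i\epsilon,j\epsilon}(\ell_0,\ell_1)=\frac14\det(\tv(\ell_1),\tv(\ell_0),\ez)=\tau^{\ez}(\ell_0,\ell_1)$, which is the claim.

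The argument has no genuine obstacle; the only point demanding care is the bookkeeping in the last step — tracking which perturbation sign is the ``good'' one for each of the four configurations $x_1=x_0$ vs.\ $x_1=x_0+1$ and $y_0=y_1$ vs.\ $y_0=y_1+1$, and checking that the single inequality $0<\epsilon\Delta<1$ (which is exactly where the hypotheses $\epsilon<1/N$ and $\Delta\le N$ are used) both isolates that one good pair in case (b) and kills all four pairs in case (a). Organizing case (b) as a small $2\times2$ table makes this transparent.
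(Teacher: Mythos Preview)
Your argument is correct and follows essentially the same outline as the paper's proof: reduce to the nondegenerate ordered case, normalize so that $\tv(\ell_0)$ and $\tv(\ell_1)$ lie along the two coordinate axes orthogonal to $\vbeta_3$, and then show that (i) if the orthogonal projections miss, all four slanted terms vanish, while (ii) if they meet, exactly one of the four slanted terms is nonzero. The one noteworthy difference is that for (i) the paper argues indirectly via a continuity trick (taking $\delta=\min A_{ij}$ and invoking the previous lemma on transversal crossings to force $\delta=0$), whereas you handle both (i) and (ii) by a direct inequality computation using the half-integrality of the endpoints and the bound $0<\epsilon\Delta<1$; your route is slightly more elementary and self-contained, at the cost of a little more explicit bookkeeping.
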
  
 
\begin{proof}
We may assume that $\vbeta_3 \cdot \ell_0(0) < \vbeta_3 \cdot \ell_1(0)$ and that $\det(\tv(\ell_1), \tv(\ell_0), \vbeta_3) \neq 0$ (otherwise both sides would be zero). Since rotations in the $\vbeta_3^{\perp}$ plane leave both sides unchanged, we may assume that $\tv(\ell_1) = \pm\vbeta_1$, $\tv(\ell_0) = \pm\vbeta_2$ (see Figure \ref{fig:projectionDimerCrossings}).

\begin{figure}[ht]%
\centering
\includegraphics[width=0.9\columnwidth]{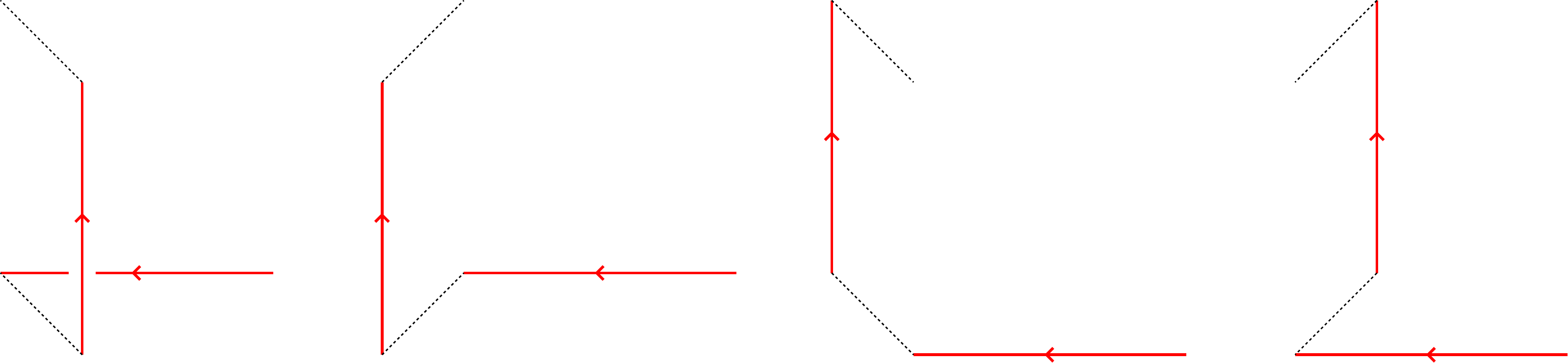}%
\caption{Illustrations of the four different projections $\Pi^{\beta}_{\pm\epsilon, \pm \epsilon}$ of two segments $\ell_0, \ell_1$ with $\tau^{\vbeta_3}(\ell_0,\ell_1) = 1/4$. The dotted lines represent the projection of lines which are parallel to $\vbeta_3$, in each of the four cases. Notice that the segments are involved in a crossing for exactly one of the projections, and this crossing is positive.}%
\label{fig:projectionDimerCrossings}%
\end{figure}

Our strategy is to show these two facts:
\begin{enumerate}[label=(\roman*)]
	\item \label{item:epsImpliesZero} If $\tau^{\beta}_{i\epsilon, j\epsilon}(\ell_0,\ell_1) \neq 0$ for some $(i,j) \in \{-1,1\}^2$, then $\tau^{\vbeta_3}(\ell_0,\ell_1) \neq 0$.
	\item \label{item:zeroImpliesEps} If $\tau^{\vbeta_3}(\ell_0,\ell_1) \neq 0$, then there exists a unique $(i,j) \in \{-1,1\}^2$ such that  $\tau^{\beta}_{i\epsilon, j\epsilon}(\ell_0,\ell_1) \neq 0$. 
\end{enumerate}
Once we prove \ref{item:epsImpliesZero} and \ref{item:zeroImpliesEps}, we get the result. 

Let $c = \vbeta_3 \cdot (\ell_1(0) - \ell_0(0))$, and consider the closed sets 
$$
A_{ij} = \left\{\delta \in [0,\epsilon] | \exists s_0,s_1 \in [0,1],  \ell_1(s_1) - \ell_0(s_0) = c(\vbeta_3 + i \delta \vbeta_1 + j \delta \vbeta_2)\right\}.
$$
Notice that $\epsilon \in A_{ij}$ if and only if $\tau^{\beta}_{i\epsilon, j\epsilon}(\ell_0,\ell_1) \neq 0$, and $0 \in A_{ij}$ if and only if $\tau^{\vbeta_3}(\ell_0,\ell_1) \neq 0$.

Suppose $\epsilon \in A_{ij}$ for some $(i,j) \in \{-1,1\}^2$, and let $\delta = \min A_{ij}$. If $\delta > 0$, $\ell_1(s_1) - \ell_0(s_0) = c(\vbeta_3 + i \delta \vbeta_1 + j \delta \vbeta_2)$ implies, by Lemma \ref{lemma:transversalCrossings},  that $s_0,s_1 \in (0,1)$. Hence, there must exist $\delta' < \delta$ such that $\delta' \in A$, a contradiction. Therefore, we must have $\delta = 0$, so that $0 \in A_{ij}$. We have proved \ref{item:epsImpliesZero}.

Now suppose $\tau^{\vbeta_3}(\ell_0,\ell_1) \neq 0$, that is, $\ell_1(k_1) - \ell_0(k_0) = c\vbeta_3$ for some $k_0,k_1 \in [0,1]$. Clearly $k_0, k_1 \in \{0,1\}$; for simplicity, assume that $k_0 = k_1 = 0$ (the other cases are analogous). Now for any $s_0, s_1 \in [0,1]$,
$\ell_1(s_1) - \ell_0(s_0) = c\vbeta_3 - s_0 \tv(\ell_0) + s_1 \tv(\ell_1)$. Thus, given $(i,j) \in \{-1,1\}^2$,
$$\epsilon \in A_{ij} \Leftrightarrow \exists s_0,s_1 \in [0,1]: \quad s_1 (\tv(\ell_1) \cdot \vbeta_1) = i \epsilon c ,\quad -s_0 (\tv(\ell_0) \cdot \vbeta_2) = j \epsilon c,$$
which occurs if and only if $i \epsilon c (\tv(\ell_1) \cdot \vbeta_1) > 0$ and $j \epsilon c (\tv(\ell_0) \cdot \vbeta_2) < 0$: this determines a unique $(i,j) \in \{-1,1\}^2$, so we have proved \ref{item:zeroImpliesEps}.
\end{proof}

%We now finally begin to shift our focus from the combinatorial formula of the twist to its topological interpretation:
If $A_0$ and $A_1$ are two sets of segments (curves are also seen as sets of segments), $\vu \in \Phi, \beta \in \bB$, define
$$T^{\vu}(A_0,A_1) = \sum_{\substack{\ell_0 \in A_0 \\ \ell_1 \in A_1}} \tau^{\vu}(\ell_0,\ell_1), \quad T^{\beta}_{a,b}(A_0,A_1) = \sum_{\substack{\ell_0 \in A_0 \\ \ell_1 \in A_1}} \tau^{\beta}_{a,b}(\ell_0,\ell_1).$$
For shortness, $T^{\vu}(A) = T^{\vu}(A,A)$ and $T^{\beta}_{a,b}(A) = T^{\beta}_{a,b}(A,A)$.

Consider two disjoint simple closed curves $\gamma_0, \gamma_1$ and a projection $\Pi$ from $\RR^3$ to some plane. Assume there exists finitely many crossings $(\Pi,\gamma_0, s_0, \gamma_1, s_1)$, all transversal.
Recall from knot theory (see, e.g., \cite[pp. 18--19]{knotbook}) that the \emph{linking number} $\Link(\gamma_0,\gamma_1)$ equals half the sum of the signs of all these crossings.

\begin{lemma}
\label{lemma:linkingNumber}
Let $\gamma_0$ and $\gamma_1$ be two disjoint simple closed curves of a region $\cR$. Fix $\beta \in \bB$, and let $N$ denote the $\vbeta_3$-length of $\cR$. Then
\begin{enumerate}[label=\upshape(\roman*),topsep=0.1em]
\item \label{item:linking_ab} If $0 < |a|,|b| < 1/N$, $T^{\vbeta}_{a,b}(\gamma_0, \gamma_1) + T^{\vbeta}_{a,b}(\gamma_1, \gamma_0) = 2\Link(\gamma_0,\gamma_1)$. 
\item \label{item:linking_00} $T^{\vbeta_3}(\gamma_0,\gamma_1) + T^{\vbeta_3}(\gamma_1,\gamma_0) = 2\Link(\gamma_0,\gamma_1).$
\end{enumerate}
\end{lemma}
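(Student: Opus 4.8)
The plan is to deduce both parts of Lemma~\ref{lemma:linkingNumber} from Lemmas~\ref{lemma:transversalCrossings} and~\ref{lemma:crossings}, reducing everything to the knot-theoretic definition of the linking number as half the signed count of crossings under a generic projection.

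\textbf{Part \ref{item:linking_ab}.} First I would fix $a,b$ with $0<|a|,|b|<1/N$ and expand the left-hand side as a double sum over pairs of segments $\ell_0\in\gamma_0$, $\ell_1\in\gamma_1$:
$$T^{\beta}_{a,b}(\gamma_0,\gamma_1) + T^{\beta}_{a,b}(\gamma_1,\gamma_0) = \sum_{\ell_0\in\gamma_0,\ \ell_1\in\gamma_1}\big(\tau^{\beta}_{a,b}(\ell_0,\ell_1) + \tau^{\beta}_{a,b}(\ell_1,\ell_0)\big).$$
By Lemma~\ref{lemma:transversalCrossings}, each term is nonzero precisely when $\Pi^{\beta}_{a,b}(\ell_0)$ and $\Pi^{\beta}_{a,b}(\ell_1)$ cross, and when it is nonzero it equals the sign of the (transversal) crossing $(\Pi^{\beta}_{a,b},\ell_0,s_0,\ell_1,s_1)$. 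Since $\gamma_0$ and $\gamma_1$ are disjoint, there is no ambiguity from endpoints shared between segments of different curves, so the set of crossings of $\Pi^{\beta}_{a,b}(\gamma_0)$ with $\Pi^{\beta}_{a,b}(\gamma_1)$ is exactly indexed by the pairs of segments contributing nonzero terms, and for each such pair Lemma~\ref{lemma:transversalCrossings} guarantees $s_0,s_1\in(0,1)$, so each crossing is counted once with its correct sign. Hence the double sum is the total signed crossing count between $\gamma_0$ and $\gamma_1$, which by definition is $2\Link(\gamma_0,\gamma_1)$. One should note that a crossing could in principle occur between a segment of $\gamma_0$ and itself or between two segments of $\gamma_1$, but these do not enter the sum defining the left-hand side (which pairs one segment from each curve), and they also do not affect $\Link(\gamma_0,\gamma_1)$; so this is consistent.

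\textbf{Part \ref{item:linking_00}.} For the orthogonal projection I would use Lemma~\ref{lemma:crossings}: choosing $\epsilon$ with $0<\epsilon<1/N$,
$$T^{\vbeta_3}(\gamma_0,\gamma_1) + T^{\vbeta_3}(\gamma_1,\gamma_0) = \frac{1}{4}\sum_{i,j\in\{-1,1\}}\big(T^{\beta}_{i\epsilon,j\epsilon}(\gamma_0,\gamma_1) + T^{\beta}_{i\epsilon,j\epsilon}(\gamma_1,\gamma_0)\big),$$
where I have summed the identity of Lemma~\ref{lemma:crossings} over all segment pairs (it holds termwise). Each of the four bracketed terms equals $2\Link(\gamma_0,\gamma_1)$ by part \ref{item:linking_ab} — the linking number is a topological invariant and does not depend on which of these four perturbed projections we use — so the right-hand side is $\frac14\cdot 4\cdot 2\Link(\gamma_0,\gamma_1) = 2\Link(\gamma_0,\gamma_1)$.

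\textbf{Main obstacle.} The routine parts are the bookkeeping; the one genuine subtlety is making sure that the crossings counted by $\sum(\tau^{\beta}_{a,b}(\ell_0,\ell_1)+\tau^{\beta}_{a,b}(\ell_1,\ell_0))$ are in bijection with the crossings of the projected curves used in the knot-theoretic definition, i.e.\ that no crossing is missed, double-counted, or lands at a projected vertex of one of the curves. This is exactly where Lemma~\ref{lemma:transversalCrossings} does the work — the condition $0<|a|,|b|<1/N$ forces $s_0,s_1\in(0,1)$, so every crossing occurs in the relative interior of a pair of segments and is transversal — but I would spell out that disjointness of $\gamma_0,\gamma_1$ rules out the degenerate coincidences, and that within a single curve any self-crossings are irrelevant to both sides. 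Once that correspondence is clean, both parts follow immediately.
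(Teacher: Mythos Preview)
Your proposal is correct and follows exactly the paper's approach: part~\ref{item:linking_ab} via Lemma~\ref{lemma:transversalCrossings} to identify the sum with the signed crossing count, and part~\ref{item:linking_00} by averaging the four slanted projections using Lemma~\ref{lemma:crossings} and reducing to part~\ref{item:linking_ab}. Your write-up is simply more detailed than the paper's two-sentence version, spelling out the bookkeeping about endpoints and disjointness that the paper leaves implicit.
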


\begin{proof}
By Lemma \ref{lemma:transversalCrossings}, the sum of signs of the crossings is given by $T^{\beta}_{a,b}(\gamma_0,\gamma_1) + T^{\beta}_{a,b}(\gamma_1,\gamma_0)$, which establishes \ref{item:linking_ab}. Also, \ref{item:linking_00} follows from \ref{item:linking_ab} and Lemma \ref{lemma:crossings}. 
%$$T^{\vbeta_3}(\gamma_0,\gamma_1) + T^{\vbeta_3}(\gamma_1,\gamma_0) = \frac{1}{4} \sum_{i,j \in \{-1,1\}} (T^{\beta}_{i\epsilon,j\epsilon}(\gamma_0,\gamma_1) + T^{\beta}_{i\epsilon,j\epsilon}(\gamma_1,\gamma_0)) = 2 \Link(\gamma_0,\gamma_1).$$
\end{proof}

%Lemma \ref{lemma:linkingNumber} has a number of important consequences, as it shall become clear in the following sections. 
%in Section \ref{sec:writheInterpretation}, it will help us obtain a new topological formula for the twist of cylinders, which, in particular, allows us to show that it is always as integer. In Section \ref{sec:additiveProperties}, it will be used to show some additive properties of the twist, which allow us to extend it to a much broader class of regions. 
%---------------------\input{tentative_new_facts}
%For what follows, we will need the concept of directional writhing number, or writhe, of a smooth simple closed space curve $\gamma$ in the direction of a nonzero vector $\vu$ (see \cite[\S 3]{writhingNumber}), which is given by $\Link(\gamma, \gamma + \delta \vu)$ for a sufficiently small $\delta > 0$ (provided $\gamma'(s)$ is never parallel to $\vu$): this is equal to the sum of the signs of the crossings in the orthogonal projection of $\gamma$ in the direction of $\vu$. 

\begin{lemma}
\label{lemma:intersectionOfSegments}
Let $\ell_0$ and $\ell_1$ be two segments of $\cR$, and let $\vu \in \RR^3$ be a vector such that $\|\vu\| < 1$. Then these two statements are equivalent:
\begin{enumerate}[label=\upshape(\roman*)]
	\item \label{item:intersection} There exist $s_0,s_1 \in [0,1]$ such that $\ell_0(s_0) - \ell_1(s_1) = \vu$.
	\item \label{item:conditions} There exist $(i,j) \in \{0,1\}^2$ and $a_0,a_1 \in (-1,1)$ such that
	$\ell_0(i) = \ell_1(j)$ and $\vu = a_0 \tv(\ell_0) + a_1 \tv(\ell_1)$ with $(-1)^i a_0 \geq 0$ and $(-1)^j a_1 \leq 0$.
\end{enumerate}
\end{lemma}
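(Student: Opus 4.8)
The plan is to prove the two implications of the equivalence separately. The direction \ref{item:conditions}$\Rightarrow$\ref{item:intersection} is immediate: given $(i,j)$, $a_0$, $a_1$ as in \ref{item:conditions}, I would set $s_0 = i + a_0$ and $s_1 = j - a_1$. The hypotheses $(-1)^i a_0 \ge 0$, $(-1)^j a_1 \le 0$ together with $a_0, a_1 \in (-1,1)$ force $s_0, s_1 \in [0,1]$, and since $\ell_0(i) = \ell_1(j)$ a one-line computation gives $\ell_0(s_0) - \ell_1(s_1) = a_0\tv(\ell_0) + a_1\tv(\ell_1) = \vu$. This argument does not care whether $\tv(\ell_0)$ and $\tv(\ell_1)$ are parallel.

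For \ref{item:intersection}$\Rightarrow$\ref{item:conditions}, the idea is to analyse the equation $\vu = \ell_0(s_0) - \ell_1(s_1)$ one coordinate at a time. Write $\tv(\ell_0) = \pm\vec{e}_{k_0}$ and $\tv(\ell_1) = \pm\vec{e}_{k_1}$, where $\vec{e}_1 = \ex$, $\vec{e}_2 = \ey$, $\vec{e}_3 = \ez$ and $k_0, k_1 \in \{1,2,3\}$, and use that every endpoint of $\ell_0$ and of $\ell_1$ has all coordinates in $\plshalf{\ZZ}$ while each coordinate of $\vu$ has absolute value at most $\|\vu\| < 1$. Three types of coordinate $m$ occur: if $m \notin \{k_0,k_1\}$, both segments are constant in the $m$-th coordinate, so that coordinate of $\vu$ is an integer of absolute value $<1$, hence $0$, and all four endpoints agree there; if $m$ equals exactly one of $k_0,k_1$, say $m = k_0 \ne k_1$, then $\ell_1$ is constant in coordinate $m$ while $\ell_0$ sweeps a unit interval with endpoints in $\plshalf{\ZZ}$, and the bound $<1$ forces that constant value to be one of the two endpoint-values of $\ell_0$, which pins down the relevant endpoint of $\ell_0$ and the sign of the $m$-th coordinate of $\vu$; and if $m = k_0 = k_1$, both segments sweep unit intervals with half-integer endpoints in coordinate $m$, and the bound $<1$ forces these two intervals to share an endpoint-value (their midpoints differ by an integer of absolute value $<2$).

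Assembling this data, in each coordinate one has located a value shared by an endpoint of $\ell_0$ and an endpoint of $\ell_1$; checking that these selections are mutually consistent produces indices $i,j \in \{0,1\}$ with $\ell_0(i) = \ell_1(j)$. Putting $\vec{w}_0 = \ell_0(1-i) - \ell_0(i)$ and $\vec{w}_1 = \ell_1(1-j) - \ell_1(j)$ (which are $\pm\tv(\ell_0)$ and $\pm\tv(\ell_1)$ respectively), the sign information from the last two coordinate types shows $\vu = b_0\vec{w}_0 - b_1\vec{w}_1$ for suitable $b_0, b_1 \in [0,1)$; re-expressing $\vec{w}_0, \vec{w}_1$ in terms of $\tv(\ell_0), \tv(\ell_1)$ then yields $a_0, a_1$ with $(-1)^i a_0 = b_0 \ge 0$, $(-1)^j a_1 = -b_1 \le 0$ and $|a_0| = b_0 < 1$, $|a_1| = b_1 < 1$, which is exactly \ref{item:conditions}.

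I expect the main obstacle to be the ``parallel'' coordinate case $k_0 = k_1$: there the decomposition $\vu = a_0\tv(\ell_0) + a_1\tv(\ell_1)$ is not unique, so one must exhibit a specific valid pair $(b_0,b_1)$ by splitting $\vu$ according to whether $\vec{w}_0 = \vec{w}_1$ or $\vec{w}_0 = -\vec{w}_1$, and this is also where the strictness $\|\vu\| < 1$ is genuinely consumed — it is what upgrades $a_0, a_1 \in [-1,1]$ to $a_0, a_1 \in (-1,1)$. Tracking which endpoint of each segment is the ``matched'' one (and hence the correct signs in $\vec{w}_0$, $\vec{w}_1$) also requires a little care in this case.
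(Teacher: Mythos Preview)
Your proposal is correct, and your direction \ref{item:conditions}$\Rightarrow$\ref{item:intersection} is exactly what the paper does. For \ref{item:intersection}$\Rightarrow$\ref{item:conditions}, however, you take a noticeably longer route than the paper. The paper's argument is a two-liner: it observes that if $\ell_0$ and $\ell_1$ are not adjacent then $\dist(\ell_0,\ell_1)\ge 1 > \|\vu\|$, so they must be adjacent; having picked $(i,j)$ with $\ell_0(i)=\ell_1(j)$, it simply expands
\[
\vu=\ell_0(s_0)-\ell_1(s_1)=(s_0-i)\,\tv(\ell_0)+(j-s_1)\,\tv(\ell_1),
\]
reads off $a_0=s_0-i$, $a_1=j-s_1$ with the correct signs, and invokes $\|\vu\|<1$ to get strict inequality on $|a_0|,|a_1|$.

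Your coordinate-by-coordinate analysis is effectively an explicit proof of the claim ``non-adjacent segments have distance $\ge 1$'', which the paper simply asserts. So your approach is more self-contained but heavier, and the ``consistency check'' you mention collapses to a triviality once one knows adjacency holds. The subtlety you correctly isolate in the parallel case $k_0=k_1$ (non-uniqueness of the decomposition, and how exactly $\|\vu\|<1$ forces $a_0,a_1\in(-1,1)$) is present in both arguments; the paper also handles it with a one-clause remark rather than a case split. In short: both arguments are valid, the paper's is shorter by leaning on the distance observation, and yours has the advantage of not leaving that observation unproved.
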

\begin{proof}
First, suppose \ref{item:intersection} holds. If $\ell_0$ and $\ell_1$ are not adjacent, then $\dist(\ell_0,\ell_1) \geq 1 > \|\vu\|$, which is a contradiction. Thus, $\ell_0$ and $\ell_1$ are adjacent, and thus $\ell_0(i) = \ell_1(j)$ for some $(i,j) \in \{0,1\}^2$: then 
\begin{align*}
\vu = \ell_0(s_0) - \ell_1(s_1) &= [\ell_0(i) + (s_0 - i) \tv(\ell_0)] - [\ell_1(j) + (s_1 - j)\tv(\ell_1)]\\
 &= (s_0 - i) \tv(\ell_0) + (j-s_1)\tv(\ell_1),
\end{align*}
that is, $\vu = a_0 \tv(\ell_0) + a_1 \tv(\ell_1)$ with $(i + a_0), (j - a_1) \in [0,1]$, which implies that $(-1)^i a_0 \geq 0$ and $(-1)^j a_1 \leq 0$. Also, since $\|\vu\| < 1$, we can take $a_0,a_1 \in (-1,1)$.

For the other direction, suppose \ref{item:conditions} holds, so that $\ell_0(i) = \ell_1(j)$ for some $(i,j) \in \{0,1\}^2$. Then setting  $s_0 = (i + a_0)$ and $s_1 = (j - a_1)$, we have $s_0,s_1 \in [0,1]$ and 
$\ell_0(i + a_0) - \ell_1(j - a_1) = [\ell_0(i) + a_0\tv(\ell_0)] - [\ell_1(j) - a_1\tv(\ell_1)] = \vu. $   
\end{proof}

For a map $\gamma:[0,n] \to \RR^3$ and a vector $\vu \in \RR^3$, let $(\gamma + \vu):[0,1] \to \RR^3: s \mapsto \gamma(s) + \vu$ denote the translation of $\gamma$ by $\vu$. 
%also, if $\ell$ is a segment of $\gamma$, $\ell + (a,b,c)$ will denote the translation of $\ell$ by $(a,b,c)$ (which is not necessarily a segment of $\cR$).

\begin{lemma}
\label{lemma:translationIntersection}
Let $\gamma$ be a curve of $\cR$, let $\beta \in \bB$, and let $\vu = a \vbeta_1 + b \vbeta_2 + c \vbeta_3 \in \RR^3$. If $\|\vu\| < 1$ and $abc \neq 0$, then the curves $\gamma$ and $\gamma + \vu$ are disjoint.  
\end{lemma}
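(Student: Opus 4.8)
The plan is to argue by contradiction: assume $\gamma$ and $\gamma+\vu$ have a common point and derive a constraint on $\vu$ incompatible with $abc\neq 0$.

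First I would unwind the definitions. Since $\gamma$ and $\gamma+\vu$ are finite unions of segments, a common point means there are segments $\ell_0,\ell_1\in\gamma$ and parameters $s_0,s_1\in[0,1]$ with $\ell_0(s_0)=\ell_1(s_1)+\vu$, i.e. $\ell_0(s_0)-\ell_1(s_1)=\vu$. Because $\|\vu\|<1$ by hypothesis, Lemma \ref{lemma:intersectionOfSegments} applies and gives (in particular) scalars $a_0,a_1\in(-1,1)$ with $\vu=a_0\tv(\ell_0)+a_1\tv(\ell_1)$.

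Next I would count nonzero coordinates. Each $\tv(\ell_i)$ lies in $\Phi=\{\pm\ex,\pm\ey,\pm\ez\}$, hence is supported on a single coordinate axis; therefore $\vu=a_0\tv(\ell_0)+a_1\tv(\ell_1)$ is supported on at most two coordinate axes (and on at most one if $\tv(\ell_0)=\pm\tv(\ell_1)$). On the other hand, $\beta=(\vbeta_1,\vbeta_2,\vbeta_3)\in\bB$ consists of linearly independent vectors of $\Phi$ (since $\det\beta=1$), so $\beta$ is a signed permutation of $(\ex,\ey,\ez)$; writing $\vu=a\vbeta_1+b\vbeta_2+c\vbeta_3$ with $abc\neq 0$ then shows that all three canonical coordinates of $\vu$ are nonzero. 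This contradiction completes the proof.

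There is essentially no serious obstacle here: the only points requiring care are checking that Lemma \ref{lemma:intersectionOfSegments} is legitimately applicable (the hypothesis $\|\vu\|<1$ is exactly what is assumed) and noting that the degenerate case $\ell_0=\ell_1$ is covered as well — in that case $\vu$ is even a scalar multiple of $\tv(\ell_0)$, supported on a single axis, so the contradiction is only sharper.
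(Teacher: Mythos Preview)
Your proof is correct and follows essentially the same approach as the paper's own proof: argue by contradiction, reduce to two segments, apply Lemma~\ref{lemma:intersectionOfSegments} to write $\vu=a_0\tv(\ell_0)+a_1\tv(\ell_1)$, and observe that such a vector has at most two nonzero coordinates in the basis $\beta$, contradicting $abc\neq 0$. Your added remarks on why $\beta\in\bB$ forces all three canonical coordinates of $\vu$ to be nonzero, and on the degenerate case $\ell_0=\ell_1$, are fine elaborations but not substantively different.
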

Notice that $\gamma + \vu$ is not a curve of $\cR$.
\begin{proof}
Suppose, by contradiction, that there exist $s_0, s_1 \in [0,n]$ (the domain of $\gamma$) such that $\gamma(s_0) = \gamma(s_1) + \vu.$ Let $k_0, k_1 \in \ZZ$ be such that $k_i \leq s_i \leq k_i + 1 \leq n$, and set $\tilde{s}_i = s_i - k_i$. Since $\gamma$ is a curve of $\cR$, $\ell_i = \gamma|_{[k_i,k_i+1]}$ are segments of $\cR$ such that $\ell_0(\tilde{s}_0) - \ell_1(\tilde{s}_1) = \gamma(s_0) - \gamma(s_1) = \vu$. By Lemma \ref{lemma:intersectionOfSegments}, $\vu = a_0 \tv(\ell_0) + a_1 \tv(\ell_1)$, which means that at least one of the three coordinates of $\vu$ is zero: this contradicts the fact that $abc \neq 0$. 
\end{proof}

Consider a simple closed curve $\gamma: I \to \RR^3$ and a vector $\vu \in \RR^3$, $\vu \neq 0$. Assume that there exists $\delta > 0$ such that for each $s \in (0,\delta]$, the curves $\gamma$ and $\gamma + s\vu$ are disjoint. Then define the \emph{directional writhing number} in the direction $\vu$ by $\Wr(\gamma, \vu) = \Link(\gamma, \gamma + \delta\vu)$ (see \cite[\S 3]{writhingNumber}). Since $\Link$ is symmetric and invariant by translations, $\Wr(\gamma,\vu) = \Wr(\gamma,-\vu)$.

\begin{lemma}
\label{lemma:directionalWrithingNumber}
Fix $\beta \in \bB$, and let $\gamma$ be a simple closed curve of $\cR$. If $0 < |a|,|b| < 1/N$, where $N$ is the $\vbeta_3$-length of $\cR$, then 
$\Wr(\gamma, \vbeta_3 + a\vbeta_1 + b\vbeta_2) = T^{\beta}_{a,b}(\gamma).$
\end{lemma}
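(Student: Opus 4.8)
The plan is to unwind both sides of the claimed identity into the same sum of crossing signs. Fix $\beta \in \bB$, write $\vu = \vbeta_3 + a\vbeta_1 + b\vbeta_2$ with $0 < |a|,|b| < 1/N$, and let $\gamma$ be a simple closed curve of $\cR$. First I would observe that $\|\delta\vu\| < 1$ and that $\delta\vu$ has all three $\beta$-coordinates nonzero for all sufficiently small $\delta > 0$; hence Lemma \ref{lemma:translationIntersection} applies and guarantees that $\gamma$ and $\gamma + \delta\vu$ are disjoint for every small $\delta > 0$, so $\Wr(\gamma,\vu) = \Link(\gamma, \gamma + \delta\vu)$ is well-defined. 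By the knot-theoretic definition of linking number, $\Link(\gamma, \gamma+\delta\vu)$ is half the sum of the signs of the crossings of the two curves under any projection with only transversal crossings; the natural choice is the orthogonal projection $\Pi^{\vbeta_3}$, since then $\Pi^{\vbeta_3}(\gamma)$ and $\Pi^{\vbeta_3}(\gamma + \delta\vu) = \Pi^{\vbeta_3}(\gamma) + \delta\Pi^{\vbeta_3}(\vu)$ are a curve and a small translate of it.

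The key step is to match up crossings of $(\gamma, \gamma+\delta\vu)$ under $\Pi^{\vbeta_3}$ with pairs of segments $(\ell_0,\ell_1)$ in $\gamma$ contributing to $T^{\beta}_{a,b}(\gamma)$. A crossing occurs at parameters $s_0$ (on $\gamma$) and $s_1$ (on $\gamma + \delta\vu$) precisely when $\gamma(s_0) - (\gamma(s_1) + \delta\vu) = c\vbeta_3$ for some $c \neq 0$ with $|c| \le N$, i.e. $\ell_0(\tilde s_0) - \ell_1(\tilde s_1) = \delta\vu + c\vbeta_3$, where $\ell_0,\ell_1$ are the segments of $\gamma$ containing $s_0,s_1$. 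Applying Lemma \ref{lemma:intersectionOfSegments} to the vector $\delta\vu + c\vbeta_3$ (whose norm I would bound below $1$ for $\delta$ small, after first noting $c$ ranges over a finite set so this is uniform) forces $\ell_0$ and $\ell_1$ to be adjacent with a shared endpoint, and expresses $\delta\vu + c\vbeta_3 = a_0\tv(\ell_0) + a_1\tv(\ell_1)$. Taking $\vbeta_3$-components shows $c = \vbeta_3\cdot(\ell_0(0) - \ell_1(0))$ up to sign; reconciling this with the $\vbeta_1$- and $\vbeta_2$-components and the sign constraints $(-1)^i a_0 \ge 0$, $(-1)^j a_1 \le 0$ pins down, for each such adjacent pair, whether a crossing actually occurs and with which sign — and this is exactly the bookkeeping already done in the proof of Lemma \ref{lemma:crossings} and Lemma \ref{lemma:transversalCrossings}. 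I expect to conclude that the signed crossing count of $(\gamma,\gamma+\delta\vu)$ under $\Pi^{\vbeta_3}$ equals $\sum_{\ell_0,\ell_1 \in \gamma}\big(\tau^{\beta}_{a,b}(\ell_0,\ell_1) + \tau^{\beta}_{a,b}(\ell_1,\ell_0)\big)$ — hmm, wait: more carefully, the translated projection $\Pi^{\vbeta_3}(\gamma) + \delta\Pi^{\vbeta_3}(\vu)$ is the same combinatorial picture as $\Pi^{\beta}_{a,b}(\gamma)$ against itself (the slant direction $\vbeta_3 + a\vbeta_1 + b\vbeta_2$ is exactly $\vu$), so Lemma \ref{lemma:transversalCrossings} identifies each transversal crossing's sign as $\tau^{\beta}_{a,b}(\ell_0,\ell_1) + \tau^{\beta}_{a,b}(\ell_1,\ell_0)$. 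Summing over unordered adjacent pairs and dividing by $2$ gives $\Link(\gamma,\gamma+\delta\vu) = \tfrac12\sum_{\ell_0 \neq \ell_1} \tau^{\beta}_{a,b}(\ell_0,\ell_1) = T^{\beta}_{a,b}(\gamma)$, using that $\tau^{\beta}_{a,b}(\ell,\ell) = 0$ (a segment is parallel to itself, so $\det = 0$), which is the assertion.

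The main obstacle, I expect, is the careful handling of the perturbation: one must check that replacing the exact translate $\gamma + \delta\vu$ by its orthogonal projection and comparing with the slanted projection $\Pi^{\beta}_{a,b}$ genuinely produce the same set of transversal crossings with the same signs — in other words, that no crossing is created or destroyed as $\delta \to 0^+$, and that crossings at endpoints of segments (where $\Pi^{\vbeta_3}(\gamma)$ has corners) do not cause trouble. Lemma \ref{lemma:translationIntersection} rules out self-intersections, and Lemma \ref{lemma:transversalCrossings} guarantees all crossings are transversal and interior to the segments (since $s_0,s_1 \in (0,1)$ there), so the count is stable; I would spell out that for $\delta$ smaller than some threshold depending only on $N$, the correspondence between crossings of $(\gamma,\gamma+\delta\vu)$ and nonzero values of $\tau^{\beta}_{a,b}$ on pairs of segments of $\gamma$ is a bijection respecting signs. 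Once this is established, the identity is immediate.
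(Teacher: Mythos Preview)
Your proposal contains a genuine error in the first half and does not recover from it. You write that the crossing vector $\delta\vu + c\vbeta_3$ has norm below $1$ for small $\delta$, so that Lemma~\ref{lemma:intersectionOfSegments} forces $\ell_0$ and $\ell_1$ to be adjacent. But $\delta\vu + c\vbeta_3 = \delta a\,\vbeta_1 + \delta b\,\vbeta_2 + (c+\delta)\vbeta_3$, and the quantity $c$ is essentially the $\vbeta_3$-height difference between the two segments, which can be any integer up to $N$ in absolute value; its norm is close to $|c|$, not close to $0$. Thus Lemma~\ref{lemma:intersectionOfSegments} does \emph{not} apply, and your conclusion that only adjacent segments contribute crossings is false: most crossings between $\gamma$ and $\gamma+\delta\vu$ come from genuinely distant pairs of segments whose $\Pi^{\vbeta_3}$-images intersect. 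Your final line, ``summing over unordered adjacent pairs,'' inherits this mistake.

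After your ``hmm, wait'' you suggest that the picture of $\Pi^{\vbeta_3}(\gamma)$ against its small translate is ``the same combinatorial picture'' as the self-crossings of $\Pi^{\beta}_{a,b}(\gamma)$, but you do not establish this, and it is not immediate: the projection $\Pi^{\vbeta_3}$ collapses segments parallel to $\vbeta_3$ to points and allows parallel horizontal segments to overlap non-transversally, which is precisely the phenomenon the slanted projection $\Pi^{\beta}_{a,b}$ was introduced to avoid. Sorting out which of these overlaps become honest transversal crossings between $\gamma$ and $\gamma + \delta\vu$, and matching their signs with $\tau^{\beta}_{a,b}$, is real work that your outline does not do. The paper bypasses all of this by smoothing the corners of $\gamma$ to obtain a $C^\infty$ curve $\tilde\gamma$ (the smoothing is supported in small arcs around the vertices, where Lemma~\ref{lemma:transversalCrossings} guarantees there are no crossings), invoking the classical fact that for a smooth curve the directional writhe along $\vu$ equals the signed self-crossing count under projection along $\vu$, and noting that both the crossing count and the linking number with the translate are unchanged by the smoothing. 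This avoids any direct bookkeeping of near-diagonal crossings.
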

\begin{proof}
We would like to use the fact that the sums of the signs of the crossings of the orthogonal projection of a smooth curve in the direction of a vector $\vu$ equals its directional writhing number (in the direction of $\vu$): this is essentially what we're trying to prove for our curve, except that $\Pi^{\beta}_{a,b}$ is not the orthogonal projection and that $\gamma$ is not a smooth curve. However, these difficulties can be avoided, as the following paragraphs show.

The orthogonality of the projection makes no real difference, because the orthogonal projection in the direction of $(a,b,1)$ has the same kernel as $\Pi^{\beta}_{a,b}$, so the crossings occur in the same positions (and clearly have the same signs). Therefore, by Lemma \ref{lemma:transversalCrossings}, $T^{\beta}_{a,b}(\gamma)$ equals the sums of the signs of the crossings of the aforementioned orthogonal projection.

For the smoothness of the curve, there is a finite number of points where $\gamma$ is not smooth: precisely, the set of $k \in \ZZ$ such that the two segments of $\gamma$ that intersect at $\gamma(k)$ are not parallel. To simplify notation, let $[0,n]$ be the domain of $\gamma$, and for $k=0,1,\ldots,n-1$ let $\ell_k$ be the segment of $\gamma$ such that $\ell_k(0) = \gamma(k)$ (notice that $\ell_k(1) = \gamma(k+1)$). It is also convenient to set $\ell_{-1} := \ell_{n-1}$, so that $\ell_{-1}(1) = \ell_{n-1}(1) = \gamma(n) = \gamma(0)$.

Recall from Lemma \ref{lemma:transversalCrossings} that every crossing in the projections occur in the interiors of the segments: since the number of segments is finite, we can pick $0 < \epsilon < 1/2$ sufficiently small so that 
$\Pi^{\beta}_{a,b}(\gamma(U_{\epsilon}))$ 
contains no crossings, where 
$U_\epsilon = [0,n] \cap \left(\bigcup_{k \in \ZZ} [k - \epsilon, k + \epsilon]\right).$

Let $\phi_1: \RR \to \RR$ be a nondecreasing $C^{\infty}$ function such that $\phi_1(t) = 0$ whenever $t \leq -\epsilon$ and $\phi_1(t) = t$ whenever $t \geq \epsilon$.
%$$\phi_1(t) = \begin{cases}
%0, &t \leq -\epsilon;\\
%t, &t \geq \epsilon,
%\end{cases}$$
Let $\phi_0(t) = t + \epsilon - \phi_1(t)$.
Consider the smooth simple closed curve of $\RR^3$, $\tilde{\gamma}:[0,n] \to \RR^3$, given by
$$ \tilde{\gamma}(s) = 
\begin{cases}
\gamma(k - \epsilon) + \phi_0(s-k)\tv(\ell_{k-1}) + \phi_1(s-k)\tv(\ell_k), &s \in (k-\epsilon, k+ \epsilon);\\
\gamma(s), &s \notin U_{\epsilon}.
\end{cases}$$
To simplify notation, write $\vw = \vbeta_3 + a\vbeta_1 + b\vbeta_2$ and fix $\delta < 1/\sqrt{1 + a^2 + b^2}$, so that $\|\delta\vw\| < 1$. By Lemma \ref{lemma:translationIntersection}, $\gamma$ and $\gamma + s\vu$ are disjoint whenever $s \in (0,\delta]$.
  
Clearly, the sums of the signs of the crossings in the orthogonal projection of $\tilde{\gamma}$ equals that of $\gamma$; moreover, $\Link(\tilde{\gamma},\tilde{\gamma} +  s\vw) = \Link(\gamma,\gamma +  s\vw)$ for sufficiently small $s > 0$. Since $\tilde{\gamma}$ is smooth, $T^{\beta}_{a,b}(\gamma) = \Wr(\tilde{\gamma},\vw) = \Link(\gamma,\gamma +  s\vw) = \Wr(\gamma,\vw)$. 
%
%
%Now, if $\delta_0, \delta_1 > 0$ are such that $\delta_i < 1/\sqrt{1 + a^2 + b^2}$, by Lemma \ref{lemma:translationIntersection}, there is a homotopy of the space $\RR^3 \setminus \gamma$ taking $\gamma + \delta_0\vw$ to $\gamma + \delta_1\vw$, so that $\Link(\gamma,\gamma + \delta_0\vw) = \Link(\gamma,\gamma + \delta_1\vw)$. Therefore, the result holds for any $0 < \delta < 1/\sqrt{1 + a^2 + b^2}$.   
\end{proof}

The following rather technical Lemma will be used in the proof of Lemma \ref{lemma:differenceOfLinkingNumbers}:

\begin{lemma}
\label{lemma:projectionIntersections}
Let $\beta \in \bB$, and let $\ell_0$ and $\ell_1$ be two segments of a region $\cR$ whose $\vbeta_3$-length is $N$. Let $\vu = b \vbeta_2 + c \vbeta_3$ with
$bc \neq 0$ and $b^2 + c^2 < 1$.
Let $0 < \epsilon < \min\left(\frac{|b|}{N + |c|}, \frac{1 - |b|}{N + |c|}\right)$. 

If, for some $s_0, s_1 \in [0,1]$,
$\Pi^{\beta}_{\epsilon,\epsilon}(\ell_0(s_0) - \ell_1(s_1) - \vu) = 0$,
%$\Pi^{\beta}_{\epsilon,\epsilon}(\ell_0) \cap \Pi^{\beta}_{\epsilon,\epsilon}(\ell_1 + \vu) \neq \emptyset$
then $\ell_0$ and $\ell_1$ are not parallel, and $s_0,s_1 \in (0,1)$. 
\end{lemma}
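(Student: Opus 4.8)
The plan is to rewrite the hypothesis in coordinates relative to the basis $\beta$, use the $\vbeta_3$-length of $\cR$ to bound the size of the perturbation, and then split into cases according to the directions of $\tv(\ell_0)$ and $\tv(\ell_1)$. First I would note that $\ker \Pi^{\beta}_{\epsilon,\epsilon}$ is the line spanned by $\vbeta_3 + \epsilon\vbeta_1 + \epsilon\vbeta_2$, so the hypothesis yields a scalar $\lambda$ with
\[
\ell_0(s_0) - \ell_1(s_1) = \vu + \lambda(\vbeta_3 + \epsilon\vbeta_1 + \epsilon\vbeta_2) = (\lambda\epsilon)\vbeta_1 + (b+\lambda\epsilon)\vbeta_2 + (c+\lambda)\vbeta_3 .
\]
Since $\ell_0(s_0),\ell_1(s_1) \in \cR$, the $\vbeta_3$-component is bounded by $N$: $|c+\lambda| \le N$, hence $|\lambda| \le N+|c|$, and therefore $|\lambda\epsilon| < \min(|b|,1-|b|) \le \tfrac12$ by the hypothesis on $\epsilon$. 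In particular $0 < |b+\lambda\epsilon| < 1$ and $|\lambda\epsilon| < 1$, while $b^2+c^2 < 1$ together with $bc \ne 0$ gives $0 < |b|,|c| < 1$. I would also record that $\ell_0(0)-\ell_1(0) \in \ZZ^3$ has integer coordinates in the orthonormal basis $\beta$ and that each $\tv(\ell_i)$ is $\pm$ one of $\vbeta_1,\vbeta_2,\vbeta_3$; hence the $\vbeta_m$-component of $\ell_0(s_0)-\ell_1(s_1)$ is an integer, plus a summand $\pm s_0$ present exactly when $\tv(\ell_0)\parallel\vbeta_m$, plus a summand $\mp s_1$ present exactly when $\tv(\ell_1)\parallel\vbeta_m$.

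Next I would prove that $\ell_0$ and $\ell_1$ are not parallel. If $\tv(\ell_0)\parallel\vbeta_i$ and $\tv(\ell_1)\parallel\vbeta_i$, then the $\vbeta_m$-component above is an integer for every $m\ne i$: taking $m=2$ when $i\ne 2$ contradicts $0<|b+\lambda\epsilon|<1$, and when $i=2$ the component at $m=1$ shows $\lambda\epsilon\in\ZZ$ with $|\lambda\epsilon|<1$, so $\lambda=0$, whence the component at $m=3$ makes $c$ an integer — again impossible. So write $\tv(\ell_0)=\sigma_0\vbeta_{i_0}$, $\tv(\ell_1)=\sigma_1\vbeta_{i_1}$ with $i_0\ne i_1$, and let $k$ be the remaining index in $\{1,2,3\}$. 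The $\vbeta_k$-component receives no contribution from either $\tv(\ell_i)$, so it equals an integer $n_k$: $k=2$ contradicts $0<|b+\lambda\epsilon|<1$ and so cannot happen; $k=1$ forces $\lambda\epsilon=n_1\in\ZZ$, hence $\lambda=0$; and $k=3$ gives $\lambda=n_3-c\ne0$ (as $0<|c|<1$).

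To conclude that $s_0,s_1\in(0,1)$, I would look at the $\vbeta_{i_0}$-component equation, which reads $n+\sigma_0 s_0=\rho_0$ with $n=\vbeta_{i_0}\cdot(\ell_0(0)-\ell_1(0))\in\ZZ$ and $\rho_0$ equal to $\lambda\epsilon$, $b+\lambda\epsilon$, or $c+\lambda$ according as $i_0=1,2,3$. Inspecting which of these survives the previous step (for instance $i_0=3$ forces $k\ne3$, hence $\lambda=0$ and $\rho_0=c$), one finds $\rho_0\in(-1,1)\setminus\{0\}$ in every case. Since $s_0\in[0,1]$, the relation $|n-\rho_0|=s_0\le 1$ forces $n\in\{0,\pm1\}$, and a short check of these three sub-cases (using $0<|\rho_0|<1$) gives $s_0\in(0,1)$; the symmetric argument on the $\vbeta_{i_1}$-component gives $s_1\in(0,1)$.

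The main obstacle is the case $k=3$. There $\ell_0(s_0)-\ell_1(s_1)$ can be long in the $\vbeta_3$-direction, so it may have norm $\ge 1$ and Lemma \ref{lemma:intersectionOfSegments} — which would otherwise settle matters at once, as in the proof of Lemma \ref{lemma:transversalCrossings} — does not apply; one must instead control the $\vbeta_1$- and $\vbeta_2$-components by hand, and this is precisely where the two explicit inequalities on $\epsilon$ are consumed, namely to guarantee $0<|b+\lambda\epsilon|<1$.
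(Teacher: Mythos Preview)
Your proof is correct and follows essentially the same approach as the paper: both rewrite the projection condition in $\beta$-coordinates, use the $\vbeta_3$-length bound to control $|\lambda|$, and then exploit the integrality of the components orthogonal to $\tv(\ell_i)$ together with the inequalities $0<|b+\lambda\epsilon|<1$ and $|\lambda\epsilon|<1$. The paper's version is organized a bit more economically: rather than treating ``not parallel'' and ``$s_0,s_1\in(0,1)$'' in two stages, it argues by contradiction that either failure forces at least two of the three coordinates $\alpha_i=\vbeta_i\cdot(\ell_0(s_0)-\ell_1(s_1))$ to be integers, and then rules that out in a single pass.
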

\begin{proof}
Suppose $\Pi^{\beta}_{\epsilon,\epsilon}(\ell_0(s_0) - \ell_1(s_1) - \vu) = 0$. Let $\alpha_i = \vbeta_i \cdot (\ell_0(s_0) - \ell_1(s_1)), i=1,2,3$, so that $\alpha_1 = \epsilon (\alpha_3 - c), \alpha_2 - b = \epsilon (\alpha_3 - c)$.
 
Suppose, by contradiction, that at least one of these things occurs: 
\begin{enumerate}[label=(\roman*), topsep = 0.1px, itemsep = 0.1px]
	\item \label{item:parallel} $\ell_0$ and $\ell_1$ are parallel;
	\item \label{item:borderline} $s_0 \in \{0,1\}$ or $s_1 \in \{0,1\}$.
\end{enumerate}
We claim that at least two of the three $\alpha_i$'s are integers. To see this, suppose first \ref{item:parallel}, so that $\tv(\ell_0), \tv(\ell_1) \parallel \vbeta_i$, so that for $j \neq i$, $\alpha_j = \vbeta_j \cdot (\ell_0(s_0) - \ell_1(s_1)) \in \ZZ$. On the other hand, if \ref{item:borderline} holds, say $s_1 \in \{0,1\}$, then $\ell_1(s_1) \in \plshalf{\ZZ}$ and $\tv(\ell_0) \parallel \vbeta_i$, so that, again, for $j \neq i$, $\alpha_j \in \ZZ$.

We claim that $\alpha_2 \notin \ZZ$. In fact, if $\alpha_2 \in \ZZ$ then we would have
$|\alpha_2| = |b + \epsilon(\alpha_3 - c)| < |b| + \frac{1 - |b|}{N + |c|}(N + |c|) = 1$, so that $\alpha_2 = 0$ and $|b| = \epsilon|\alpha_3 - c| < \frac{|b|}{N + |c|}(N + |c|)$, which is a contradiction.

Therefore, we must have $\alpha_1,\alpha_3 \in \ZZ.$ Then $|\alpha_1| = |\epsilon(\alpha_3 - c)| < |b| < 1$, so $\alpha_1 = 0 = \alpha_3 - c$. Thus $c = \alpha_3 \in \ZZ$ but $|c| \in (0,1)$, which is a contradiction.
%
%
%We claim that $y_0 - y_1 \notin \ZZ$. In fact, if that were the case then we would have
%$|y_0 - y_1| = |b + \epsilon(z_0 - z_1 - c)| < |b| + \frac{1 - |b|}{N + |c|}(N + |c|) = 1$, thus $y_0 - y_1 = 0$ and $|b| = \epsilon|z_0 - z_1 - c| < \frac{|b|}{N + |c|}(N + |c|)$, which is a contradiction.
%
%Therefore, $x_0 - x_1 \in \ZZ$ and $z_0 - z_1 \in \ZZ$. Then $|x_0 - x_1| = |\epsilon(z_0 - z_1 - c)| < |b| < 1$, so $x_0 - x_1 = 0 = z_0 - z_1 - c$. Thus $c = z_0 - z_1 \in \ZZ$ but $|c| \in (0,1)$, which is a contradiction.
\end{proof}

The following definition is specific for Lemmas \ref{lemma:differenceOfLinkingNumbers} and \ref{lemma:writheDifference}. Let $\gamma:[0,n] \to \RR^3$ be a simple closed curve of a region $\cR$ and $\beta \in \bB$. For $k=0,1,\ldots, n-1$, set $\ell_k = \gamma|_{[k,k+1]}$, and set also $\ell_n = \ell_0$. Finally, we define 
$$\eta^{\beta}_{\gamma}(k) = 
\begin{cases}
1, &(\tv(\ell_k), \tv(\ell_{k+1})) = (\vbeta_2,\vbeta_3) \text{ or } (-\vbeta_3,-\vbeta_2);\\
-1, & (\tv(\ell_k), \tv(\ell_{k+1})) = (-\vbeta_2,-\vbeta_3) \text{ or } (\vbeta_3,\vbeta_2);\\
0, &\text{otherwise.}
\end{cases}
$$

\begin{lemma}
\label{lemma:differenceOfLinkingNumbers}
Let $\gamma:[0,n] \to \RR^3$ be a simple closed curve of a region $\cR$. For $k=0,1,\ldots, n-1$, set $\ell_k = \gamma|_{[k,k+1]}$, and set also $\ell_n = \ell_0$; for shortness, write $\tv_k = \tv(\ell_k)$. Then if $\beta \in \bB$ and $a,b,c > 0$, then
$$
\Wr(\gamma, a \vbeta_1 + b \vbeta_2 + c \vbeta_3) - \Wr(\gamma, - a \vbeta_1 + b \vbeta_2 + c \vbeta_3) = \sum_{0 \leq k < n} \eta^{\beta}_{\gamma}(k).
$$
\end{lemma}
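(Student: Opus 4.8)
The plan is to rephrase both directional writhing numbers as linking numbers of $\gamma$ with a pushed-off copy, and then to compute their difference as the signed number of points where $\gamma$ meets the annulus swept out as the push-off direction is rotated from $-a\vbeta_1+b\vbeta_2+c\vbeta_3$ to $a\vbeta_1+b\vbeta_2+c\vbeta_3$. This signed count will localize at the vertices of $\gamma$ and turn out to equal $\sum_k\eta^\beta_\gamma(k)$.

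First I would set $\vw_t = t\vbeta_1 + b\vbeta_2 + c\vbeta_3$ and fix $\delta>0$ small enough that $\|\delta\vw_t\|<1$ for all $t\in[-a,a]$. For $t\neq 0$ all three coordinates of $s\vw_t$ are nonzero, so Lemma~\ref{lemma:translationIntersection} shows that $\gamma$ and $\gamma+s\vw_t$ are disjoint for every $s\in(0,\delta]$; hence $\Wr(\gamma,\vw_{\pm a})=\Link(\gamma,\gamma+\delta\vw_{\pm a})$ is well defined, and, more importantly, $t\mapsto\Link(\gamma,\gamma+\delta\vw_t)$ is locally constant on $[-a,0)\cup(0,a]$. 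Writing $W_-$ and $W_+$ for its constant values on $[-a,0)$ and on $(0,a]$, this gives $\Wr(\gamma,\vw_a)-\Wr(\gamma,\vw_{-a})=W_+-W_-$. (The same reasoning, applied to paths varying one coordinate at a time over nonzero values, shows that $\Wr(\gamma,\cdot)$ is constant on each open coordinate octant, which explains why the right-hand side of the lemma depends only on $\beta$ and $\gamma$.)

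Next I would identify $W_+-W_-$ with an intersection number. The loops $\gamma+\delta\vw_a$ and $\gamma+\delta\vw_{-a}$ cobound the singular annulus $A\colon[0,n]\times[-a,a]\to\RR^3$, $(s,t)\mapsto\gamma(s)+\delta\vw_t$, and $\gamma$ is disjoint from $\partial A$; by the standard fact that the linking number of a cycle with a null\-homologous cycle equals its intersection number with a bounding chain, $W_+-W_-=\Link(\gamma,\partial A)$ equals the signed count of transverse intersections of $\gamma$ with $A$. A point of $A$ has the form $\gamma(s)+\delta\vw_t$, so $\gamma$ meets $A$ exactly at solutions of $\gamma(s')=\gamma(s)+\delta\vw_t$; by Lemma~\ref{lemma:translationIntersection} this forces $t=0$, and since $\gamma$ is simple the segments $\ell,\ell'$ of $\gamma$ carrying $\gamma(s),\gamma(s')$ must be consecutive, say $\{\ell,\ell'\}=\{\ell_k,\ell_{k+1}\}$. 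Feeding $\vu=\delta\vw_0=\delta b\,\vbeta_2+\delta c\,\vbeta_3$ (of small norm, with $b,c>0$) into Lemma~\ref{lemma:intersectionOfSegments} shows that such a solution exists precisely when $(\tv_k,\tv_{k+1})$ is one of $(\vbeta_2,\vbeta_3)$, $(\vbeta_3,\vbeta_2)$, $(-\vbeta_2,-\vbeta_3)$, $(-\vbeta_3,-\vbeta_2)$ --- i.e.\ exactly when $\eta^\beta_\gamma(k)\neq 0$ --- and that it is then unique, lying in the interiors of the two segments. Transversality holds because $\tv$ of $\gamma$ at that point together with the two tangents of $A$ (namely $\tv$ of $\gamma+\delta\vw_0$, which is the other of $\tv_k,\tv_{k+1}$, and $\partial_t A\parallel\vbeta_1$) are linearly independent.

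Finally I would compute the sign of each such intersection point: it is a determinant of three vectors drawn from $\{\pm\vbeta_1,\pm\vbeta_2,\pm\vbeta_3\}$, and a direct inspection of the four cases above shows that, once the orientation of $A$ and the convention relating $\Link(\gamma,\partial A)$ to $\gamma\cdot A$ are fixed compatibly, it equals $\eta^\beta_\gamma(k)$ in every case. Summing over $k$ (vertices with $\eta^\beta_\gamma(k)=0$ produce no intersection point) then gives $\Wr(\gamma,a\vbeta_1+b\vbeta_2+c\vbeta_3)-\Wr(\gamma,-a\vbeta_1+b\vbeta_2+c\vbeta_3)=W_+-W_-=\sum_{0\le k<n}\eta^\beta_\gamma(k)$. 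The main obstacle is purely bookkeeping: pinning down consistently the orientation of $A$, the sign in ``linking $=$ intersection with a bounding chain'', and the sign in the definition of $\eta^\beta_\gamma$, so that the four-case check comes out with the correct global sign; it suffices to treat one representative case in full and invoke symmetry for the rest. A secondary point is the mild PL/smoothness issue in the ``linking $=$ intersection'' step, which can be dispatched either by smoothing $\gamma$ as in the proof of Lemma~\ref{lemma:directionalWrithingNumber}, or --- more combinatorially --- by fixing a generic projection and comparing the crossing diagrams of $\gamma$ with $\gamma+\delta\vw_t$ for $t$ slightly positive versus slightly negative; this is where Lemma~\ref{lemma:projectionIntersections} enters, ensuring that the crossings with the translate $\gamma+\delta\vw_0$ are transverse and occur in segment interiors, so the crossing count changes only at the vertices enumerated above.
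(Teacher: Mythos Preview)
Your proposal is correct and takes a genuinely different route from the paper. The paper argues entirely through a fixed slanted projection $\Pi^\beta_{\epsilon,\epsilon}$: it uses Lemma~\ref{lemma:projectionIntersections} to ensure that, for small $a$, the set of crossings between $\Pi^\beta_{\epsilon,\epsilon}(\gamma)$ and $\Pi^\beta_{\epsilon,\epsilon}(\gamma+\vu(s))$ is independent of $s\in[-a,a]$, so the only contributions to the linking-number difference come from crossings that \emph{change sign} as $s$ passes through $0$; Lemma~\ref{lemma:intersectionOfSegments} then localizes these sign changes at consecutive pairs $(\ell_k,\ell_{k+1})$ with $\{\tv_k,\tv_{k+1}\}=\{\vbeta_2,\vbeta_3\}$ or $\{-\vbeta_2,-\vbeta_3\}$, and a picture check (Figure~\ref{fig:cases_YDimers}) reads off the sign as $\eta^\beta_\gamma(k)$. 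Your approach instead sweeps the push-off direction and identifies the difference with the intersection number of $\gamma$ with the swept annulus $A$, then localizes the intersection points via Lemmas~\ref{lemma:translationIntersection} and~\ref{lemma:intersectionOfSegments}. This is more conceptual and avoids choosing an auxiliary $\epsilon$, at the cost of importing the ``linking $=$ intersection with a Seifert chain'' principle and the PL transversality/smoothing step you flag. It is worth noting that the combinatorial alternative you sketch in your last sentence---comparing crossing diagrams for $t$ slightly positive versus slightly negative under a generic projection, with Lemma~\ref{lemma:projectionIntersections} guaranteeing transversality---\emph{is} essentially the paper's proof; so your write-up already contains the paper's argument as a fallback. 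Either way, the orientation bookkeeping you single out is indeed the only delicate point, and your suggestion to verify one representative case in full and appeal to symmetry is exactly how the paper handles it.
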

\begin{proof}
We may assume that $a^2 + b^2 + c^2 < 1$.
Let $0 < \epsilon < \min\left(\frac{|b|}{N + |c|}, \frac{1 - |b|}{N + |c|}\right),$ and set $\vu(s) = s \vbeta_1 + b \vbeta_2 + c \vbeta_3$.
By Lemma \ref{lemma:translationIntersection}, $\Link(\gamma, \gamma + \vu(a))$ depends only on the signs of $a$, $b$ and $c$. Therefore, we may, without loss of generality, assume that $a > 0$ is sufficiently small such that for every $s \in [-a,a]$ and every $i,j \in \{0,1, \ldots, n\}$,
$$\Pi^{\beta}_{\epsilon,\epsilon}(\ell_i) \cap \Pi^{\beta}_{\epsilon,\epsilon}(\ell_j + \vu(s)) \neq \emptyset \Leftrightarrow \Pi^{\beta}_{\epsilon,\epsilon}(\ell_i) \cap \Pi^{\beta}_{\epsilon,\epsilon}(\ell_j + \vu(0)) \neq \emptyset$$
(this is possible by Lemma \ref{lemma:projectionIntersections}). Therefore, clearly $\Link(\gamma, \gamma + \vu(a)) - \Link(\gamma, \gamma + \vu(-a))$ equals the number of pairs of segments $\ell_i, \ell_j$ such that $\Pi^{\beta}_{\epsilon,\epsilon}(\ell_i) \cap \Pi^{\beta}_{\epsilon,\epsilon}(\ell_j + \vu(s)) \neq \emptyset$ for every $s \in [-a,a]$ and such that the crossing changes its sign as $s$ goes from $-a$ to $a$. Now a crossing may only change its sign if $\ell_i \cap (\ell_j + \vu(s)) \neq \emptyset$ for some $s$: by Lemma \ref{lemma:translationIntersection}, this can only happen if $s = 0$. 

By Lemma \ref{lemma:intersectionOfSegments}, $\ell_i \cap (\ell_j + \vu(0)) \neq \emptyset$ if and only if for some $m_i,m_j \in \{0,1\}$, $\ell_i(m_i) = \ell_j(m_j)$ and
$\vu(0) = b \vbeta_2 + c \vbeta_3 = a_i \tv(\ell_i) + a_j \tv(\ell_j)$, with $(-1)^{m_i} a_i \geq 0, (-1)^{m_j} a_j \leq 0$.
Since $\ell_i$ and $\ell_j$ are segments of the simple curve $\gamma$, they can only be adjacent if, for some $k$, $\{\ell_i, \ell_j\} = \{\ell_k, \ell_{k+1}\}$. Now, $\ell_{k+1}(0) = \ell_k(1)$, so that 
$(0,b,c) = a_0\tv(\ell_{k+1}) - a_1\tv(\ell_k)$ with either $a_0,a_1 \geq 0$ or $a_0,a_1 \leq 0$ (depending on which is $\ell_i$ and which is $\ell_j$). Since $b,c > 0$, this implies that $\{\tv(\ell_k), \tv(\ell_{k+1})\} = \{\vbeta_2,\vbeta_3\} \text{ or } \{-\vbeta_2, -\vbeta_3\}$ and, therefore, $\eta^{\beta}_{\gamma}(k) = \pm 1$.
\begin{figure}[ht]
\centering
\def\svgwidth{0.35\columnwidth}
\subfloat[$\eta^{\beta}_{\gamma}(k) = 1$.]{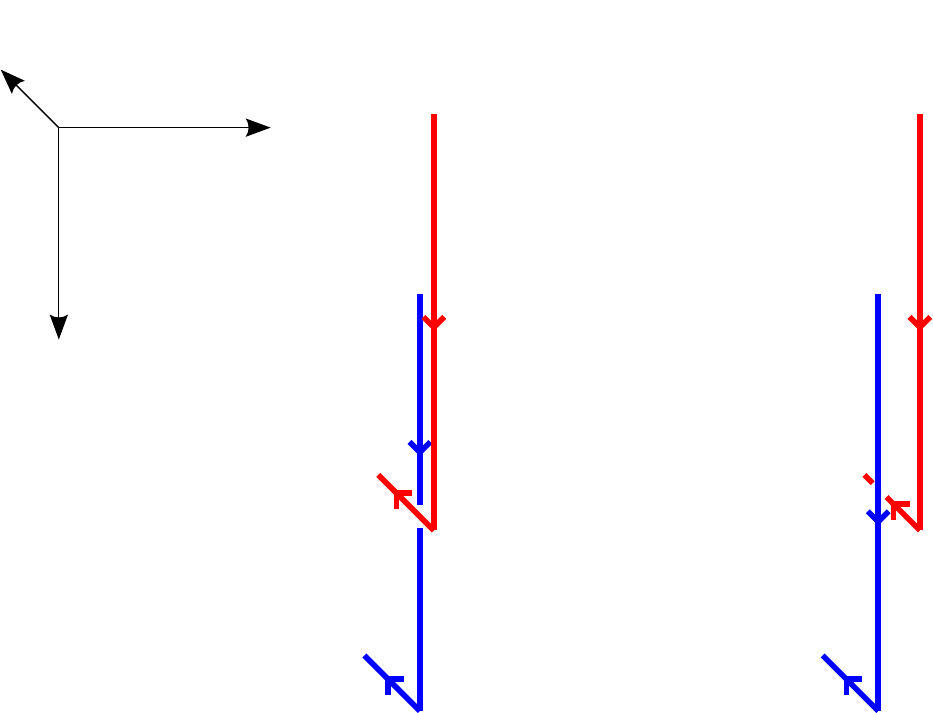\label{subfig:crossings_jk}} \qquad \qquad \qquad \qquad 
\def\svgwidth{0.35\columnwidth}
\subfloat[$\eta^{\beta}_{\gamma}(k) = -1$.]{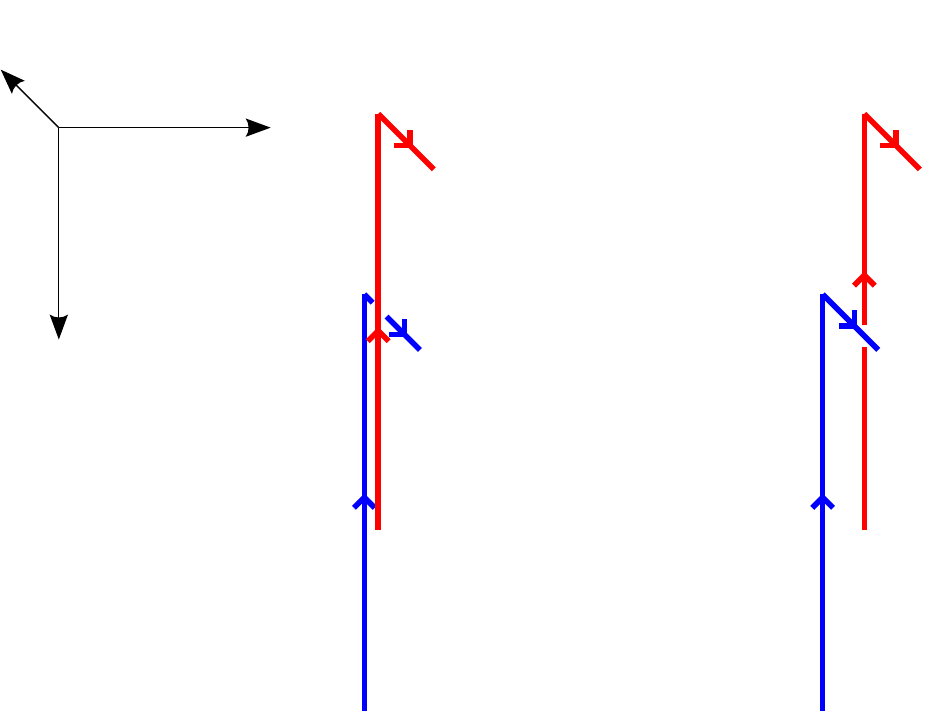\label{subfig:crossings_kj}}
\caption{Illustration of the crossings $\Pi^{\beta}_{\epsilon,\epsilon}(\gamma) \cap \Pi^{\beta}_{\epsilon,\epsilon}(\gamma+s\vbeta_1 + b \vbeta_2 + c \vbeta_3)$ for $s \in [-a,a]$. Notice that simultaneously switching the orientations of both segments does not change the signs of the crossings.}%
\label{fig:cases_YDimers}%
\end{figure}

We now analyze each of the four possible cases for $(\tv(\ell_k), \tv(\ell_{k+1}))$ (as an ordered pair). When $(\tv(\ell_k), \tv(\ell_{k+1})) = (\vbeta_2,\vbeta_3) \text{ or } (-\vbeta_3,-\vbeta_2)$, so that $\eta^{\beta}_{\gamma}(k) = 1,$ we see a situation as illustrated in Figure \ref{subfig:crossings_jk} (perhaps with both orientations reversed): when $s > 0$, we have a positive crossing; when $s < 0$, we have a negative crossing.
Figure \ref{subfig:crossings_kj} illustrates (up to orientation) the case $(\tv(\ell_k), \tv(\ell_{k+1})) = (-\vbeta_2,-\vbeta_3) \text{ or } (\vbeta_3,\vbeta_2)$ ($\eta^{\beta}_{\gamma}(k) = -1)$: negative crossing for $s > 0$, and positive crossing for $s < 0$. These observations yield the result.
\end{proof}

%-------------------------
\section{Writhe formula for the twist}
\label{sec:writheInterpretation}
%{\color[rgb]{0,0,1} Should we comment on the fact that we will do things for the $\ez$-pretwists of $\ez$-cylinders, which we will ultimately see, via the proof of Proposition \ref{prop:equalTwistsMultiplex}, that we can simply call the twist?}
%{\color[rgb]{0,0,1} Pay attention to the fact that axes of cylinders are in $\Delta$, not in $\Phi$. Maybe it would be a good idea to call the axis of the cylinder $\vw \in \Delta$ (consistently), as we usually use the letter $\vu \in \Phi$}
 
Now that the groundwork is done, we set out to obtain a new formula for the twist of pseudocylinders of even depth (we work with pseudocylinders because the hypothesis of simple connectivity will not play any role). Pseudocylinders of even depth have the advantage of always admitting a tiling such that all dimers are parallel to its axis: for a $\vw$-pseudocylinder $\cR$ ($\vw \in \Delta$) with even depth, let $\tbase = \tbase(\cR)$ denote the tiling such that every dimer is parallel to $\vw$ (see Figure \ref{fig:auxiliaryLinesExample}). Not only does this tiling trivially satisfy $T^{\vw}(\tbase) = 0$, but also for any segment $\ell$ of $\cR$ and any dimer $\ell_0 \in \tbase$ we have $\tau^{\vw}(\ell_0,\ell) = \tau^{\vw}(\ell, \ell_0) = 0$. This allows for a direct interpretation of the twist via a set of curves, which, in particular, allows us to show that it is an integer.

\begin{figure}[ht]%
\centering
\subfloat[The tiling $t$.]{\includegraphics[width=0.45\columnwidth]{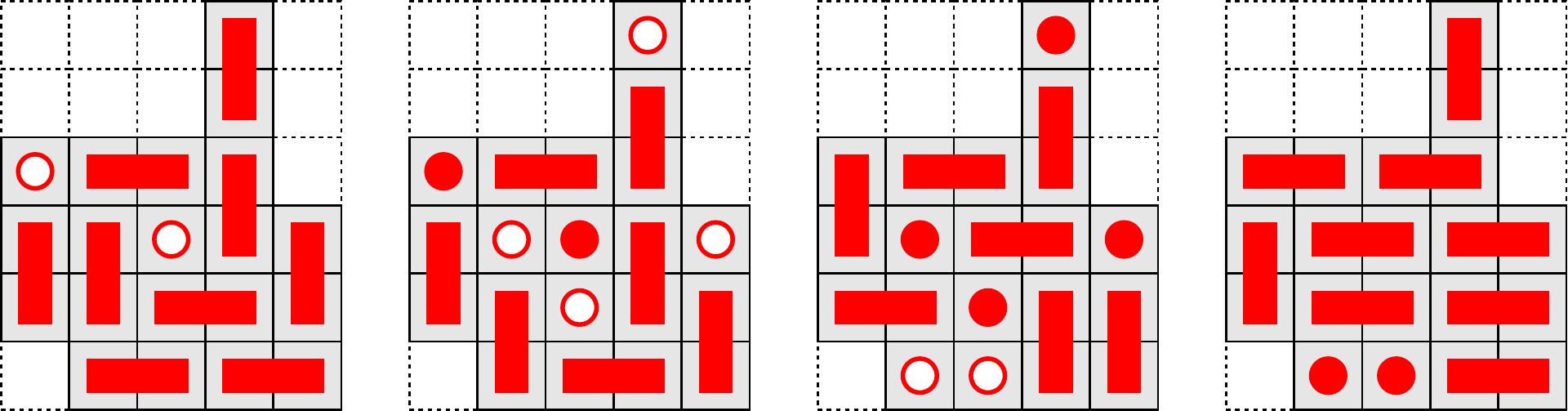}} \qquad
\def\svgwidth{0.47\columnwidth}
\subfloat[The curves in $\Gamma(t,\tbase)$.]{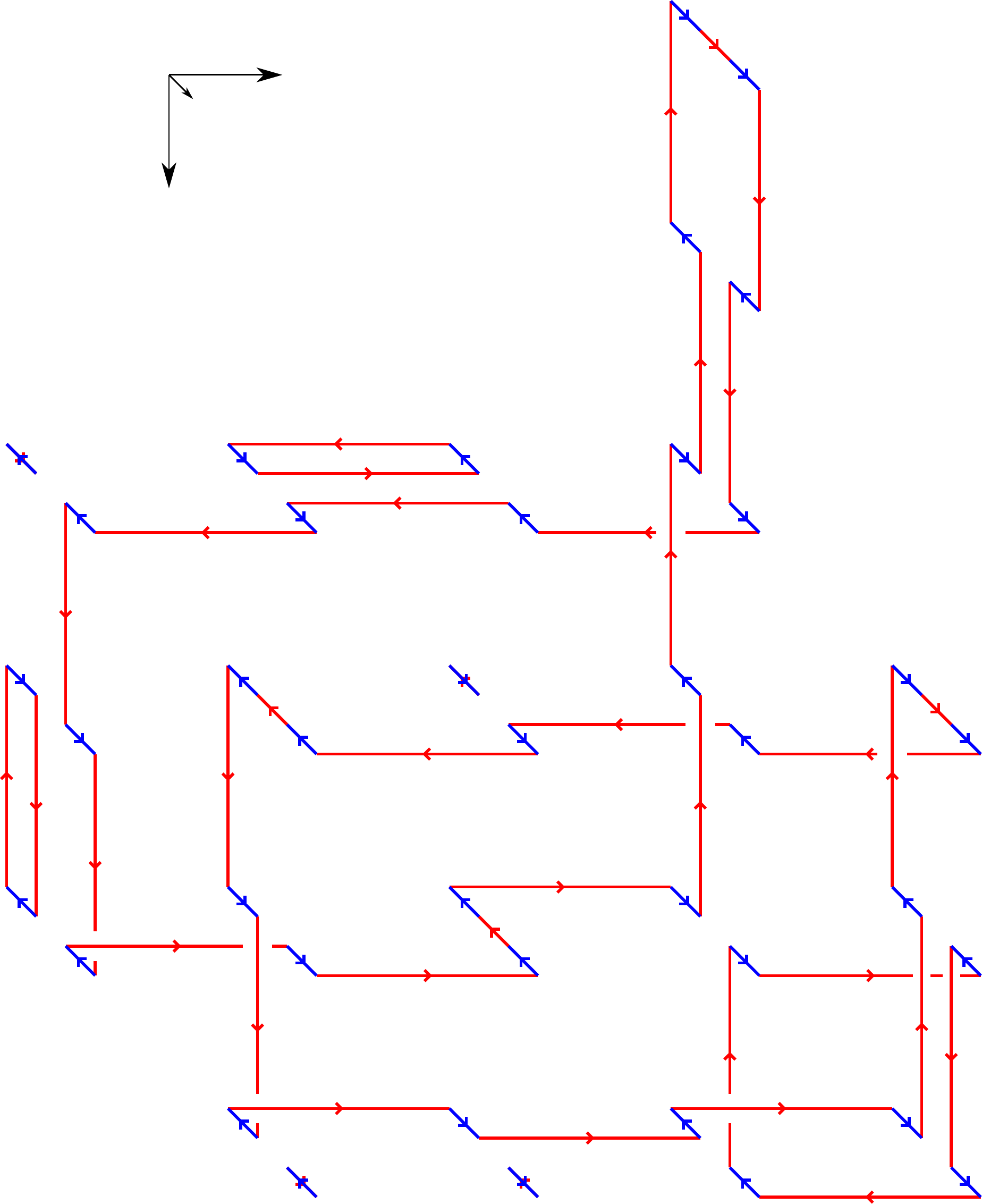\label{subfig:auxiliaryLinesDrawing}}%
\caption{A tiling $t$ of a $\vw$-cylinder with depth $4$, and $\Gamma(t,\tbase)$, where $\tbase$ is the tiling such that every dimer is parallel to $\vw$. The dimers of $t$ are the red segments, and the blue segments are the ones in $(-\tbase)$. We chose a basis $\beta \in \bB$ with $\vw = \vbeta_3$; $\vw$ points ``towards the paper''. $\Gamma(t,\tbase)$ consists of nine curves, four of which are trivial; the five nontrivial curves form $\Gamma^*(t,\tbase)$.}
\label{fig:auxiliaryLinesExample}%
\end{figure}

\begin{lemma}
\label{lemma:partialWritheFormula}
Given $\vw \in \Delta$, let $t$ be a tiling of a $\vw$-pseudocylinder of even depth $\cR$, and let $\tbase = \tbase(\cR)$. If $\Gamma^*(t, \tbase) = \{\gamma_i \,|\, 1 \leq i \leq m\}$, then 
$$T^{\vw}(t) = \sum_{1 \leq i \leq m} T^{\vw}(\gamma_i) + 2\sum_{1 \leq i < j \leq m} \Link(\gamma_i, \gamma_j).$$
\end{lemma}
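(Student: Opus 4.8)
The plan is to expand $T^{\vw}(t)$ using the fact that $t \cup (-\tbase)$ is exactly the disjoint union of the segments appearing in the curves of $\Gamma(t,\tbase)$, and that the extra (trivial) curves contribute nothing.

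\medskip

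\noindent\textbf{Step 1: reduce to the union of curves.} First I would observe that, since $\cR$ has even depth and $\vw$ is its axis, the tiling $\tbase$ consists entirely of dimers parallel to $\vw$. As noted in the text, $\tau^{\vw}(\ell_0,\ell) = \tau^{\vw}(\ell,\ell_0) = 0$ whenever $\ell_0 \in \tbase$ (equivalently $-\ell_0 \in -\tbase$) and $\ell$ is any segment, because a segment parallel to $\vw$ has zero $\vw$-effect on anything and receives zero $\vw$-effect from anything. Also $\tau^{\vw}(\ell_0,\ell_1) = \tau^{\vw}(-\ell_0,-\ell_1)$ since $\tv(-\ell)=-\tv(\ell)$ and negating both arguments leaves the determinant unchanged while the projection/height conditions are symmetric under simultaneously reversing both segments and swapping the roles (one checks $\tau^{\vw}(\ell_0,\ell_1)=\tau^{-\vw}(\ell_1,\ell_0)$ as in Lemma~\ref{lemma:twistNegatingDirection}, combined with $\tau^{\vw}=\tau^{-\vw}$ on tilings is not quite what is needed — rather, directly: reversing a dimer's orientation does not change its image as a set, and $\tau^{\vw}$ only depends on the segments' images together with their $\tv$-vectors up to the sign bookkeeping already built into the determinant). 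Hence $T^{\vw}(t) = T^{\vw}(t \cup (-\tbase))$, because every term of the double sum involving a segment of $-\tbase$ vanishes, and the remaining terms are exactly those of $T^{\vw}(t)$.

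\medskip

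\noindent\textbf{Step 2: split the union of curves into individual curves.} Now write $\Gamma(t,\tbase) = \Gamma^*(t,\tbase) \cup \{\text{trivial curves}\}$, and let the nontrivial curves be $\gamma_1,\dots,\gamma_m$. Since $t \cup (-\tbase) = \bigsqcup_i \{\ell : \ell \in \gamma_i\}$ as a disjoint union of segments, bilinearity of $T^{\vw}$ over sets of segments gives
$$T^{\vw}(t) = T^{\vw}\!\left(t \cup (-\tbase)\right) = \sum_{i} T^{\vw}(\gamma_i) + \sum_{i \neq j} T^{\vw}(\gamma_i,\gamma_j),$$
where the sums run over all curves of $\Gamma(t,\tbase)$. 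A trivial curve $\gamma$ is $\gamma|_{[0,1]} = -(\gamma|_{[1,2]})$, i.e.\ it consists of a segment and its reverse; for such a curve $T^{\vw}(\gamma) = 0$ (the two segments have the same image and opposite $\tv$, and one checks the four self-terms cancel in pairs) and, more importantly, $T^{\vw}(\gamma,\gamma') + T^{\vw}(\gamma',\gamma) = 0$ for any curve $\gamma'$, because the contribution of $\ell$ and of $-\ell$ to the symmetrized effect with a fixed segment $\ell'$ cancel (reversing orientation flips the sign of the determinant while the projection-crossing geometry is unchanged). Therefore all terms involving trivial curves drop out, leaving
$$T^{\vw}(t) = \sum_{1 \le i \le m} T^{\vw}(\gamma_i) + \sum_{i \neq j, \; 1 \le i,j \le m} T^{\vw}(\gamma_i,\gamma_j) = \sum_{1 \le i \le m} T^{\vw}(\gamma_i) + \sum_{1 \le i < j \le m}\bigl(T^{\vw}(\gamma_i,\gamma_j) + T^{\vw}(\gamma_j,\gamma_i)\bigr).$$

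\medskip

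\noindent\textbf{Step 3: identify the cross terms as linking numbers.} Each $\gamma_i$ is a nontrivial simple closed curve of $\cR$ (by construction of $\Gamma$), and for $i \ne j$ the curves $\gamma_i,\gamma_j$ are disjoint. Applying Lemma~\ref{lemma:linkingNumber}\ref{item:linking_00} with $\beta$ chosen so that $\vbeta_3 = \vw$ gives $T^{\vw}(\gamma_i,\gamma_j) + T^{\vw}(\gamma_j,\gamma_i) = 2\Link(\gamma_i,\gamma_j)$. Substituting into the previous display yields exactly
$$T^{\vw}(t) = \sum_{1 \le i \le m} T^{\vw}(\gamma_i) + 2\sum_{1 \le i < j \le m}\Link(\gamma_i,\gamma_j),$$
which is the claim.

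\medskip

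\noindent\textbf{Main obstacle.} The routine part is the bilinear expansion; the step that needs genuine care is the vanishing of all terms that involve dimers of $-\tbase$ or segments of trivial curves. This rests on the orientation-reversal symmetry of $\tau^{\vw}$ and on the fact that dimers parallel to the axis contribute nothing — both are essentially bookkeeping, but one must be careful that the asymmetric defining condition $\ell_0(0)\cdot\vw < \ell_1(0)\cdot\vw$ in $\tau^{\vw}$ interacts correctly with orientation reversal, so that it is the \emph{symmetrized} quantity $\tau^{\vw}(\ell_0,\ell_1)+\tau^{\vw}(\ell_1,\ell_0)$ that is the genuinely geometric object (a signed crossing count, cf.\ Lemma~\ref{lemma:transversalCrossings}), and it is in this symmetrized form that the cancellations and the passage to $\Link$ both take place.
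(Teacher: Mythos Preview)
Your proof is correct and follows essentially the same three-step route as the paper: $T^{\vw}(t) = T^{\vw}(t \sqcup (-\tbase))$ because segments of $-\tbase$ are parallel to $\vw$; bilinear expansion over the curves of $\Gamma(t,\tbase)$; and Lemma~\ref{lemma:linkingNumber}\ref{item:linking_00} for the cross terms. The one place you work harder than necessary is Step~2: a trivial curve of $\Gamma(t,\tbase)$ corresponds to a dimer in $t \cap \tbase$, hence to a dimer parallel to $\vw$, so both of its segments have $\tv$ parallel to $\vw$ and contribute nothing to any $\tau^{\vw}$ term --- you don't need the orientation-reversal cancellation argument (which is correct but superfluous here, and your Step~1 parenthetical about $\tau^{\vw}(\ell_0,\ell_1) = \tau^{\vw}(-\ell_0,-\ell_1)$ is likewise a distraction).
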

\begin{proof}
%Since all the segments in $(-\tbase)$ are parallel to $\vw$, it follows that, if $\ell_0 \in \tbase$, $\tau(\ell_0, \ell) = \tau(\ell,\ell_0) = 0$ for every segment $\ell$ of $\cR$. This also holds for the dimers of $t$ that belong to the trivial curves of $\Gamma(t,\tbase)$, since they are also parallel to $\vw$: in other words, %$\Tw(t) = T(t) = T(t \cup (-\tbase))$.
%Then:
Clearly,  
$$
T^{\vw}(t) = T^{\vw}(t \sqcup (-\tbase))  
= \sum_{i,j} T^{\vw}(\gamma_i, \gamma_j)  
= \sum_{i} T^{\vw}(\gamma_i) + 2\sum_{i < j} \Link(\gamma_i, \gamma_j),
$$
the last equality holding by Lemma \ref{lemma:linkingNumber}.
\end{proof}

%Just as with the second term in Equation \eqref{eq:partialWritheFormula}, we can also give a more topological interpretation of the first term. However, we will first need a few topological facts about the curves $\gamma \in \Gamma^*(t,\tbase)$. For what follows, we need the concept of directional writhing number (or writhe) of a curve in the direction of a vector $\tv$ (see \cite[Section 3]{writhingNumber}). 

%For a curve $\gamma$ and a vector $(a,b,c) \in \RR^3$, let $(\gamma + (a,b,c)):[0,1] \to \RR^3$, $(\gamma + (a,b,c))(s) = \gamma(s) + (a,b,c)$ denote the translation of $\gamma$ by $(a,b,c)$; also, if $\ell$ is a segment of $\gamma$, $\ell + (a,b,c)$ will denote the translation of $\ell$ by $(a,b,c)$ (which is not necessarily a segment of $\cR$).

For Lemmas \ref{lemma:gammaNoIntersection} and \ref{lemma:writhes}, assume $\vw \in \Delta$, $t$ is a tiling of a $\vw$-pseudocylinder with even depth $\cR$, and $\tbase = \tbase(\cR)$.

\begin{lemma}
\label{lemma:gammaNoIntersection}
Fix $\beta \in \bB$ such that $\vbeta_3 = \vw$.
If $\gamma$ is a curve of $\Gamma^*(t,\tbase)$ and $a^2 + b^2 + c^2 < 1$ and $ab \neq 0$, then $(\gamma+a \vbeta_1 + b \vbeta_2 + c \vbeta_3) \cap \gamma = \emptyset$.
\end{lemma}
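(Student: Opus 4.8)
The plan is to argue by contradiction, following the proof of Lemma~\ref{lemma:translationIntersection} and then bringing in the special structure of curves in $\Gamma^*(t,\tbase)$. Write $\vu = a\vbeta_1 + b\vbeta_2 + c\vbeta_3$; since the vectors of $\beta$ are orthonormal, $\|\vu\|^2 = a^2+b^2+c^2 < 1$. If $\gamma(s_0') = \gamma(s_1') + \vu$ for some $s_0',s_1'$ in the domain $[0,n]$ of $\gamma$, choose integers $k_i$ with $k_i \le s_i' \le k_i+1 \le n$ and set $\ell_i = \gamma|_{[k_i,k_i+1]}$, $s_i = s_i'-k_i \in [0,1]$, so that $\ell_0(s_0)-\ell_1(s_1) = \vu$ with $\ell_0,\ell_1$ segments of $\gamma$. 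By Lemma~\ref{lemma:intersectionOfSegments} (applicable since $\|\vu\|<1$), $\ell_0$ and $\ell_1$ are adjacent and $\vu = a_0\tv(\ell_0)+a_1\tv(\ell_1)$ for suitable $a_0,a_1 \in (-1,1)$.

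The next step uses the hypothesis $ab \ne 0$. Each of $\tv(\ell_0),\tv(\ell_1)$ is $\pm$ one of $\vbeta_1,\vbeta_2,\vbeta_3$; for the combination $a_0\tv(\ell_0)+a_1\tv(\ell_1)$ to have both a nonzero $\vbeta_1$-coordinate and a nonzero $\vbeta_2$-coordinate, one of $\tv(\ell_0),\tv(\ell_1)$ must be $\pm\vbeta_1$ and the other $\pm\vbeta_2$. In particular $\ell_0 \ne \ell_1$ (a segment has a single direction) and neither $\tv(\ell_0)$ nor $\tv(\ell_1)$ is parallel to $\vbeta_3 = \vw$.

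Finally I would invoke the structure of $\gamma$. A nontrivial curve of $\Gamma(t,\tbase)$ is simple, and its consecutive segments alternate between dimers of $t$ and segments of $\tbase$: at each cube center met by $\gamma$ the two incident segments are the (distinct) dimers of $t$ and of $\tbase$ through that cube — distinct because a cube lying in a common dimer of $t$ and $\tbase$ belongs to a trivial curve, not to $\gamma$. Since every dimer of $\tbase$ is parallel to $\vw$, any two consecutive segments of $\gamma$ include at least one parallel to $\vbeta_3$. But two distinct adjacent segments of a simple closed curve are necessarily consecutive, so $\{\ell_0,\ell_1\}$ is such a pair, contradicting the previous paragraph; this contradiction proves the lemma.

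I do not expect a real obstacle: for $c \ne 0$ the statement is already Lemma~\ref{lemma:translationIntersection} applied with $abc \ne 0$, so the only genuine content is the case $c = 0$, which is exactly where the alternation property of curves in $\Gamma^*(t,\tbase)$ — not shared by an arbitrary curve of $\cR$ — is needed. The points that need a little care are the verbatim reduction to a pair of segments $\ell_0,\ell_1$ and the elementary observation that distinct adjacent segments of a simple closed curve must be consecutive.
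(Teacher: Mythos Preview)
Your proof is correct and follows essentially the same route as the paper: argue by contradiction, reduce via Lemma~\ref{lemma:intersectionOfSegments} to adjacent segments $\ell_0,\ell_1$ of $\gamma$ with $\vu = a_0\tv(\ell_0)+a_1\tv(\ell_1)$, and then use the alternating structure of $\Gamma^*(t,\tbase)$ to force at least one of $\tv(\ell_0),\tv(\ell_1)$ parallel to $\vw$, contradicting $ab\ne 0$. The paper runs the implication in the direction ``adjacent $\Rightarrow$ one segment in $(-\tbase)$ $\Rightarrow$ $a=0$ or $b=0$'' while you phrase it contrapositively, and you make the ``distinct adjacent segments of a simple closed curve are consecutive'' step explicit where the paper leaves it implicit; but the argument is the same.
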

Notice that the case $c \neq 0$ follows from Lemma \ref{lemma:translationIntersection}.
\begin{proof} 
Let $\vu = a \vbeta_1 + b \vbeta_2 + c \vbeta_3.$
Suppose, by contradiction, that $\gamma$ and $\gamma + \vu$ are not disjoint, and let 
$\ell_0$ and $\ell_1$ be two segments of $\gamma$ such that $\ell_0(s_0) = \ell_1(s_1) + \vu$ for some $s_0,s_1 \in [0,1]$. By Lemma \ref{lemma:intersectionOfSegments}, $\ell_0$ and $\ell_1$ must be adjacent, so that at least one of these two segments is in $(-\tbase)$, hence parallel to $\vu$. Lemma \ref{lemma:intersectionOfSegments} also implies that $\vu = a \vbeta_1 + b \vbeta_2 + c \vbeta_3 = a_0 \tv(\ell_0) + a_1 \tv(\ell_1)$. Since at least one of $\tv(\ell_0), \tv(\ell_1)$ is parallel to $\vw = \vbeta_3$, it follows that $a = 0$ or $b = 0$, which contradicts the hypothesis.  
\end{proof}

By Lemma \ref{lemma:gammaNoIntersection}, if $\gamma \in \Gamma^*(t,\tbase)$, $\Wr(\gamma, a\vbeta_1 + b\vbeta_2 + c\vbeta_3)$ is defined whenever $ab \neq 0$. Set
$$\Wr^{+}(\gamma) = \Wr(\gamma, \vbeta_1 + \vbeta_2), \quad \Wr^{-}(\gamma) = \Wr(\gamma, \vbeta_1 - \vbeta_2).$$
Clearly, 
\begin{equation}
\Wr(\gamma, a \vbeta_1 + b \vbeta_2 + c \vbeta_3) = \begin{cases} \Wr^+(\gamma), & ab > 0; \\
\Wr^-(\gamma), & ab < 0.
\end{cases}
\label{eq:writhes}
\end{equation}

\begin{lemma}
\label{lemma:writhes}
If $\gamma$ is a curve of $\Gamma^*(t,\tbase)$, then
$$T^{\vw}(\gamma) = \frac{\Wr^+(\gamma) + \Wr^-(\gamma)}{2}.$$
\end{lemma}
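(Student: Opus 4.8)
The plan is to relate the combinatorial quantity $T^{\vw}(\gamma)$ to the two directional writhing numbers $\Wr^{\pm}(\gamma)$ by exhibiting both sides as averages of the same four "slanted" counts $T^{\beta}_{i\epsilon,j\epsilon}(\gamma)$ over $(i,j) \in \{-1,1\}^2$. By Lemma \ref{lemma:crossings} applied with $\ell_0 = \ell_1$ ranging over the segments of $\gamma$ (and summing), we already have $T^{\vw}(\gamma) = \frac{1}{4}\sum_{i,j \in \{-1,1\}} T^{\beta}_{i\epsilon,j\epsilon}(\gamma)$, where $\beta \in \bB$ is chosen with $\vbeta_3 = \vw$ and $0 < \epsilon < 1/N$, $N$ the $\vw$-length of $\cR$. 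So the entire content is to identify each $T^{\beta}_{i\epsilon,j\epsilon}(\gamma)$ with the appropriate $\Wr^{\pm}(\gamma)$.

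The key step is Lemma \ref{lemma:directionalWrithingNumber}, which (after checking $ab \neq 0$, guaranteed here since $i,j \in \{-1,1\}$) gives $T^{\beta}_{i\epsilon,j\epsilon}(\gamma) = \Wr(\gamma, \vbeta_3 + i\epsilon\vbeta_1 + j\epsilon\vbeta_2)$; note $\gamma \in \Gamma^*(t,\tbase)$ so the relevant writhing numbers are well-defined by Lemma \ref{lemma:gammaNoIntersection}. Then by \eqref{eq:writhes}, $\Wr(\gamma,\vbeta_3 + i\epsilon\vbeta_1 + j\epsilon\vbeta_2)$ equals $\Wr^+(\gamma)$ when $ij > 0$ — i.e.\ for $(i,j) \in \{(1,1),(-1,-1)\}$ — and equals $\Wr^-(\gamma)$ when $ij < 0$ — i.e.\ for $(i,j) \in \{(1,-1),(-1,1)\}$. (One should remark that $\Wr^{\pm}(\gamma)$ as defined uses the direction $\vbeta_1 \pm \vbeta_2$ with no $\vbeta_3$-component, but this is harmless: $\Wr(\gamma,\vu)$ depends only on the open ray direction, and the signs of the $\vbeta_1$- and $\vbeta_2$-components are all that matter by \eqref{eq:writhes}, which itself follows from Lemma \ref{lemma:differenceOfLinkingNumbers} — or, more directly, the directions $\vbeta_3 + a\vbeta_1 + b\vbeta_2$ and $a\vbeta_1 + b\vbeta_2$ are connected by a path of directions avoiding the locus where $ab = 0$, along which the writhe is locally constant by Lemma \ref{lemma:translationIntersection}.) Substituting, the four-term sum becomes $\frac{1}{4}(\Wr^+ + \Wr^+ + \Wr^- + \Wr^-) = \frac{\Wr^+(\gamma) + \Wr^-(\gamma)}{2}$, which is the claim.

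The main obstacle, as I see it, is purely bookkeeping: making sure all the genericity hypotheses line up. Specifically, one must choose a single $\epsilon$ small enough to simultaneously satisfy $\epsilon < 1/N$ (needed for Lemma \ref{lemma:crossings} and Lemma \ref{lemma:directionalWrithingNumber}) across all pairs of segments in $\gamma$; one must confirm $\gamma$ is simple (true since $\gamma \in \Gamma^*(t,\tbase)$) so that $\Link$ and hence $\Wr$ make sense; and one must be slightly careful that Lemma \ref{lemma:directionalWrithingNumber} is stated for a single simple closed curve $\gamma$ with $T^{\beta}_{a,b}(\gamma) = T^{\beta}_{a,b}(\gamma,\gamma)$, which matches exactly the summand we need. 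None of these requires real work, so the proof is short; the essential ideas are entirely contained in the already-proved Lemmas \ref{lemma:crossings}, \ref{lemma:directionalWrithingNumber}, \ref{lemma:gammaNoIntersection} and the identity \eqref{eq:writhes}.
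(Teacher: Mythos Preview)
Your proposal is correct and follows essentially the same route as the paper: fix $\beta$ with $\vbeta_3 = \vw$, use Lemma~\ref{lemma:crossings} to express $T^{\vw}(\gamma)$ as $\tfrac14\sum_{i,j} T^{\beta}_{i\epsilon,j\epsilon}(\gamma)$, invoke Lemma~\ref{lemma:directionalWrithingNumber} to rewrite each summand as a directional writhe, and then apply \eqref{eq:writhes}. One small wording quibble: where you say ``Lemma~\ref{lemma:crossings} applied with $\ell_0 = \ell_1$ ranging over the segments of $\gamma$'' you presumably mean ``with $\ell_0,\ell_1$ ranging over the segments of $\gamma$'' (the double sum, not the diagonal), which is indeed what is needed and what your later remark $T^{\beta}_{a,b}(\gamma)=T^{\beta}_{a,b}(\gamma,\gamma)$ confirms you intend.
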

\begin{proof}
Fix $\beta \in \bB$ with $\vbeta_3 = \vw$, and let $N$ denote the $\vw$-length of the pseudocylinder (which is equal to its depth).
By Lemmas \ref{lemma:crossings} and \ref{lemma:directionalWrithingNumber}, given $0 < \epsilon < 1/N,$
$$
T^{\vw}(\gamma) = \frac{1}{4}\sum_{i,j \in \{-1,1\}} T^{\beta}_{i\epsilon,j\epsilon}(\gamma) = \frac{1}{4}\sum_{i,j \in \{-1,1\}} \Wr(\gamma, i\epsilon\vbeta_1 + j \epsilon \vbeta_2 + \vbeta_3);  
$$
Equation \eqref{eq:writhes} completes the proof.
\end{proof}

%It turns out that Proposition \ref{prop:writheFormula} allows us to show that the $\vw$-pretwist is an integer:

\begin{lemma}
\label{lemma:writheDifference}
Let $\vw \in \Delta$, and let $t$ be a tiling of a $\vw$-pseudocylinder with even depth.
If $\gamma$ is a curve of $\Gamma^*(t,\tbase)$, then $(\Wr^+(\gamma) + \Wr^-(\gamma))/2 \in \ZZ$.
%In particular, since $\Wr^+(\gamma) + \Wr^-(\gamma)$ and $\Wr^+(\gamma) - \Wr^-(\gamma)$ have the same parity, $T^{\vw}(t) \in \ZZ$.
\end{lemma}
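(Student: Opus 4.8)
The plan is to reduce the integrality of $(\Wr^+(\gamma)+\Wr^-(\gamma))/2$ to a parity statement about $\sum_k\eta^\beta_\gamma(k)$, and then to evaluate that parity by combining the alternating structure of $\gamma$ with the explicit shape of $\tbase$. First, fix $\beta\in\bB$ with $\vbeta_3=\vw$. By Lemma~\ref{lemma:gammaNoIntersection} both $\Wr^+(\gamma)$ and $\Wr^-(\gamma)$ are well defined, and since they are linking numbers they are integers; hence it suffices to prove $\Wr^+(\gamma)\equiv\Wr^-(\gamma)\pmod 2$. Applying Lemma~\ref{lemma:differenceOfLinkingNumbers} with $a,b,c>0$ and using \eqref{eq:writhes} to identify $\Wr(\gamma,a\vbeta_1+b\vbeta_2+c\vbeta_3)=\Wr^+(\gamma)$ and $\Wr(\gamma,-a\vbeta_1+b\vbeta_2+c\vbeta_3)=\Wr^-(\gamma)$, one gets $\Wr^+(\gamma)-\Wr^-(\gamma)=\sum_{0\le k<n}\eta^\beta_\gamma(k)$. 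Because each $\eta^\beta_\gamma(k)\in\{-1,0,1\}$, the whole lemma reduces to showing that the number of indices $k$ with $\eta^\beta_\gamma(k)\ne 0$ is even.

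Next I would exploit that $\gamma\in\Gamma^*(t,\tbase)$. Its segments $\ell_0,\ell_1,\dots,\ell_{n-1}$ alternate between dimers of $t$ and segments of $(-\tbase)$, so $n$ is even and, after reindexing, $\ell_{2j}\in t$ and $\ell_{2j+1}\in(-\tbase)$; in particular every $\ell_{2j+1}$ is parallel to $\vw=\vbeta_3$. From the definition of $\eta^\beta_\gamma$, a value $\eta^\beta_\gamma(k)\ne 0$ forces one of $\tv(\ell_k),\tv(\ell_{k+1})$ to be $\pm\vbeta_2$ and the other $\pm\vbeta_3$; hence every nonzero contribution sits next to a dimer $\ell_{2j}\in t$ that is parallel to $\vbeta_2$, flanked by the two $\vbeta_3$-parallel segments $\ell_{2j-1}$ and $\ell_{2j+1}$. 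Writing $\sigma_j,\sigma_{j+1}\in\{\pm1\}$ for the $\vbeta_3$-signs of $\tv(\ell_{2j-1}),\tv(\ell_{2j+1})$ and $s_j\in\{\pm1\}$ for the $\vbeta_2$-sign of $\tv(\ell_{2j})$, a direct check against the four nonzero cases of $\eta^\beta_\gamma$ shows that of the two relevant indices $k\in\{2j-1,2j\}$, the first gives a nonzero $\eta^\beta_\gamma$ exactly when $\sigma_j=s_j$ and the second exactly when $s_j=\sigma_{j+1}$.

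The crux — and the step I expect to require the most care — is the claim that $\sigma_j=-\sigma_{j+1}$ for every such $\vbeta_2$-parallel dimer $\ell_{2j}$ of $\gamma$. This is precisely where the shape of $\tbase$ on a $\vw$-pseudocylinder of even depth is used: $\tbase$ matches the floors in the pattern $\{1,2\},\{3,4\},\dots$, so a cube in floor $f$ is matched by $\tbase$ with the cube one unit in the $+\vbeta_3$ direction when $f$ is odd and in the $-\vbeta_3$ direction when $f$ is even. Since $\ell_{2j}$ is horizontal, its two endpoints $\gamma(2j)$ and $\gamma(2j+1)$ lie in a single floor $f$, so the $\tbase$-dimers $-\ell_{2j-1}$ (through $\gamma(2j)$) and $-\ell_{2j+1}$ (through $\gamma(2j+1)$) reach into the \emph{same} neighbouring floor; as $\ell_{2j-1}$ enters $\gamma(2j)$ while $\ell_{2j+1}$ leaves $\gamma(2j+1)$, their $\vbeta_3$-orientations come out opposite, i.e.\ $\sigma_j=-\sigma_{j+1}$. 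Granting this, $s_j$ equals exactly one of $\sigma_j,\sigma_{j+1}$, so each $\vbeta_2$-parallel dimer of $\gamma$ is responsible for exactly one index $k$ with $\eta^\beta_\gamma(k)\ne0$, distinct dimers give distinct indices, and every nonzero index arises this way. Therefore $\#\{k:\eta^\beta_\gamma(k)\ne0\}$ equals the total number of $\vbeta_2$-parallel segments of $\gamma$.

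Finally I would note that this number is even for an elementary reason: $\gamma$ is a closed curve, so $\sum_{0\le k<n}\tv(\ell_k)=0$, and taking the $\vbeta_2$-component annihilates every segment parallel to $\vbeta_1$ or $\vbeta_3$, leaving $\#\{k:\tv(\ell_k)=\vbeta_2\}=\#\{k:\tv(\ell_k)=-\vbeta_2\}$; hence the number of $\vbeta_2$-parallel segments is $2\#\{k:\tv(\ell_k)=\vbeta_2\}$, which is even. Consequently $\sum_{0\le k<n}\eta^\beta_\gamma(k)$ is even, so $\Wr^+(\gamma)\equiv\Wr^-(\gamma)\pmod 2$, which is exactly the claim. (An essentially equivalent bookkeeping: each $\vbeta_2$-parallel dimer contributes $(\sigma_{j+1}-\sigma_j)/2=\pm1$ to $\sum_k\eta^\beta_\gamma(k)$, so this sum is a signed count of an even number of terms.)
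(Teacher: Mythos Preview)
Your proposal is correct and follows essentially the same approach as the paper. Both arguments reduce the claim via Lemma~\ref{lemma:differenceOfLinkingNumbers} to the parity of $\sum_k\eta^\beta_\gamma(k)$, observe that nonzero contributions are localized at the $\vbeta_2$-parallel dimers of $t$ flanked by two $\vbeta_3$-parallel segments of $-\tbase$, use the structure of $\tbase$ on an even-depth pseudocylinder to see that those two flanking segments point in opposite $\vbeta_3$-directions (the paper phrases this via the floor parity $z_k$, you via $\sigma_j=-\sigma_{j+1}$, but it is the same observation), and conclude using $\sum_k\tv(\ell_k)\cdot\vbeta_2=0$. The paper computes the slightly sharper $\eta^\beta_\gamma(k-1)+\eta^\beta_\gamma(k)=(-1)^{z_k}$, whereas you only establish that exactly one of the two is nonzero; but for the parity conclusion this makes no difference.
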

\begin{proof}
Pick $\beta \in \bB$ with $\vbeta_3 = \vw$. Assume without loss of generality that $\cR = \cD + [0,2N]\vbeta_3$, $\cD \subset \vbeta_3^{\perp}$. 
If $\gamma:[0,n] \to \RR^3$, set $\ell_k = \gamma|_{[k,k+1]}$ for $k=0,1,\ldots, n-1$, and set $\ell_n = \ell_0$. 

By definition and using Lemma \ref{lemma:differenceOfLinkingNumbers},
$
\Wr^+(\gamma) - \Wr^-(\gamma) = \sum_k \eta^{\beta}_{\gamma}(k).
$
We need to look at $k$ such that $\eta^{\beta}_{\gamma}(k) \neq 0$, i.e., $\{\tv(\ell_k),\tv(\ell_{k+1})\} = \{\vbeta_2,\vbeta_3\} \text{ or } \{-\vbeta_2,-\vbeta_3\}$. Since every segment of $-\tbase$ is parallel to $\vbeta_3$, we need to look at every segment of $t$ that is parallel to $\vbeta_2$.

For each segment $\ell_k$ of $t$ with $\tv(\ell_k) = \pm \vbeta_2$, let $\plshalf{z_k} = \vbeta_3 \cdot \ell_k(0)$, so that $z_k \in \ZZ$. If $z_k$ is odd, then, by definition of $\tbase$, $\tv(\ell_{k-1}) = \vbeta_3 = - \tv(\ell_{k+1})$, so that either $(\tv(\ell_{k-1}),\tv(\ell_k)) = (\vbeta_3,\vbeta_2)$ or $(\tv(\ell_k),\tv(\ell_{k+1})) = (-\vbeta_2,-\vbeta_3)$. Making a similar analysis for $z_k$ even, we see that $\eta^{\beta}_{\gamma}(k-1) + \eta^{\beta}_{\gamma}(k) = (-1)^{z_k}$. 
Working with congruence modulo $2$,
\begin{equation*}
\Wr^+(\gamma) + \Wr^-(\gamma) \equiv \Wr^+(\gamma) - \Wr^-(\gamma) = \sum_{\tv(\ell_k) = \pm \vbeta_2}(-1)^{z_k} \equiv \sum_{k} (\tv(\ell_k) \cdot \vbeta_2) = 0, 
\end{equation*}
which completes the proof.
\end{proof}

\begin{prop}
\label{prop:writheFormula}
If $\vw \in \Delta$, $\cR$ is a $\vw$-pseudocylinder with even depth, $t$ is a tiling of $\cR$, $\tbase = \tbase(\cR)$ and $\Gamma^*(t,\tbase) = \{\gamma_i \,|\, 1 \leq i \leq m\}$, then
$$
T^{\vw}(t) = \sum_{1 \leq i \leq m} \frac{\Wr^+(\gamma_i) + \Wr^-(\gamma_i)}{2} + 2\sum_{1 \leq i < j \leq m} \Link(\gamma_i, \gamma_j) \in \ZZ.
$$
\end{prop}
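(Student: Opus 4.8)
The plan is to obtain the formula by directly assembling the three lemmas proved earlier in this section. First I would apply Lemma~\ref{lemma:partialWritheFormula}, which already gives
$$T^{\vw}(t) = \sum_{1 \leq i \leq m} T^{\vw}(\gamma_i) + 2\sum_{1 \leq i < j \leq m} \Link(\gamma_i, \gamma_j),$$
where $T^{\vw}(\gamma_i) = T^{\vw}(\gamma_i,\gamma_i)$ as defined at the end of Section~\ref{sec:topologicalGroundwork}. Then I would replace each self-term using Lemma~\ref{lemma:writhes}, which says $T^{\vw}(\gamma_i) = (\Wr^+(\gamma_i) + \Wr^-(\gamma_i))/2$ for every $\gamma_i \in \Gamma^*(t,\tbase)$. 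Substituting this into the displayed identity yields precisely the claimed formula; no further computation is needed.

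For the integrality statement I would argue termwise. Each summand $(\Wr^+(\gamma_i) + \Wr^-(\gamma_i))/2$ is an integer by Lemma~\ref{lemma:writheDifference} — this is the step where the even-depth hypothesis and the special structure of $\tbase$ (all dimers parallel to $\vw$) are genuinely used, via Lemma~\ref{lemma:differenceOfLinkingNumbers}. Each $\Link(\gamma_i,\gamma_j)$ with $i \neq j$ is an integer, by the classical fact from knot theory that the linking number of two disjoint closed curves is an integer; this applies here because the curves of $\Gamma^*(t,\tbase)$ are, by construction, simple, closed and pairwise disjoint. Hence $2\sum_{i<j}\Link(\gamma_i,\gamma_j)$ is even, and the whole expression lies in $\ZZ$.

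The only thing that requires a word of care is verifying that the hypotheses of Lemmas~\ref{lemma:partialWritheFormula}, \ref{lemma:writhes} and \ref{lemma:writheDifference} are all met at once: each of them presupposes exactly that $\vw \in \Delta$, that $\cR$ is a $\vw$-pseudocylinder of even depth, that $t$ is a tiling of $\cR$, that $\tbase = \tbase(\cR)$, and that the curves range over $\Gamma^*(t,\tbase)$ — which is the setting of the proposition verbatim. So I do not expect any real obstacle here; the proposition is essentially a repackaging of the section's lemmas, and the substantive work has already been carried out in Lemma~\ref{lemma:writheDifference} and, further upstream, in Lemma~\ref{lemma:differenceOfLinkingNumbers}.
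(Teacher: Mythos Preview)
Your proposal is correct and matches the paper's own proof essentially verbatim: the paper simply says the result follows directly from Lemmas~\ref{lemma:partialWritheFormula}, \ref{lemma:writhes} and \ref{lemma:writheDifference}, which is exactly the assembly you describe. Your extra remark that each $\Link(\gamma_i,\gamma_j)$ is an integer is the only point the paper leaves implicit, and it is indeed immediate from the classical definition.
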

\begin{proof}
Follows directly from Lemmas \ref{lemma:partialWritheFormula}, \ref{lemma:writhes} and \ref{lemma:writheDifference}.
\end{proof}

\section{Different directions of projection}
\label{sec:differentDirections}

Our goal for this Section is to prove Proposition \ref{prop:equalTwistsMultiplex}, that is, that all pretwists coincide for a cylinder. %In order to achieve this, we will need technology developed in Sections \ref{sec:topologicalGroundwork} and \ref{sec:writheInterpretation}.

\begin{lemma}
\label{lemma:xEffectsEqualsTwist}
Let $\vw \in \Delta$, and let $\cR$ be a $\vw$-pseudocylinder with even depth.
Let $t$ be a tiling of $\cR$, and let $\tbase$ be the tiling such that every dimer is parallel to $\vw$. If $\vu \in \Phi$, then 
$T^{\vu}(t \sqcup (-\tbase)) = T^{\vw}(t).$ 
%= \sum_{0 \leq i \leq k} \frac{\Wr^+(\gamma_i) - \Wr^-(\gamma_i)}{2} + 2\sum_{0 \leq i < j \leq k} \Link(\gamma_i, \gamma_j).$$
\end{lemma}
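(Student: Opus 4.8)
The plan is to decompose the link $L = t\sqcup(-\tbase)$ into the set $\Gamma = \Gamma(t,\tbase)$ of closed curves and reduce to a statement about a single curve, exactly as in the proof of Lemma~\ref{lemma:partialWritheFormula}. Since each oriented segment of $L$ lies on exactly one curve of $\Gamma$, we have $T^{\vu}(L) = \sum_{\gamma,\gamma'\in\Gamma}T^{\vu}(\gamma,\gamma')$. I would first check that every summand involving a trivial curve $\gamma_0 = \{\ell,-\ell\}$ vanishes: one of $\ell,-\ell$ is a dimer of $\tbase$, so both are parallel to $\vw$, and since $\vu$ is a coordinate vector either $\vu = \pm\vw$ --- and then the determinant factor $\det(\tv(\ell''),\pm\vw,\pm\vw)$ is $0$ --- or $\vu\perp\vw$, in which case $\ell(0)$ and $(-\ell)(0) = \ell(1)$ have the same $\vu$-coordinate; in both cases $\tau^{\vu}(\ell,\ell'') + \tau^{\vu}(-\ell,\ell'') = \tau^{\vu}(\ell'',\ell) + \tau^{\vu}(\ell'',-\ell) = 0$ for every segment $\ell''$. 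Hence $T^{\vu}(L) = \sum_{\gamma\in\Gamma^*}T^{\vu}(\gamma) + \sum_{\gamma\ne\gamma'\in\Gamma^*}T^{\vu}(\gamma,\gamma')$, and pairing $(\gamma,\gamma')$ with $(\gamma',\gamma)$ and invoking item~\ref{item:linking_00} of Lemma~\ref{lemma:linkingNumber} with $\vbeta_3 = \vu$ (licit now that all curves are simple), the last sum equals $2\sum_{\{\gamma,\gamma'\}\subset\Gamma^*}\Link(\gamma,\gamma')$. In view of Lemma~\ref{lemma:partialWritheFormula} it then remains to prove
$$
T^{\vu}(\gamma) = T^{\vw}(\gamma)\qquad\text{for every }\gamma\in\Gamma^*(t,\tbase).
$$

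For this per-curve identity, the case $\vu = \vw$ is trivial and $\vu = -\vw$ reduces to it by the segment analogue of Lemma~\ref{lemma:twistNegatingDirection}, so assume $\vu\perp\vw$. Fix $\beta = (\vbeta_1,\vbeta_2,\vbeta_3)\in\bB$ with $\vbeta_3 = \vw$ and $\beta' = (\vbeta'_1,\vbeta'_2,\vbeta'_3)\in\bB$ with $\vbeta'_3 = \vu$, and choose $0 < \epsilon < 1/N$ where $N$ is the $\vu$-length of $\cR$. Combining Lemma~\ref{lemma:crossings} with Lemma~\ref{lemma:directionalWrithingNumber} (both for $\beta'$) gives
$$
T^{\vu}(\gamma) = \frac14\sum_{i,j\in\{-1,1\}}\Wr\bigl(\gamma,\ \vu + i\epsilon\vbeta'_1 + j\epsilon\vbeta'_2\bigr).
$$
Now rewrite each of these four directions in the basis $\beta$. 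Since $\vu\perp\vw$, exactly one of $\vbeta'_1,\vbeta'_2$ equals $\pm\vw = \pm\vbeta_3$ and the other equals the coordinate vector orthogonal to both $\vu$ and $\vw$, hence $\pm\vbeta_1$ or $\pm\vbeta_2$; and $\vu$ is the remaining one of $\pm\vbeta_1,\pm\vbeta_2$. So in $\beta$-coordinates the direction $\vu + i\epsilon\vbeta'_1 + j\epsilon\vbeta'_2$ has one of its $\vbeta_1,\vbeta_2$ components equal to $\pm1$ (from $\vu$) and the other equal to $\pm\epsilon$ (from whichever of $i\epsilon\vbeta'_1,j\epsilon\vbeta'_2$ is the $\pm\vbeta_{1,2}$ vector), while its $\vbeta_3$ component is $\pm\epsilon$. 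In particular the product of its first two components is nonzero and its sign depends on only one of the indices $i,j$. Thus, by \eqref{eq:writhes} (applied after rescaling the direction to norm $< 1$, which changes neither the writhe nor the signs of the components), among the four writhes exactly two equal $\Wr^+(\gamma)$ and two equal $\Wr^-(\gamma)$, so $T^{\vu}(\gamma) = \tfrac12\bigl(\Wr^+(\gamma) + \Wr^-(\gamma)\bigr) = T^{\vw}(\gamma)$ by Lemma~\ref{lemma:writhes}.

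The heart of the argument is this last step: locating the four perturbed projection directions around the coordinate vector $\vu$ relative to the ``chamber structure'' of $\vec x\mapsto\Wr(\gamma,\vec x)$ on $\Gamma^*$-curves. It works only because a nontrivial curve of $\Gamma(t,\tbase)$ alternates with segments of $-\tbase$, all parallel to $\vw$ --- this is precisely what yields Lemma~\ref{lemma:gammaNoIntersection} and \eqref{eq:writhes}, namely that $\Wr(\gamma,\cdot)$ depends only on the sign of the product of the first two coordinates in the $\vw$-adapted basis $\beta$, which forces the four perturbations around any coordinate direction to split $2$--$2$. The remaining ingredients --- the vanishing of the trivial-curve contributions, the choice of $\epsilon$ making Lemmas~\ref{lemma:crossings} and \ref{lemma:directionalWrithingNumber} applicable, and the matching of crossings of $\Pi^{\beta'}_{a,b}$ with those of a genuine perturbed orthogonal projection --- are routine and already handled in Section~\ref{sec:topologicalGroundwork}.
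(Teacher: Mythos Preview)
Your proof is correct and follows essentially the same route as the paper: decompose $t\sqcup(-\tbase)$ into the curves of $\Gamma^*(t,\tbase)$, handle the off-diagonal terms via Lemma~\ref{lemma:linkingNumber}, then for each curve use Lemmas~\ref{lemma:crossings} and~\ref{lemma:directionalWrithingNumber} (with $\vbeta_3=\vu$) to write $T^{\vu}(\gamma)$ as an average of four directional writhes, and finally invoke~\eqref{eq:writhes} to identify these with $\Wr^\pm(\gamma)$ and conclude via Lemma~\ref{lemma:writhes} (the paper cites Proposition~\ref{prop:writheFormula} directly at this point, which amounts to the same thing). Your treatment is more explicit about the vanishing of trivial-curve contributions and the $2$--$2$ split of the four writhes, but the argument is the same.
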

%Notice that the right hand-side is the usual definition for the twist (by Equation \eqref{eq:partialWritheFormula}).
\begin{proof}
%$\sum_{ 1 \leq i,j \leq m } T^{\vu}(\gamma_i,\gamma_j) = \sum_{ 1 \leq i,j \leq m } T^{\vw}(\gamma_i,\gamma_j) $ 
%The case $\vu = \vw$ is trivial.
Suppose $\Gamma^*(t,\tbase) = \{\gamma_i \,|\, 1 \leq i \leq m\}$.  
Clearly, 
$$
T^{\vu}(t \sqcup (-\tbase)) = 
\sum_{i,j} T^{\vu}(\gamma_i,\gamma_j)
= \sum_i T^{\vu}(\gamma_i) + 2\sum_{i < j} \Link(\gamma_i, \gamma_j).
$$
Let $L$ be the $\vu$-length of $\cR$ and $0 < \epsilon < 1/L$. Let $\beta \in \bB$ such that $\vbeta_3 = \vu$. Then, by Lemmas \ref{lemma:crossings} and \ref{lemma:directionalWrithingNumber}, 
$$
T^{\vu}(\gamma_i) = \frac{1}{4}\sum_{k,l \in \{-1,1\}}T^{\beta}_{k \epsilon, l \epsilon}(\gamma_i) = \frac{1}{4}\sum_{k,l \in \{-1,1\}}\Wr(\gamma_i, k\epsilon \vbeta_1 + l\epsilon \vbeta_2 + \vbeta_3).
$$
%By Lemma \ref{lemma:posAndNegWrithes},
%$$\Link(\gamma_i, \gamma_i + \delta(\vbeta_3 + k\epsilon \vbeta_1 + l\epsilon \vbeta_2)) = \begin{cases}\Wr^+(\gamma_i), &k = 1;\\ \Wr^-(\gamma_i), &k = -1. \end{cases}$$
By Equation \eqref{eq:writhes} and Proposition \ref{prop:writheFormula}, %$T^{\ex}(\gamma_i) = \frac{\Wr^+(\gamma_i) - \Wr^-(\gamma_i)}{2}$. By Lemma \ref{lemma:writheDifference}, 
$$
T^{\vu}(t \sqcup (-\tbase)) = \sum_i \frac{\Wr^+(\gamma_i) + \Wr^-(\gamma_i)}{2} + 2\sum_{i < j} \Link(\gamma_i, \gamma_j) = T^{\vw}(t).
$$
\end{proof}

\begin{lemma}
\label{lemma:allPretwistsBoxes}
Let $\cB = [0,L] \times [0,M] \times [0,N]$ be a box that has at least one even dimension, and let $t$ be a tiling of $\cB$. Then $T^{\ex}(t) = T^{\ey}(t) = T^{\ez}(t)$.
\end{lemma}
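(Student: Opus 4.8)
The plan is to reduce the statement about an arbitrary box with an even dimension to the case already handled by Lemma \ref{lemma:xEffectsEqualsTwist}. Without loss of generality suppose $N$ is even, so that $\cB$ is an $\ez$-pseudocylinder (indeed an honest cylinder) of even depth; let $\tbase = \tbase(\cB)$ be the tiling all of whose dimers are parallel to $\ez$. By Lemma \ref{lemma:xEffectsEqualsTwist} applied with $\vw = \ez$, we have for every $\vu \in \Phi$
$$T^{\vu}(t \sqcup (-\tbase)) = T^{\ez}(t).$$
The key observation is that the left-hand side expands, by bilinearity of $T^{\vu}(\cdot,\cdot)$, as
$$T^{\vu}(t \sqcup (-\tbase)) = T^{\vu}(t) + T^{\vu}(t, -\tbase) + T^{\vu}(-\tbase, t) + T^{\vu}(-\tbase),$$
so it suffices to show that the three ``correction'' terms involving $\tbase$ vanish, or at least are independent of the choice of $\vu \in \{\ex,\ey,\ez\}$; if each of them is zero, then $T^{\vu}(t) = T^{\ez}(t)$ for every $\vu$, and in particular $T^{\ex}(t) = T^{\ey}(t) = T^{\ez}(t)$.

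First I would dispose of $T^{\vu}(-\tbase)$: since every segment of $-\tbase$ is parallel to $\ez$, the determinant $\det(\tv(\ell_1),\tv(\ell_0),\vu)$ in the definition of $\tau^{\vu}$ vanishes for any pair $\ell_0,\ell_1 \in -\tbase$ (two parallel vectors), so $T^{\vu}(-\tbase) = 0$ regardless of $\vu$. For the cross terms $T^{\vu}(t,-\tbase)$ and $T^{\vu}(-\tbase,t)$, the approach is to show that no segment of $t$ contributes against a segment of $\tbase$. When $\vu = \ez$ this is exactly the statement recorded just before Lemma \ref{lemma:partialWritheFormula}: for any segment $\ell$ of $\cR$ and any dimer $\ell_0 \in \tbase$, $\tau^{\ez}(\ell_0,\ell) = \tau^{\ez}(\ell,\ell_0) = 0$, since $\ell_0 \parallel \ez$. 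For general $\vu \in \Delta$ one needs a small extra argument: a dimer $\ell_0 \in \tbase$ parallel to $\ez$ is not parallel to $\vu$ when $\vu \in \{\ex,\ey\}$, so the determinant no longer automatically vanishes. Here I would instead argue that the $\vu$-shades of the dimers of $\tbase$, grouped into the vertical columns of the box, contribute in cancelling pairs: within each column of $\cB$ the dimers of $\tbase$ alternate orientation $\tv(\ell_0) = \pm\ez$ from one floor to the next (consecutive dimers in a column point oppositely), and since $N$ is even the column decomposes completely into such oppositely-oriented adjacent pairs $\ell_0, \ell_0'$ with $\tv(\ell_0') = -\tv(\ell_0)$ occupying vertically adjacent cells, hence with the same $\Pi^{\vu}$-projection; their contributions $\tau^{\vu}(\ell_0,\ell) + \tau^{\vu}(\ell_0',\ell)$ and $\tau^{\vu}(\ell,\ell_0) + \tau^{\vu}(\ell,\ell_0')$ cancel for every segment $\ell$ of $t$, exactly as in the cancellation arguments used in Case \ref{case:d1parz} of Proposition \ref{prop:flipsAndTrits}. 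This forces $T^{\vu}(t,-\tbase) = T^{\vu}(-\tbase,t) = 0$.

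Combining these, $T^{\vu}(t \sqcup (-\tbase)) = T^{\vu}(t)$ for every $\vu \in \Delta$, and the left side equals $T^{\ez}(t)$ by Lemma \ref{lemma:xEffectsEqualsTwist}; hence $T^{\ex}(t) = T^{\ey}(t) = T^{\ez}(t)$. The main obstacle I anticipate is precisely the cancellation of the cross terms for $\vu \ne \ez$: one must check carefully that the pairing of vertically adjacent dimers of $\tbase$ into oppositely-oriented pairs really does kill $\tau^{\vu}(\ell,\ell_0) + \tau^{\vu}(\ell,\ell_0')$ for an arbitrary segment $\ell$ of $t$ — in particular that the ``$\ell_0(0)\cdot\vu < \ell_1(0)\cdot\vu$'' height condition (or the shade-intersection condition, in the domino formulation) behaves identically for $\ell_0$ and $\ell_0'$, which it does since $\ell_0$ and $\ell_0'$ project to the same segment under $\Pi^{\vu}$ and occupy the same $\vu$-coordinate range. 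Everything else is bilinearity plus the already-established Lemma \ref{lemma:xEffectsEqualsTwist}.
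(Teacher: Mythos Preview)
Your overall strategy is exactly that of the paper: assume $N$ even, apply Lemma~\ref{lemma:xEffectsEqualsTwist} with $\vw = \ez$ to get $T^{\vu}(t \sqcup (-\tbasez)) = T^{\ez}(t)$ for every $\vu$, expand bilinearly, and show that the three correction terms vanish. Your handling of $T^{\vu}(-\tbasez)=0$ is fine. The genuine gap is in the cancellation argument for the cross terms when $\vu \perp \ez$, precisely the step you flagged as the main obstacle.

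The pairing you describe does not exist. In a fixed vertical column (fixed $x,y$) the dominoes of $\tbasez$ occupy $z$-intervals $[0,2],[2,4],\ldots$; the bottom cube of each sits at an even $z$, so its color depends only on the parity of $x+y$. Hence \emph{every} dimer of $\tbasez$ in a given column has the \emph{same} orientation $\tv$, not alternating ones. Moreover, two $\ez$-parallel dimers at different $z$-heights do not have the same $\Pi^{\vu}$-projection for $\vu \in \{\ex,\ey\}$, since the plane $\vu^{\perp}$ contains the $\ez$-axis; so even if you had oppositely oriented pairs within a column, their projection and height data relative to a fixed $\ell \in t$ would not coincide. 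The appeal to Case~\ref{case:d1parz} of Proposition~\ref{prop:flipsAndTrits} is also misplaced: there the two cancelling dominoes are perpendicular to $\vu$ and lie in $\vu$-adjacent floors, which is a different configuration.

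The paper's cancellation instead fixes a dimer $\ell_0 \in t$ and pairs the $\tbasez$-dimers that interact with \emph{it}. Set $\vw = \vu \times \ez$; only $\ell_0$ parallel to $\vw$ can contribute (otherwise the determinant vanishes). Such an $\ell_0$ connects cubes in two $\vw$-adjacent columns, and the dimers $\ell \in \tbasez$ with $\Pi^{\vu}(\ell) \cap \Pi^{\vu}(\ell_0) \neq \emptyset$ come, for each value of the $\vu$-coordinate, in pairs lying in those two $\vw$-adjacent columns at the same $z$-level. The two members of each pair have opposite $\tv$ (the columns differ by one unit in $\vw$, flipping the color parity) and identical projection and $\vu$-height data relative to $\ell_0$, so their contributions cancel. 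This is what Figure~\ref{fig:example_boxEffects} depicts.
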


%\begin{lemma}
%\label{lemma:xEffectsCancelation}
%Let $\cB = [0,L] \times [0,M] \times [0,N]$ be a box, with $N$ even, and let $\tbasez$ be the tiling of $\cB$ such that every dimer is parallel to $\ez$.
%Let $t$ be a tiling of $\cB$. If $\Gamma^*(t,\tbasez) = \{\gamma_1, \ldots, \gamma_m\}$ and $\vu \in \Phi$, then  
%$$ T^{\vu}(t) = \sum_{1 \leq i,j \leq m} T^{\vu}(\gamma_i,\gamma_j). 
%$$
%%Therefore, $T^{\ex}(t) = T^{\ey}(t) = T^{\ez}(t)$. 
%\end{lemma}
\begin{proof}
By rotating, we may assume that $N$ is even, so that $\cB$ is a $\ez$-cylinder with even depth; let $\vu \in \Phi, \vu \perp \ez$. We want to show that $T^{\vu}(t) = T^{\ez}(t)$.

By Lemma \ref{lemma:xEffectsEqualsTwist}, 
$T^{\ez}(t) = T^{\vu}(t \sqcup (-\tbasez))$.
%
%The right hand-side is equal to $T^{\vu}(t \sqcup (-\tbasez))$, so we need to show that $T^{\vu}(t) = T^{\vu}(t \sqcup (-\tbasez))$. Since $\cB$ is a $\ez$-multiplex with even depth $N$, the result is obvious if $\vu = \pm \ez$: let us assume that $\vu \perp \ez$.
%
Now, 
$$T^{\vu}(t \sqcup (-\tbasez)) = T^{\vu}(t) + T^{\vu}(-\tbasez) + T^{\vu}(t, -\tbasez) + T^{\vu}(-\tbasez,t).$$ 
$T^{\vu}(-\tbasez) = 0$ because all segments of $(-\tbasez)$ are parallel. 
It remains to show that $T^{\vu}(t, -\tbasez) = T^{\vu}(-\tbasez,t) = 0$, which yields the result.
 
Let $\vw = \vu \times \ez$.
Given $\ell_0 \in t$, we now want to show that $\sum_{\ell \in \tbasez} \tau^{\vu}(\ell,\ell_0) = \sum_{\ell \in \tbasez} \tau^{\vu}(\ell_0,\ell) = 0$. 
This is obvious if $\ell_0$ is not parallel to $\vw$. 
Otherwise, effects cancel out, as illustrated in Figure \ref{fig:example_boxEffects}.
\begin{figure}[ht]%
\centering
\includegraphics[width=0.5\columnwidth]{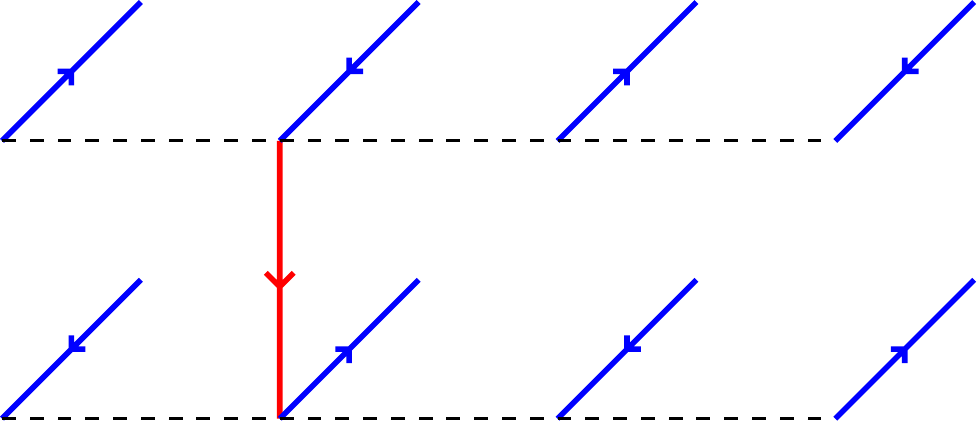}%
\caption{A dimer $\ell_0$ parallel to $\vw$, portrayed in red, and the pairs of segments (blue) of $\tbasez$ affected by it: $\vu$-effects cancel.}%
\label{fig:example_boxEffects}%
\end{figure} 
%Otherwise, suppose $\ell_0$ has endpoints $p$ and $\tilde{p}$ with $\tilde{p} - p = \pm \vw$.
%
%For each $k \in \ZZ$, $p + k\vu \in \cB$ if and only if $\tilde{p} + k \vu \in \cB$ (because $\cB$ is a box). For each such $k$, 
%let $\ell_k, \tilde{\ell}_k$ be the segments of $(-\tbasez)$ such that $\ell_k$ contains $p + k \vu$ and $\tilde{\ell}_k$ contains $\tilde{p} + k \vu$. Clearly $\tv(\ell_k) = - \tv(\tilde{\ell}_k)$, so that $\tau^{\vu}(\ell, \ell_k) = - \tau^{\vu}(\ell,\tilde{\ell}_k)$ and $\tau^{\vu}(\ell_k, \ell) = - \tau^{\vu}(\tilde{\ell}_k,\ell)$. Therefore,
%$$\sum_{\ell \in \tbasez} \tau^{\vu}(\ell,\ell_0) = \sum_{k < 0}  \tau^{\vu}(\ell_k, \ell_0) + \tau^{\vu}(\tilde{\ell}_k, \ell_0) = 0,$$
%and similarly $\sum_{\ell \in \tbasez} \tau^{\vu}(\ell_0,\ell) = 0.$ Therefore, $T^{\vu}(t, -\tbasez) = T^{\vu}(-\tbasez,t) = 0$, which completes the proof.
\end{proof}

If $Q \subset \pi$ is a basic square and $\vw \in \Delta$ is a normal vector for $\pi$, define the color of $Q$ to be the same as the color of the basic cube $Q - [0,1]\vw$; and 
$$\ccol(Q) =
\begin{cases}
1, &\text{if } Q \text{ is black;}\\
-1, &\text{if } Q \text{ is white.}
\end{cases}
$$

Recall the definition of $\vu$-shade from Section \ref{sec:combTwistBoxes}. If $A$ is a set of segments or a set of dominoes, $\vu \in \Phi$ and $Q$ is a basic square with normal $\vw \in \Delta$, we set
$$S(A,\vu,Q,n) = \{\ell \in A \,|\, \ell \cap \cS^{\vu}(Q + [0,n]\vw) \neq \emptyset\}.$$

\begin{lemma}
\label{lemma:alternativeFluxFormula}
Let $\cR$ be a $\vw$-cylinder ($\vw \in \Delta$) with base $\cD \subset \pi$ and even depth $N$. Let $Q \subset \pi$ be a basic square, $Q \not\subset \cD$, let $t$ be a tiling of $\cR$ and let $\vu \in \Phi$.
Then
$$\sum_{d \in S(t, \vu,Q,N)} \det(\tv(d), \vw,\vu) = 0.$$  
\end{lemma}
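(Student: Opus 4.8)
The plan is to reduce to one fixed pair of directions and then show that the quantity in question is unchanged by flips and trits.

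\emph{Reductions.} If $\vu = \pm\vw$ then $\det(\tv(d),\vw,\vu)=0$ for every domino and there is nothing to prove, so assume $\vu\perp\vw$. The sum $\sum_{d\in S(t,\vu,Q,N)}\det(\tv(d),\vw,\vu)$ is unchanged by a rotation of $\RR^3$ preserving $\ZZ^3$ (it sends $\cR$, $t$, $Q$ to a cylinder, a tiling and a basic square, and $\det(\tv(d),\vw,\vu)$ is unaffected since the rotation has determinant $1$), only changes sign under such a reflection, and is negated when $\vu$ is replaced by $-\vu$. Hence we may take $\vw=\ez$, $\vu=\ex$, and $Q=[a,a+1]\times[b,b+1]$ in the base plane. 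Then $\det(\tv(d),\ez,\ex)=\tv(d)\cdot\ey$, which vanishes unless $d$ is parallel to $\ey$, and a direct look at $\cS^{\ex}(Q+[0,N]\ez)=\{\,x>a+1,\ b<y<b+1,\ 0<z<N\,\}$ shows that $S(t,\ex,Q,N)$ consists exactly of: the dominoes of $t$ parallel to $\ey$ lying "to the right" of $Q$ (whose $x$-level is at least $a+1$) and straddling one of the two planes $y=b$, $y=b+1$; together with some dominoes parallel to $\ex$ or $\ez$, which contribute $0$. Write $\Psi(t)$ for the sum. Since $\cR$ has even depth, $\tbase=\tbase(\cR)$ is defined, and as every domino of $\tbase$ is parallel to $\vw=\ez$ we have $\Psi(\tbase)=0$.

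\emph{Flip/trit invariance.} I would then show $\Psi(t_0)=\Psi(t_1)$ whenever $t_1$ is obtained from $t_0$ by one flip or one trit. Moves whose supporting box does not meet $\cS^{\vu}(Q+[0,N]\vw)$ clearly leave $\Psi$ fixed, so only moves near the column matter. For a flip, any two $\ey$-parallel dominoes destroyed lie in a $2\times2\times1$ slab, hence share a $y$-level and have opposite values of $\tv(\cdot)\cdot\ey$; if both are counted in $S$ they cancel, and likewise for the created ones, so $\Psi$ does not change. One of them could be counted while the other is not only if the slab has one of its columns at $x$-level $a$ and its $y$-levels meeting $\{b-1,b\}$ — but then a cube of the slab projects onto the square $Q$, forcing $Q\subset\cD$, contrary to hypothesis. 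For a trit, the unique $\ey$-domino destroyed and the unique $\ey$-domino created straddle the same $y$-plane and have the same value of $\tv(\cdot)\cdot\ey$; their $x$-levels differ by $1$, so their $S$-status agrees unless that amounts to "$x$-level $a$ versus $x$-level $a+1$ across a $y$-level in $\{b-1,b\}$", which again forces a cube of the trit's $2\times2\times2$ box to project onto $Q$, i.e. $Q\subset\cD$ — impossible. Thus $\Psi$ is a flip and trit invariant; here the hypothesis $Q\not\subset\cD$ is exactly what rules out the exceptional configurations.

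\emph{Conclusion.} Any two tilings of the simply connected region $\cR$ are joined by a finite sequence of flips and trits, so $\Psi(t)=\Psi(\tbase)=0$, as claimed. One can also dispense with this connectivity input: $\Psi(t)$ equals the algebraic intersection number of the closed curves $\Gamma(t,\tbase)$ with the surface made up of the two "curtains" $\{y=b\}$ and $\{y=b+1\}$ over $\{x\ge a+1\}\cap\cR$, capped off along $x=a+1$; no domino can cross the cap, precisely because the square $Q=[a,a+1]\times[b,b+1]$ is not in $\cD$, so the capped surface is a relative $2$-cycle of $(\cR,\partial\cR)$, which is null-homologous as $\cR$ is contractible, and the intersection number therefore vanishes.

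\emph{Main obstacle.} The bulk of the work is the flip/trit case analysis of the middle step: running through the three types of flip and the several orientations of a trit and checking in each that the $\ey$-dominoes involved either pair off with cancelling contributions or share their membership in $S$, with $Q\not\subset\cD$ producing the key contradiction in the borderline cases. In the alternative topological route the delicate point shifts to verifying that the capped surface really has all of its boundary in $\partial\cR$, which requires a short check on which neighbours of $Q$ belong to $\cD$.
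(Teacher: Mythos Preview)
Your primary route has a fatal gap: the claim that any two tilings of the simply connected region $\cR$ are joined by a finite sequence of flips and trits is false for cylinders in general. The paper itself exhibits explicit counterexamples in Section~\ref{sec:examples} (Figure~\ref{fig:notConnectedByLocalMoves}), including a cylinder of even depth whose flip-and-trit connected component contains only two tilings. Your verification that $\Psi$ is a flip/trit invariant looks correct, but invariance alone cannot pin down $\Psi(t)$ without connectivity, so the argument does not close.

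Your brief alternative at the end is essentially the argument the paper actually gives, only phrased upstairs in $\RR^3$ rather than after projecting. The paper observes that since every dimer of $\tbasew$ is parallel to $\vw$, the sum over $S(t,\vu,Q,N)$ equals the corresponding sum over the segments of the curves $\gamma\in\Gamma^*(t,\tbasew)$; projecting each such $\gamma$ orthogonally onto the base plane $\pi$, that sum is exactly $2\,\wind(\Pi\circ\gamma,p_Q)$, the winding number of the projected closed polygonal curve around the center $p_Q$ of $Q$. Because $Q\not\subset\cD$ and $\cD$ is simply connected, every such winding number vanishes. This is precisely your ``intersection number with a null-homologous relative $2$-cycle'' idea pushed down to two dimensions, where it becomes a one-line appeal to winding numbers instead of a construction of curtains and caps together with a check that the cap's boundary lies in $\partial\cR$. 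So: drop the flip/trit route entirely and develop the topological alternative --- preferably via the projection, which is both shorter and avoids the boundary bookkeeping you flag as the delicate point.
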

\begin{figure}[ht]%
\centering
\def\svgwidth{0.6\columnwidth}
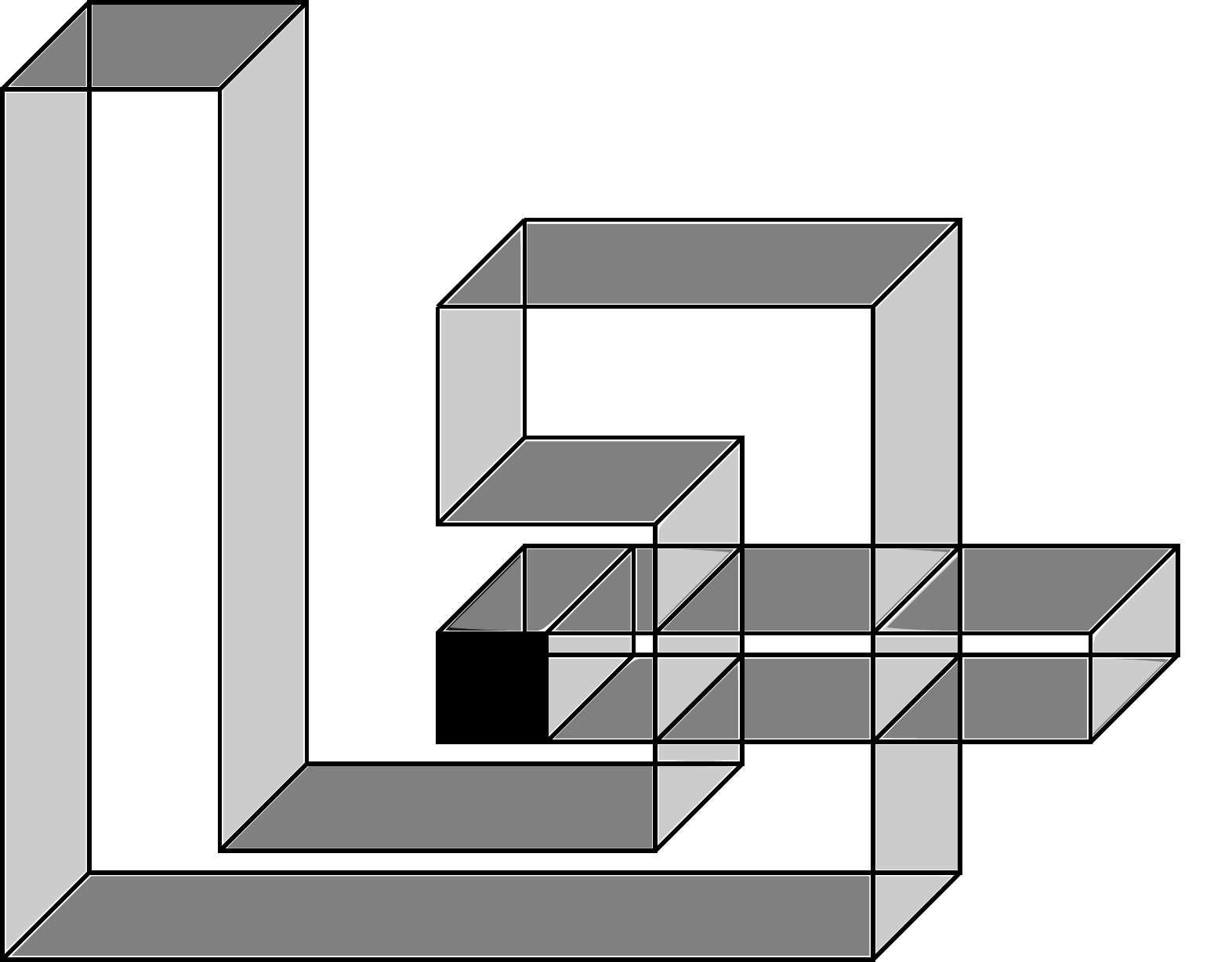
\caption{A cylinder $\cR$ with base $\cD \subset \pi$ and depth $N$, a basic square $Q \subset \pi$, $Q \not\subset \cD$ and the shade $S^{\vu}(Q + [0,N]\vw)$. }%
\label{fig:squareWalls}%
\end{figure}
\begin{proof}
The reader may want to follow by looking at Figure \ref{fig:squareWalls}.
Let $\tbasew = \tbasew(\cR)$, $S_t = S(t,\vu, Q,N)$, and for each $\gamma \in \Gamma^*(t,\tbasew)$, 
let $S_\gamma$ denote $S(\gamma,\vu,Q,N)$.
Clearly,
$$ \sum_{d \in S_t} \det(\tv(d), \vw,\vu) = \sum_{\substack{\gamma \in \Gamma^*(t,\tbasew) \\ \ell \in S_\gamma}} \det(\tv(\ell),\vw,\vu)).$$
Let $p_Q$ be the center of the square $Q$, and let $\Pi$ denote the orthogonal projection on $\pi$. For each $\gamma \in \Gamma^*(t,\tbasew)$, $\Pi \circ \gamma$ is a polygonal curve, so that the winding number of $\gamma$ around $p_Q$ equals (see, e.g., \cite{polygonalWindingNumber} for an algorithmic discussion of winding numbers)
\begin{align*} \wind(\Pi \circ \gamma, p_Q) &= \half\left(\#\{\ell \in S_\gamma \,|\, \tv(\ell) = \vw \times \vu\} - \#\{\ell \in S_\gamma \,|\, \tv(\ell) = -\vw \times \vu\}\right)\\
&= \half \sum_{\ell \in S_\gamma} \det(\tv(\ell), \vw,\vu).
\end{align*}
But $\wind(\Pi \circ\gamma, p_Q) = 0$ ($p_Q \notin \cD$ and $\cD$ is simply connected), so we get the result.
\end{proof}

\begin{prop}
\label{prop:equalTwistsIntNEven}
Let $N \in \NN$ be even, and suppose $\cR$ is a cylinder with depth $N$. If $t$ is a tiling of $\cR$, then $T^{\ex}(t) = T^{\ey}(t) = T^{\ez}(t) \in \ZZ$.
\end{prop}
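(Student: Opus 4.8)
The plan is to reduce the general case to the box case established in Lemma~\ref{lemma:allPretwistsBoxes} by embedding the cylinder $\cR$ into a suitable box and controlling the discrepancy in pretwists contributed by the complement. Write $\cR = \cD + [0,N]\vw$ with $\vw \in \Delta$ and $N$ even; without loss of generality take $\vw = \ez$, so $\cR = \cD \times [0,N]$ with $\cD \subset \RR^2$ a simply connected planar region. Choose a box $\cB = \cD^{\Box} \times [0,N] \supset \cR$ where $\cD^{\Box}$ is a rectangle containing $\cD$. Since $N$ is even, $\cB \setminus \cR = (\cD^{\Box} \setminus \cD) \times [0,N]$ admits the tiling $t_*$ in which every dimer is parallel to $\ez$; moreover $\cB$ itself is a $\ez$-cylinder of even depth. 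For $\vu \in \Phi$ with $\vu \perp \ez$ we must show $T^{\vu}(t) = T^{\ez}(t)$; by Lemma~\ref{lemma:xEffectsEqualsTwist} applied to $\cR$ we have $T^{\ez}(t) = T^{\vu}(t \sqcup (-\tbasez(\cR)))$, and the analogous statement holds for $\cB$.

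The key step is to compare $T^{\vu}$ computed on $\cR$ with $T^{\vu}$ computed on the enclosing box $\cB$. Since $t \sqcup t_*$ is a tiling of $\cB$, expanding $T^{\vu}(t \sqcup t_*)$ bilinearly gives
$$T^{\vu}(t \sqcup t_*) = T^{\vu}(t) + T^{\vu}(t_*) + T^{\vu}(t, t_*) + T^{\vu}(t_*, t).$$
Because $t_*$ consists entirely of dimers parallel to $\ez = \vw$, we have $T^{\vu}(t_*) = 0$ and, using the shade description of $\tau^{\vu}$, the cross terms $T^{\vu}(t,t_*) + T^{\vu}(t_*,t)$ are exactly the quantities controlled by Lemma~\ref{lemma:alternativeFluxFormula}: a dimer $\ell_0 \in t$ parallel to $\vw = \vu \times \ez$ (the only type that can interact with the $\ez$-parallel dimers of $t_*$ along $\vu$) sees, summed over $t_*$, a signed count of squares $Q \not\subset \cD$ lying in its $\vu$-shade, and that signed count is $\sum_{d} \det(\tv(d),\vw,\vu)$-type data which vanishes by the winding-number argument since $\cD$ is simply connected. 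Hence $T^{\vu}(t \sqcup t_*) = T^{\vu}(t)$, and likewise $T^{\ez}(t \sqcup t_*) = T^{\ez}(t)$ by the same reasoning with $\vu$ replaced by $\ez$ (where the cross terms already vanish directly since $\ez$-parallel dimers have zero effect along $\ez$).

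It then remains to invoke Lemma~\ref{lemma:allPretwistsBoxes} on the box $\cB$ (which has the even dimension $N$): $T^{\vu}(t \sqcup t_*) = T^{\ez}(t \sqcup t_*)$. Combining the three equalities yields $T^{\vu}(t) = T^{\ez}(t)$; ranging over $\vu \in \{\ex, \ey\}$ (or their negatives, via Lemma~\ref{lemma:twistNegatingDirection}) gives $T^{\ex}(t) = T^{\ey}(t) = T^{\ez}(t)$, and integrality follows from Proposition~\ref{prop:writheFormula}. The main obstacle I expect is the bookkeeping in the cross-term step: one must verify carefully that the only dimers of $t$ with nonzero $\vu$-effect on (or from) $t_*$ are those parallel to $\vw = \vu \times \ez$, and that the resulting sum matches the hypothesis of Lemma~\ref{lemma:alternativeFluxFormula} exactly — in particular that replacing a whole $\ez$-column of $t_*$-dimers over a square $Q \not\subset \cD$ produces the determinant sum $\det(\tv(\ell_0), \vw, \vu)$ with the correct sign and that $S(t,\vu,Q,N)$ is the right index set. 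Everything else is a formal consequence of results already proved.
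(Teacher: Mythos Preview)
Your approach is essentially the paper's: embed $\cR$ in a box $\cB$, tile $\cB\setminus\cR$ by $t_*$ with all dimers parallel to the axis, use Lemma~\ref{lemma:allPretwistsBoxes} on $\cB$, and kill the cross terms $T^{\vu}(t,t_*)+T^{\vu}(t_*,t)$ via Lemma~\ref{lemma:alternativeFluxFormula}, with integrality from Proposition~\ref{prop:writheFormula}. Two small cleanups: the appeal to Lemma~\ref{lemma:xEffectsEqualsTwist} in your first paragraph is never used and can be dropped; and you write ``$\vw=\vu\times\ez$'' for the direction of the interacting dimers of $t$, but you already set $\vw=\ez$ --- the correct statement is that the relevant dimers are parallel to $\vu\times\ez$ (the third direction), and in the paper's organization one fixes the square $Q$ (hence the column $t_Q\subset t_*$) and sums over $d\in S(t,\vu,Q,N)$, which is exactly the hypothesis of Lemma~\ref{lemma:alternativeFluxFormula}.
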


\begin{proof}
Suppose $\cR = \cD + [0,N]\vw$, where $\cD \subset \pi$ is simply connected and $\vw \in \Delta$ is the axis of the cylinder. 
Let $\cA \subset \pi$ be a rectangle with vertices in $\ZZ^3$ such that $\cD \subset \cA$: this implies that the box
$\cB = \cA + [0,N]\vw \supset \cR$. Let $\vu \in \Phi, \vu \perp \vw$. We want to show that $T^{\vu}(t) = T^{\vw}(t)$.

Let $t$ be a tiling of $\cR$, and let $t_*$ be the tiling of $\cB \setminus \cR$ such that every dimer is parallel to $\vw$. Applying Lemma \ref{lemma:allPretwistsBoxes} to the box $\cB$, we see that $T^{\vu}(t \sqcup t_*) = T^{\vw}(t)$: it remains to show that $T^{\vu}(t \sqcup t_*) - T^{\vu}(t) = 0$.  

Let $\tbasew$ be the tiling of $\cR$ such that every domino is parallel to $\vw$, and let $Q \subset \pi$ be a basic square such that $\interior(Q) \subset \cA \setminus \cD$. Let $t_Q$ be the set of $N/2$ dominoes of $t_*$ contained in $Q + [0,N]\vw$: we have 
$$T^{\vu}(t \sqcup t_*) - T^{\vu}(t) = T^{\vu}(t,t_*) + T^{\vu}(t_*,t) = \sum_{\interior(Q) \subset \cA \setminus \cD} T^{\vu}(t_Q, t) + T^{\vu}(t,t_Q).$$ 
Notice that, for every domino $d \in t_Q$, $\tv(d) = \ccol(Q) \vw$. Moreover, the dominoes in $S_{t,\vu} = S(t,\vu,Q,N)$ are precisely the ones that intersect the $\vu$-shade of at least one domino of $t_Q$, so that
$$T^{\vu}(t_Q, t) = \frac{1}{4}\sum_{d \in S_{t,\vu}} \det(\tv(d),\ccol(Q)\vw,\vu) = \frac{\ccol(Q)}{4}\sum_{d \in S_{t,\vu}} \det(\tv(d),\vw,\vu),$$
which equals $0$ by Lemma \ref{lemma:alternativeFluxFormula}.
Analogously (the first equality below uses Lemma \ref{lemma:twistNegatingDirection}),
$$T^{\vu}(t,t_Q) = T^{-\vu}(t_Q,t) = \frac{\ccol(Q)}{4}\sum_{d \in S(t,-\vu,Q,n)} \det(\tv(d),\vw,-\vu) = 0.$$
%so that
%$$\sum_{\interior(Q) \subset \cA \setminus \cD} T^{\vu}(t_Q, t) = \sum_{\interior(Q) \subset \cA \setminus \cD} T^{\vu}(t,t_Q) = 0.$$
Since $T^{\vw}(t) \in \ZZ$ (by Proposition \ref{prop:writheFormula}), we have completed the proof. 
\end{proof}
\begin{lemma}
\label{lemma:equalTwistsNOdd}
Let $N \in \ZZ$ be odd, and let $\cR$ be a cylinder with depth $N$ that admits a tiling $t$. Then $T^{\ex}(t) = T^{\ey}(t) = T^{\ez}(t) \in \half\ZZ$.
\end{lemma}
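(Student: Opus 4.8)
The plan is to reduce the odd-depth case to the even-depth case (Proposition~\ref{prop:equalTwistsIntNEven}) by \emph{doubling} the cylinder along its axis. Write $\cR = \cD + [0,N]\vw$ with $\vw \in \Delta$ and $\cD \subset \pi$ simply connected, and, after a translation (which does not change pretwists), assume $\cD \subset \{p : p\cdot\vw = 0\}$. I would then set $\cR^+ = \cD + [0,2N]\vw$, a cylinder of even depth $2N$: if $t$ tiles $\cR$ then $t + N\vw$ tiles $\cD + [N,2N]\vw$, so $t^+ := t \sqcup (t + N\vw)$ tiles $\cR^+$. By Proposition~\ref{prop:equalTwistsIntNEven}, $T^{\ex}(t^+) = T^{\ey}(t^+) = T^{\ez}(t^+) \in \ZZ$. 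Thus it suffices to prove that $T^{\vu}(t^+) = 2\,T^{\vu}(t)$ for each $\vu \in \Delta$, for then $T^{\ex}(t) = T^{\ey}(t) = T^{\ez}(t) = \half\, T^{\vw}(t^+) \in \half\ZZ$.

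Expanding the double sum defining $T^{\vu}(t^+)$ according to the partition $t^+ = t \sqcup (t+N\vw)$ gives
$$T^{\vu}(t^+) = T^{\vu}(t) + T^{\vu}(t+N\vw) + T^{\vu}(t, t+N\vw) + T^{\vu}(t+N\vw, t),$$
where $T^{\vu}(A,B) = \sum_{d_0 \in A,\, d_1 \in B}\tau^{\vu}(d_0,d_1)$. Since $N\vw \in \ZZ^3$, translation invariance of the pretwist (the remark following Lemma~\ref{lemma:twistNegatingDirection}) gives $T^{\vu}(t+N\vw) = T^{\vu}(t)$, so the claim reduces to showing that the two cross terms $T^{\vu}(t, t+N\vw)$ and $T^{\vu}(t+N\vw, t)$ vanish for every $\vu \in \Delta$.

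For $\vu \perp \vw$ this is a disjointness-of-supports argument: both $X \mapsto \cS^{\vu}(X)$ and the projection $\Pi^{\vu}$ preserve the $\vw$-coordinate. Every domino of $t$ lies in $\{0 \le p\cdot\vw \le N\}$, and, because the shade is \emph{open}, every point of $\cS^{\vu}(d_0)$ then has $\vw$-coordinate strictly less than $N$; but every point of a domino of $t+N\vw$ has $\vw$-coordinate at least $N$. Hence $d_1 \cap \cS^{\vu}(d_0) = \emptyset$ and $\tau^{\vu}(d_0,d_1)=0$ for $d_0\in t$, $d_1\in t+N\vw$, and symmetrically $\tau^{\vu}(d_1,d_0)=0$; both cross terms vanish. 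For $\vu = \vw$, the term $\tau^{\vw}(d_1,d_0)$ with $d_1\in t+N\vw$, $d_0\in t$ vanishes because $\cS^{\vw}(d_1)$ consists of points of $\vw$-coordinate greater than $N$ while $d_0$ has $\vw$-coordinate at most $N$; so $T^{\vw}(t+N\vw,t) = 0$. For the remaining term, write $d_1 = d_1' + N\vw$ with $d_1'\in t$, and note $\tv(d_1'+N\vw) = (-1)^N\tv(d_1')$ (translating by $N\vw$ negates $\tv$ exactly when $N$ is odd, since it swaps the two colors). A domino of the upper copy always lies above a domino of the lower copy, with the chimney $\cS^{\vw}(d_0)$ extending past it, so the intersection condition in the definition of $\tau^{\vw}$ reduces to: $d_0$ and $d_1'$ are non-parallel dominoes, neither parallel to $\vw$, with $\interior(\Pi^{\vw}(d_0)) \cap \interior(\Pi^{\vw}(d_1')) \neq \emptyset$; in that case $\tau^{\vw}(d_0, d_1'+N\vw) = \frac{1}{4}(-1)^N\det(\tv(d_1'),\tv(d_0),\vw)$. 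The set of pairs $(d_0,d_1')$ so described is invariant under the swap $(d_0,d_1')\mapsto(d_1',d_0)$, while the summand is antisymmetric under that swap; hence $T^{\vw}(t, t+N\vw) = 0$. Combining all of this yields $T^{\vu}(t^+) = 2\,T^{\vu}(t)$ for each $\vu \in \Delta$, finishing the proof.

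The step I expect to need the most care is the bookkeeping at the separating hyperplane $\{p\cdot\vw = N\}$: one must genuinely exploit the openness of $\cS^{\vu}$ to guarantee that the supports are disjoint there (they touch along that plane), and one must pin down exactly which pairs of dominoes contribute to $T^{\vw}(t,t+N\vw)$ before invoking the antisymmetry cancellation. Note that oddness of $N$ is not actually used anywhere; for even $N$ the sharper statement $T^{\vu}(t)\in\ZZ$ is already Proposition~\ref{prop:equalTwistsIntNEven}.
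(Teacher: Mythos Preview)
Your proposal is correct and follows essentially the same route as the paper: double the cylinder to even depth, invoke Proposition~\ref{prop:equalTwistsIntNEven} on the doubled tiling, and show $T^{\vu}(t^+) = 2T^{\vu}(t)$ by handling $\vu \perp \vw$ (disjoint shades) and $\vu = \vw$ (vanishing cross terms) separately. The only cosmetic difference is in the $\vu = \vw$ cross term: you kill $T^{\vw}(t,t+N\vw)$ by a direct ``symmetric index set, antisymmetric summand'' pairing, whereas the paper rewrites it as $T^{-\vw}(t) - T^{\vw}(t)$ and appeals to Lemma~\ref{lemma:twistNegatingDirection}; these are the same cancellation.
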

In fact, we prove in Proposition \ref{prop:equalTwistsIntNOdd} that $T^{\ex}(t) = T^{\ey}(t) = T^{\ez}(t) \in \ZZ$, but for our proof this first step is needed. Also, it is not clear when a cylinder with odd depth $N$ is tileable: see Lemma \ref{lemma:treeConstruction} for a related result.
\begin{proof}
Suppose $\cR$ has base $\cD$ and axis $\vw \in \Delta$, so that $\cR = \cD + [0,N] \vw$, and let $\vu \in \Phi$, $\vu \perp \vw$. 
Let $t$ be a tiling of $\cR$. We want to show that $T^{\vu}(t) = T^{\vw}(t)$.

Consider $\cR' = \cD + [0,2N]\vw$, and the tiling $\hat{t} = t_0 \sqcup t_1$ of $\cR'$ which consists of two copies $t_0$ and $t_1$ of $t$, where $t_0$ tiles the subregion $\cD + [0,N]\vw$ and $t_1$ tiles the subregion $\cD + [N,2N]\vw$.

%We first note that $\phi(\hat{t}) = 0$. To see this, let $d \in t$, and let $d_0 \in t_0, d_1 \in t_1$ be the copies of $d$ in each copy of $t$. Consider a square $Q$ not contained in $\cD$, and define $S(\vu,Q)$ as in Lemma \ref{lemma:alternativeFluxFormula}. Then $d_0 \in S(\ex,Q) \Leftrightarrow d_1 \in S(\ex,Q)$ and $\tv(d_0) = - \tv(d_1)$, so the sum in Lemma \ref{lemma:alternativeFluxFormula} equals $0$ for $\hat{t}$.  

By Proposition \ref{prop:equalTwistsIntNEven}, $T^{\vu}(\hat{t}) = T^{\vw}(\hat{t}) \in \ZZ$. Now clearly $T^{\vu}(\hat{t}) = 2 T^{\vu}(t)$, because the $\vu$-shades of dimers of $t_0$ do not intersect dimers of $t_1$ (and vice-versa). We need to prove that $T^{\vw}(\hat{t}) = 2 T^{\vw}(t)$. 

Notice that $T^{\vw}(\hat{t}) = T^{\vw}(t_0) + T^{\vw}(t_1) + T^{\vw}(t_0,t_1) = 2T^{\vw}(t) + T^{\vw}(t_0,t_1)$. Let $d_0 \in t_0$, $d_1 \in t_1$ be dominoes, and let $\tilde{d}_0$ and $\tilde{d}_1$ be the dominoes of $t$ that they ``refer to''. If $\tilde{d}_0 \neq \tilde{d}_1$, then clearly
$$d_1 \cap \cS^{\vw}(d_0) \neq \emptyset \Leftrightarrow \tilde{d}_1 \cap (\cS^{\vw}(\tilde{d}_0) \cup \cS^{-\vw}(\tilde{d}_0)) \neq \emptyset $$
and $\tau^{\vw}(d_0,d_1) = \frac{1}{4}\det(\tv(d_1), \tv(d_0), \vw) = - \frac{1}{4}\det(\tv(\tilde{d}_1), \tv(\tilde{d}_0), \vw) = \tau^{-\vw}(\tilde{d}_0,\tilde{d}_1) - \tau^{\vw}(\tilde{d}_0,\tilde{d}_1).$
Therefore,
$T^{\vw}(t_0,t_1) = \sum_{d,d' \in t}\tau^{-\vw}(d,d') - \tau^{\vw}(d,d') = 0$. Consequently, $T^{\vw}(\hat{t}) = 2 T^{\vw}(t)$ and thus $T^{\vu}(t) = T^{\vw}(t)$.

 Moreover, since $T^{\vw}(t) = T^{\vw}(\hat{t})/2$ and $T^{\vw}(\hat{t}) \in \ZZ$, it follows that $T^{\vw}(t) \in \half\ZZ$, which completes the proof.
\end{proof}

%If $n$ is even and $t$ is a tiling of a cylinder with depth $n$, we already know that $T^{\ex}(t), T^{\ey}(t),T^{\ez}(t) \in \ZZ$ (use Lemma \ref{lemma:writheDifference} and the fact that $\phi(t) \in \ZZ$). We now wish to show that the same holds when $n$ is odd.

\begin{lemma}
\label{lemma:integerTwistDifference}
Let $\cD \subset \pi$ be a planar region, and let $\vw \in \Delta$ be the normal vector for $\pi$. For each $k \in \NN$, write $\cR_k = \cD + [0,2k+1]\vw$. If $k_1, k_2 \in \NN$, then for each $\vu \in \Phi$ and every pair of tilings $t_1$ of $\cR_{k_1}$, $t_2$ of $\cR_{k_2}$, $T^{\vu}(t_1) - T^{\vu}(t_2) \in \ZZ$.
\end{lemma}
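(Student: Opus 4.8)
\emph{Proof plan.}
Since $\cR_{k_1}$ and $\cR_{k_2}$ are cylinders with the same axis $\vw\in\Delta$, Lemmas~\ref{lemma:twistNegatingDirection} and~\ref{lemma:equalTwistsNOdd} give $T^{\vu}(t_i)=T^{\vw}(t_i)\in\tfrac12\ZZ$ for every $\vu\in\Phi$; hence it suffices to prove $T^{\vw}(t_1)-T^{\vw}(t_2)\in\ZZ$, i.e.\ to rule out a genuine half-integer. The plan is to assemble $t_1$ and a mirrored copy of $t_2$ into a single cylinder of \emph{even} depth, for which the pretwist is already an integer by Proposition~\ref{prop:equalTwistsIntNEven}.

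Normalize so that $\cD\subset\{p:p\cdot\vw=0\}$, let $r=r_{\vw}$ be the reflection of Lemma~\ref{lemma:twistReflection}, and set $\bar t_2=r(t_2)+(2k_1+2k_2+2)\vw$. Then $\bar t_2$ tiles $\cD+[2k_1+1,2(k_1+k_2+1)]\vw$, so $\hat t=t_1\sqcup\bar t_2$ tiles the even-depth cylinder $\cR'=\cD+[0,2(k_1+k_2+1)]\vw$, and $T^{\vw}(\bar t_2)=-T^{\vw}(t_2)$ by Lemma~\ref{lemma:twistReflection} together with translation invariance of the pretwist. Expanding,
\[
T^{\vw}(\hat t)=T^{\vw}(t_1)+T^{\vw}(\bar t_2)+T^{\vw}(t_1,\bar t_2)+T^{\vw}(\bar t_2,t_1).
\]
Every point of a domino of $\bar t_2$ has $\vw$-coordinate at least $2k_1+1$, while $\cS^{\vw}(d_0)$ consists of points whose $\vw$-coordinate strictly exceeds the maximal one attained on $d_0$; since every point of $t_1$ has $\vw$-coordinate at most $2k_1+1$, no domino of $t_1$ meets $\cS^{\vw}(d_0)$ for $d_0\in\bar t_2$, so $T^{\vw}(\bar t_2,t_1)=0$. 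As $\cR'$ is an even-depth cylinder, $T^{\vw}(\hat t)\in\ZZ$ by Proposition~\ref{prop:equalTwistsIntNEven}, whence
\[
T^{\vw}(t_1)-T^{\vw}(t_2)=T^{\vw}(\hat t)-T^{\vw}(t_1,\bar t_2),
\]
and the whole statement reduces to showing that the cross term $C:=T^{\vw}(t_1,\bar t_2)$ is an integer (a priori it lies only in $\tfrac12\ZZ$).

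To study $C$, fix $\beta=(\vbeta_1,\vbeta_2,\vbeta_3)\in\bB$ with $\vbeta_3=\vw$ and reorganize the defining sum column by column over $\cD$. Because each $d_1\in\bar t_2$ lies strictly above each $d_0\in t_1$, a pair $(d_0,d_1)$ contributes $\pm\tfrac14$ to $C$ exactly when neither domino is parallel to $\vw$ and the planar dominoes $\Pi^{\vw}(d_0),\Pi^{\vw}(d_1)$ share a single basic square $Q\subset\cD$ on which they are perpendicular, and contributes $0$ otherwise. Writing, for a tiling $t$ and $i\in\{1,2\}$,
\[
a_i^{Q}(t)=\sum_{\substack{d\in t,\ Q\subset\Pi^{\vw}(d)\\ \tv(d)=\pm\vbeta_i}}\tv(d)\cdot\vbeta_i\ \in\ \ZZ
\]
(the signed number of dominoes of $t$ not parallel to $\vw$ that cross column $Q$ in the $\vbeta_i$-direction), and observing that $r_{\vw}$ fixes $\vw^{\perp}$ pointwise — so that, by Lemma~\ref{lemma:twistReflection} and the fact that the translation above is by an even multiple of $\vw$, one has $a_i^{Q}(\bar t_2)=-a_i^{Q}(t_2)$ — a direct computation collapses the double sum to
\[
4C=\sum_{Q\subset\cD}\bigl(a_1^{Q}(t_1)\,a_2^{Q}(t_2)-a_2^{Q}(t_1)\,a_1^{Q}(t_2)\bigr).
\]

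It remains to prove that the right-hand side is divisible by $4$. The elementary ingredient is that each column of $\cR_{k}$ contains the \emph{odd} number $2k+1$ of cubes, of which an even number are matched to cubes of the same column (in $\vw$-dominoes); hence the number of cubes of the column that lie in dominoes not parallel to $\vw$ is odd, and since $a_1^{Q}(t)+a_2^{Q}(t)$ has exactly that parity, precisely one of $a_1^{Q}(t),a_2^{Q}(t)$ is odd, for every $Q$ and every $t$. This already forces the displayed sum to be even; the step I expect to be the main obstacle is upgrading it to divisibility by $4$. This should follow by an argument exploiting the simple connectivity of $\cD$, parallel to the proof of Lemma~\ref{lemma:alternativeFluxFormula}: the quantities $a_i^{Q}$ are built from fluxes through the walls of the column $Q$, the flux data attached to $t_1$ and to $t_2$ differ by an ``exact'' field over the simply connected base $\cD$, and the wedge-type sum above then telescopes to a boundary contribution that vanishes modulo $4$. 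Making this last reduction precise is where the real work lies.
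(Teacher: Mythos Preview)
Your argument is incomplete: you explicitly leave the divisibility-by-$4$ step unproved, and the sketch you offer (flux data being ``exact'' over the simply connected base, telescoping to a boundary term) is too vague to count as a proof. Also, the parity observation that exactly one of $a_1^Q(t),a_2^Q(t)$ is odd does \emph{not} by itself force $\sum_Q\bigl(a_1^Q(t_1)a_2^Q(t_2)-a_2^Q(t_1)a_1^Q(t_2)\bigr)$ to be even: a single summand is odd precisely when the ``odd coordinate'' for $t_1$ and for $t_2$ at $Q$ differ, and nothing in your argument controls the parity of the number of such $Q$. (The evenness does hold, but only for the extraneous reason that you already know $C\in\tfrac12\ZZ$.)

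More importantly, you have made the problem much harder than necessary by choosing to compute with $\vu=\vw$. The paper instead picks $\vu\perp\vw$. One then stacks $t_1$ on top of a \emph{translated} (not reflected) copy $\tilde t_2$ of $t_2$ to obtain a tiling $t=t_1\sqcup\tilde t_2$ of the even-depth cylinder $\cD+[0,2k_1+2k_2+2]\vw$. Because $\vu$-shades stay in the same $\vw$-slab as the domino that casts them, the cross terms $T^{\vu}(t_1,\tilde t_2)$ and $T^{\vu}(\tilde t_2,t_1)$ vanish outright, so $T^{\vu}(t)=T^{\vu}(t_1)+T^{\vu}(t_2)$. Hence
\[
T^{\vu}(t_1)-T^{\vu}(t_2)=T^{\vu}(t)-2\,T^{\vu}(t_2)\in\ZZ
\]
by Proposition~\ref{prop:equalTwistsIntNEven} and Lemma~\ref{lemma:equalTwistsNOdd}. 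No reflection, no cross-term analysis, and no column-by-column bookkeeping is needed; the entire difficulty you set yourself disappears with the right choice of projection direction.
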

Should $\cR_{k_1}$ or $\cR_{k_2}$ not be tileable, the statement is vacuously true.
\begin{proof}
By Lemma \ref{lemma:equalTwistsNOdd}, it suffices to show the result for $\vu \perp \vw$.
Let $t_1$ and $t_2$ be tilings of $\cR_{k_1}$ and $\cR_{k_2}$, respectively. Consider the cylinder with even depth $\cR = \cD + [0,2k_1 + 2k_2 + 2]\vw$, and let $\tilde{t}_2$ denote the tiling of $\cD + [2k_1 + 1,2k_1 + 1 + 2k_2 + 1]\vw$ which is a copy of $t_2$. If $t = t_1 \sqcup \tilde{t}_2$, then $T^{\vu}(t) \in \ZZ$, by Proposition \ref{prop:equalTwistsIntNEven}. Also, since $\vu \perp \vw$, $T^{\vu}(t) = T^{\vu}(t_1) + T^{\vu}(\tilde{t}_2) = T^{\vu}(t_1) + T^{\vu}(t_2)$, so that  
$$ T^{\vu}(t_1) - T^{\vu}(t_2) = T^{\vu}(t_1) + T^{\vu}(t_2) - 2T^{\vu}(t_2) = T^{\vu}(t) - 2T^{\vu}(t_2).$$ 
Since, by Lemma \ref{lemma:equalTwistsNOdd}, $2T^{\vu}(t_2) \in \ZZ$, we're done. 
%
%We first show that the result holds when $k_1 = k_2 = k$:
%by Proposition \ref{prop:equalTwists}, it suffices to show that $T^{\ex}(t_1) - T^{\ex}(t_2) \in \ZZ$. Let $\tilde{t}_0$ be any tiling of $\cR_k = \cD \times [0,2k+1]$, and let $t_0$ denote the tiling of $\cD \times [2k+1,4k+2]$ which is a copy of $\tilde{t}_0$. Clearly, $t_1 \cup t_0$ and $t_2 \cup t_0$ are both tilings of a cylinder with even depth, so that
%$$
%T^{\ex}(t_1) - T^{\ex}(t_2) = T^{\ex}(t_1) + T^{\ex}(t_0) - (T^{\ex}(t_2) + T^{\ex}(t_0)) = T^{\ex}(t_1 \cup t_0) - T^{\ex}(t_2 \cup t_0), 
%$$ 
%which is an integer.
%
%For the general case, assume without loss of generality that $k_1 \leq k_2$.
%Let $t_1$ and $t_2$ be, respectively, tilings of $\cR_{k_1}$ and $\cR_{k_2}$; let $\tilde{t}_2$ be the tiling of $\cR_{k_2}$ obtained from $t_1$ by tiling $\cD \times [2k_1 + 1, 2k_2 + 1]$ with all dominoes parallel to $\ez$. Then, clearly
%$T^{\ez}(t_1) - T^{\ez}(t_2) = T^{\ez}(\tilde{t}_2) - T^{\ez}(t_2) \in \ZZ,$
%by what we proved before ($\tilde{t}_2$ and $t_2$ are both tilings of $\cR_{k_2}$). By Proposition \ref{prop:equalTwists}, we're done.
\end{proof}

%{\color[rgb]{0,0,1} We should somehow introduce the subject, and explain the relation between tilings and matchings.}
\begin{lemma}
\label{lemma:treeConstruction}
Let $\pi$ be a basic plane with normal $\vw \in \Delta$, and let $\cD \subset \pi$ be a planar region with connected interior such that 
$$\# (\text{black squares in } \cD) = \# (\text{white squares in } \cD) = n.$$ 
Then there exists a tiling $t_0$ of $\cD +[0,2n-1]\vw$ such that $T^{\vw}(t_0) \in \ZZ$.
\end{lemma}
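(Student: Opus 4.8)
The plan is to exhibit one \emph{particular} tiling $t_0$ of $\cR:=\cD+[0,2n-1]\vw$, built from a spanning tree of the adjacency graph of $\cD$, and to compute its pretwist. Note first that $\cR$ is a $\vw$-pseudocylinder (the base $\cD$ has connected interior but need not be simply connected), and that the depth $2n-1$ equals the number of edges of a spanning tree of $\cD$ — this coincidence is what makes the construction possible. Let $G$ be the graph whose vertices are the unit squares of $\cD$ and whose edges join squares sharing a face; since $\interior(\cD)$ is connected, $G$ is connected, and any spanning tree $\mathcal{T}$ of $G$ has exactly $2n-1$ edges.

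\textbf{The construction.} I would build $t_0$ by pruning $\mathcal{T}$ from its leaves. The base case $n=1$ is immediate: $\cD$ is a single domino and $\cR$ is one domino, with $T^{\vw}=0$. For the pruning step, repeatedly delete a leaf $L$ of the current tree, with $P$ its unique current neighbour; think of the column over $L$, a stack of $2n-1$ unit cubes, some of which are already covered by $\vw$-transversal dominoes placed when $L$'s descendants were pruned. If the number of still-uncovered cubes of this column is even, cover them by $\vw$-parallel dominoes. If it is odd, parity forces one $\vw$-transversal domino joining the column over $L$ to the column over $P$; place it at a level chosen so that (a) the rest of $L$'s column closes up into $\vw$-parallel dominoes and (b) the uncovered cubes of the column over $P$ remain a ``stackable'' configuration. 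The bookkeeping invariant to maintain is that every remaining column has its uncovered cubes forming a contiguous stack of the correct parity; the forced recursion ``$P$ sends a transversal domino up to its own parent precisely when it receives an even number of transversal dominoes from its children'' is what one must respect. This yields a valid tiling $t_0$ of $\cR$ (and in particular shows $\cR$ is tileable).

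\textbf{Integrality of the pretwist.} I would then show $T^{\vw}(t_0)=0\in\ZZ$. The $\vw$-parallel dominoes contribute nothing to any $\tau^{\vw}$. For the $\vw$-transversal dominoes, the placement above guarantees that above any such domino $d$ (in the $+\vw$ direction), within the two columns it meets, one sees only $\vw$-parallel dominoes and further $\vw$-transversal dominoes, and the $\vw$-transversal dominoes stacked along a single column contribute to $T^{\vw}$ in cancelling pairs: passing from one level to the next along a column flips the colour of the cube, hence flips the sign of the corresponding $\tv$-vector, so consecutive contributions $\tfrac14\det(\tv(d'),\tv(d),\vw)$ cancel. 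Summing over all columns gives $T^{\vw}(t_0)=0$. (As a cross-check one also has the weaker statement ``for free'': applying Proposition~\ref{prop:writheFormula} to the even-depth pseudocylinder $\cD+[0,2(2n-1)]\vw$ with the doubled tiling, and noting by antisymmetry that the interaction term between the two stacked copies of $t_0$ vanishes, gives $2T^{\vw}(t_0)\in\ZZ$; the explicit structure of $t_0$ is exactly what upgrades this from $\tfrac12\ZZ$ to $\ZZ$.)

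\textbf{Main obstacle.} The delicate point is the pruning construction itself: one must choose, at each step, the level of the forced $\vw$-transversal domino so that all the remaining columns stay in the invariant \emph{and} the cancellation in the pretwist computation survives — the parity constraints become subtle when a single vertex $P$ is the parent of several ``forced'' leaves (a vertex of $\mathcal{T}$ can have degree up to $4$), and this is where the argument requires care.
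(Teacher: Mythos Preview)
Your approach is genuinely different from the paper's, and both of its two main steps carry real gaps.

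\textbf{The construction.} You prune \emph{single} leaves and place one $\vw$-transversal domino along the edge to the parent when parity demands it. The paper instead removes leaves in \emph{white–black pairs}, takes the path $P_k$ in the current tree between them, and lays horizontal dominoes along that path on floors $2k$ and $2k+1$; each iteration consumes exactly two new floors, which is why the depth $2n-1$ arises. Your ``main obstacle'' is not merely a place that requires care---it is an actual obstruction you have not overcome. Concretely: if children $c_1,c_2,c_3$ of $v$ are pruned in that order, with $c_1$ forcing a transversal at level $0$ and $c_2$ at level $2n-2$, then $v$'s uncovered interval becomes $[1,2n-3]$; if $c_3$ is an original leaf (uncovered interval $[0,2n-2]$), the required level must lie in $\{0,2n-2\}\cap\{1,2n-3\}=\emptyset$. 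Sometimes smarter earlier choices rescue this, but you give no scheme (pruning order, level rule) and no proof that one exists. The paper's path-based algorithm sidesteps this entirely.

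\textbf{The integrality.} Your claim $T^{\vw}(t_0)=0$ via ``consecutive contributions cancel because colour flips'' does not hold up. The transversals touching a fixed column $v$ need not sit at consecutive levels (in your invariant they occupy the two ends of a shrinking interval, leaving a contiguous gap in the middle), and two consecutive transversals may point toward \emph{different} neighbours of $v$, so the sign change in $\tv(\cdot)$ from the colour flip does not pair them off in $\det(\tv(d'),\tv(d),\vw)$. In small examples the sum does come out to $0$, but the mechanism is not the one you describe, and you have not given a valid argument. The paper does \emph{not} claim $T^{\vw}(t_0)=0$; it proves $T^{\vw}(t_0)\in\ZZ$ by a mod-$2$ count: for each path $P_k$, the half-integer contributions at a vertex $v\in P_k$ are governed by whether later paths $P_j$ \emph{meet} or \emph{leave} $P_k$ at $v$, and the total count of meetings plus leavings along $P_k$ is even (because on a tree each $P_j$ meets iff it leaves, except when an endpoint of $P_j$ lies on $P_k$, and those cases come in equinumerous pairs). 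That parity argument is the actual content of the lemma.
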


Notice that, with Lemma \ref{lemma:treeConstruction}, the proof of Proposition \ref{prop:equalTwistsMultiplex} is complete. However, we need some preparation before we can prove Lemma \ref{lemma:treeConstruction}.

%%----- Graph theory stuff

It is a well-known fact that domino tilings of a region can be seen as perfect matchings of a related graph: in fact, if we consider the graph whose vertices are centers of the cubes (squares in the planar case) of the region, and where two vertices are joined if their Euclidean distance is $1$, then a domino tiling can be directly translated as a perfect matching in this graph. This graph is called the \emph{associated graph} of a region $\cR$ (planar or spatial), and denoted $G(\cR)$.
Since the proof of Lemma \ref{lemma:treeConstruction} will come more naturally in the setting of matchings in associated graphs, we shall revert to this viewpoint for what follows. 

A \emph{bicoloring} of a graph $G$ is a coloring of each vertex of $G$ as black or white, in such a way that no two adjacent vertices have the same color.
Associated graphs for a region $\cR$ are always bicolored: each vertex inherits the color of the cube (or square) it refers to.
For what follows, we shall assume that all graphs are already bicolored. 
Moreover, any subgraph of a bicolored graph $G$ (for instance, the one obtained after deleting a vertex) shall inherit the bicoloring of $G$.  

\begin{lemma}
\label{lemma:leafColors}
Let $T$ be a bicolored tree. If all leaves are white, then the number of white vertices in $T$ is strictly larger than the number of black vertices in $T$.
\end{lemma}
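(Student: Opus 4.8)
The plan is to avoid induction entirely and argue by a handshaking (degree-counting) identity, using the fact that the bicoloring makes $T$ a bipartite graph. First I would record the combinatorial meaning of the hypothesis: if every leaf of $T$ is white, then every black vertex of $T$ has degree at least $2$. Indeed, a black vertex of degree $0$ would force $T$ to be a single black vertex, which is impossible since that vertex would be a (black) leaf; and a black vertex of degree $1$ is itself a leaf, again contradicting the hypothesis. (In the degenerate case where $T$ has a single vertex, there are no black vertices, so this statement holds vacuously.)

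Next I would exploit bipartiteness. Let $W$ and $B$ denote the sets of white and black vertices, so $|V(T)| = |W| + |B|$. Since $T$ is bicolored, every edge of $T$ joins a vertex of $W$ to a vertex of $B$; counting each edge at its black endpoint gives $\sum_{v \in B} \deg_T(v) = |E(T)|$. Because $T$ is a tree, $|E(T)| = |V(T)| - 1 = |W| + |B| - 1$. Combining this with the degree bound from the previous paragraph yields
$$
|W| + |B| - 1 \;=\; \sum_{v \in B} \deg_T(v) \;\geq\; 2|B|,
$$
hence $|W| \geq |B| + 1 > |B|$, which is exactly the claim.

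The only step requiring any care is the first one, namely the clean translation of "all leaves are white" into "every black vertex has degree $\geq 2$", together with the bookkeeping for the one-vertex tree; everything after that is a two-line computation. If one preferred an inductive argument instead, the natural move would be to delete the (necessarily black, degree $\geq 2$) neighbor $u$ of some white leaf: each component of $T - u$ is a strictly smaller tree still satisfying the hypothesis (a new leaf is either an old leaf of $T$ or a former neighbor of $u$, hence white in either case), and summing the inductive strict inequalities over the at least two resulting components recovers strictness. I would nonetheless present the degree-counting proof, since it dispenses with case analysis.
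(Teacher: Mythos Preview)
Your degree-counting proof is correct and is a genuinely different (and cleaner) route than the paper's. The paper proceeds by induction on $|V(T)|$: it picks a white leaf $w$, lets $v$ be its unique neighbor (necessarily black), deletes \emph{both} $w$ and $v$, and observes that the resulting forest is nonempty with all leaves still white; the inductive hypothesis gives strictly more white than black in the forest, and re-inserting one vertex of each color preserves the strict inequality. Your argument replaces this with a single handshaking computation: bipartiteness gives $\sum_{v\in B}\deg(v)=|E(T)|=|W|+|B|-1$, and the hypothesis forces $\deg(v)\ge 2$ for every $v\in B$, whence $|W|\ge |B|+1$. This buys you a two-line proof with no recursion and no need to verify that the smaller pieces still satisfy the hypothesis.

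Your inductive sketch at the end is close in spirit to the paper's but not identical: you delete only the black neighbor $u$ and use $\deg(u)\ge 2$ to get at least two components, each contributing a strict inequality; the paper instead deletes $u$ together with the white leaf, trading ``at least two components'' for ``remove one vertex of each color''. Both work. One minor point: your handling of the one-vertex tree tacitly treats an isolated vertex as a leaf; this is the convention the paper must also be using (otherwise a single black vertex would be a counterexample), so it is fine, but it is worth stating explicitly.
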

By definition, a tree is connected and, therefore, nonempty.
\begin{proof}
We proceed by induction on the number of vertices. The result is clearly true if $T$ has three or fewer vertices.
Suppose, by induction, that the result holds for balanced trees with $m$ vertices for any $m < n$. Let $T$ be a tree with $n$ vertices such that all leaves are white.

Let $w \in T$ be a (white) leaf, and let $v \in T$ be the only neighbor of $w$. Let $F$ be the forest obtained by deleting $w$ and $v$: $F$ is nonempty, otherwise $v$ would have to be a black leaf, which contradicts the hypothesis. Now for each connected component $T'$ of $F$, $T'$ is a tree with less than $n$ vertices such that all leaves are white: therefore, by induction, $F$ has more white vertices than black vertices. However, the vertices of $T$ are those of $F$ plus one black vertex ($v$) and one white vertex ($w$), so that the number of white vertices in $T$ is greater than that of black ones.
By induction, we get the result.  
\end{proof}

A connected bicolored graph $G$ is \emph{balanced} if the number of white vertices equals the number of black ones. By Lemma \ref{lemma:leafColors}, a balanced tree must have at least one white leaf and one black leaf.

A \emph{perfect matching} of a bipartite graph $G$ is a set of pairwise disjoint edges of $G$, such that every vertex is adjacent to (exactly) one of the edges in the matching. Clearly, a necessary condition for the existence of a perfect matching is that $G$ is balanced.

Let $G = (V,E)$ be a bicolored graph (in this notation, $V$ is the vertex set of $G$, and $E$ is its edge set), and let $I_n = \{0,1, \ldots, n-1\}$. Let $G \times I_n = (V \times I_n,E_n)$, where $E_n$ consists of all edges connecting $(v,j)$ and $(v,j+1)$, for each $v \in V$ and $j \in I_{n-1}$, plus the edges connecting $(v_1,j)$ and $(v_2,j)$ for each $j \in I_n$ whenever the edge $v_1v_2 \in E$. The color of a vertex $(v,j) \in G \times I_n$ equals the color of $v$ if and only if $j$ is even. Naturally, if $\cD \subset \pi$ is a planar region with normal $\vw$, then $G(\cD) \times I_n \approx G(\cD + [0,n]\vw)$.

Let $G$ be a (nonempty) balanced connected bicolored graph with $2n$ vertices. Algorithm \ref{algorithm:perfectMatching} finds a perfect matching $M$ of $G \times I_{2n-1}$.
%Let $T$ be a spanning tree for $G$. Since $T$ is a balanced tree, by Lemma \ref{lemma:leafColors} it contains at least one white leaf and one black leaf.
\begin{algorithm}
\caption{Algorithm for finding a perfect matching $M$ of $G \times I_{2n-1}$.}
\label{algorithm:perfectMatching}
\begin{algorithmic}
\State Pick a spanning tree $T$ for $G$.
\Comment \textit{\footnotesize $T$ is a balanced tree}
\State $M_0 \gets \emptyset$
\State $T_0 \gets T$
\State $k \gets 0$
\While {$T_k \neq \emptyset$}
\State Pick a white leaf $v_w$ and a black leaf $v_b$ of $T_k$ \newline
\Comment \textit{ \footnotesize Lemma \ref{lemma:leafColors} ensures that a balanced tree has at least one white leaf and one black leaf}
\State Pick a path $P_k = v_{k,1}v_{k,2} \ldots v_{k,m_k}$ in $T_k$ from $v_w$ to $v_b$ \newline
\Comment{\textit{ \footnotesize i.e., $v_{k,1} = v_w, v_{k,m_k} = v_b$; notice that $m_k$ is necessarily even}}
\State $D_k \gets \{(v,2k-1)(v,2k) \,|\, v \in T \setminus T_k\} \sqcup \{(v,2k)(v,2k+1)\,|\, v \in T_k \setminus P_k)\}$
\State $E_k \gets \{(v_{k,2i-1}v_{k,2i},2k) \,|\, 1 \leq i \leq \frac{m_k}{2}\} \sqcup \{(v_{k,2i}v_{k,2i+1},2k+1) \,|\, 1 \leq i < \frac{m_k}{2}\}$\newline        
\Comment{\textit{\footnotesize Here $(vw,l)$ means the edge $(v,l)(w,l)$, i.e., the edge between the vertices $(v,l)$ and $(w,l)$}}
\State $M_{k+1} \gets M_k \sqcup D_k \sqcup E_k$
\State $T_{k+1} \gets T_k \setminus \{v_w, v_b\}$ \newline
\Comment{\textit{\footnotesize Notice that $T_{k+1}$ is still a balanced tree (except in the last iteration, when it is empty)}}
\State $k \gets k + 1$
\EndWhile 
\State $M \gets M_k$
\end{algorithmic}
\end{algorithm}

\begin{lemma}
If $G$ is a connected bicolored balanced graph with $2n$ vertices, then the set of edges $M$ generated by running Algorithm \ref{algorithm:perfectMatching} on $G$ is a perfect matching of $G \times I_{2n-1}$.
\end{lemma}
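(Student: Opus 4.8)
The plan is to verify two things: that the algorithm is well-defined and runs through the loop exactly $n$ times, and that the resulting edge set $M$ meets every vertex of $G \times I_{2n-1}$ exactly once. I would begin by establishing the loop invariant that, on entering the body of the while loop with counter value $k$, the graph $T_k$ is a balanced bicolored subtree of $T$ with $2(n-k)$ vertices. The base case holds because $G$, hence its spanning tree $T$, is balanced with $2n$ vertices. For the inductive step, the observation following Lemma~\ref{lemma:leafColors} (a balanced tree has at least one white leaf and at least one black leaf) supplies distinct leaves $v_w$, $v_b$; deleting a white leaf and then a black leaf from a tree with at least four vertices again gives a tree (deleting a leaf from a tree gives a tree, and a leaf of a tree with at least three vertices stays a leaf when a \emph{different} leaf is removed, since otherwise the two leaves would be the whole tree), and it is balanced with two fewer vertices. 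Hence the loop executes exactly for $k = 0, 1, \ldots, n-1$, ending with $T_n = \emptyset$ and $M = M_n$. I would also record here that each $P_k$ has an even number $m_k$ of vertices (it is an alternating-color path from a white vertex to a black vertex), that $T \setminus T_0 = \emptyset$, and that $m_{n-1} = 2$ and $T_{n-1} \setminus P_{n-1} = \emptyset$.

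Next I would check that every edge put into $M$ is genuinely an edge of $G \times I_{2n-1}$, i.e. that every level index appearing lies in $\{0, 1, \ldots, 2n-2\}$. The edges of $E_k$ are horizontal, at levels $2k$ and $2k+1$; both are legal for $0 \le k \le n-1$ except the level-$(2k+1)$ edges at $k = n-1$, of which there are none since $m_{n-1}/2 = 1$. Among the edges of $D_k$, those of the form $(v,2k-1)(v,2k)$ occur only for $v \in T \setminus T_k$, which is empty at $k = 0$ and otherwise harmless, while those of the form $(v,2k)(v,2k+1)$ occur only for $v \in T_k \setminus P_k$, which is empty at $k = n-1$.

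The heart of the argument is the vertex count. Fix $v \in V$ and let $k_v \in \{0,\ldots,n-1\}$ be the unique pass at which $v$ is removed, i.e. $v \in \{v_w, v_b\}$ during pass $k_v$; this exists and is unique since the tree is eventually emptied and $v$ can be a leaf chosen only while $v \in T_k$. I would then go through the column $\{v\} \times \{0,\ldots,2n-2\}$ pass by pass. For $k < k_v$ we have $v \in T_k$, and if $v \in P_k$ then $v$ is an \emph{interior} vertex of $P_k$ (the endpoints $v_w, v_b$ being removed only at pass $k_v > k$), so $v$ is matched at levels $2k$ and $2k+1$ by $E_k$; if $v \notin P_k$, it is matched at levels $2k$ and $2k+1$ by the vertical edge $(v,2k)(v,2k+1)$ of $D_k$. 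At pass $k_v$, $v$ is an endpoint of $P_{k_v}$, so it is matched at level $2k_v$ by $E_{k_v}$ and at no other level by this pass. For $k > k_v$ we have $v \notin T_k$, so $v$ is matched at levels $2k-1$ and $2k$ by the vertical edge $(v,2k-1)(v,2k)$ of $D_k$. The level sets contributed by these three phases are $\{0,\ldots,2k_v-1\}$, $\{2k_v\}$ and $\{2k_v+1,\ldots,2n-2\}$, which are pairwise disjoint with union $\{0,\ldots,2n-2\}$, and within each phase the per-pass pairs of levels are themselves disjoint. Hence every $(v,j)$ is matched exactly once, so $M$ is a perfect matching of $G \times I_{2n-1}$.

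I expect the main obstacle to be not depth but care: making sure the intervals of levels produced by the ``before'', ``during'' and ``after'' phases of each column fit together with no gap and no overlap, and that the boundary passes $k = 0$ and $k = n-1$ never create an edge at level $-1$ or level $2n-1$. As a consistency check one has $|D_k| + |E_k| = (2n - m_k) + (m_k - 1) = 2n - 1$, so $|M| = n(2n-1)$, which is exactly half the number of vertices of $G \times I_{2n-1}$.
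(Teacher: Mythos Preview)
Your proposal is correct and follows the same approach as the paper's proof: first check that the algorithm is well-defined and that all edges produced lie in $G \times I_{2n-1}$ (handling the boundary passes $k=0$ and $k=n-1$), then verify that $M$ is a perfect matching. The paper dispatches the second part in one sentence (``the reader will easily convince himself\ldots''), whereas you actually carry out the column-by-column verification and add the cardinality check $|M| = n(2n-1)$; your write-up is strictly more complete than the paper's own argument.
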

\begin{proof}
%It suffices to prove that $M \subset E(G \times I_{2n-1})$ and that every vertex $(v,j)$ of $G \times I_{2n-1}$ is adjacent to exactly one edge of $M$.
To see that $M \subset E(G \times I_{2n-1})$, notice that any spanning tree has $2n$ vertices, and exactly two vertices are deleted in each iteration, so that the last iteration where $T_k \neq \emptyset$ occurs when $k = n-1$. In all other iterations (i.e., $0 \leq k < n-1$), clearly any edge created is contained in $E(G \times I_{2n-1})$. When $k = n-1$, $T_k$ is a balanced tree with two vertices, so $P_k = v_w v_b$ and only the edges $\{(v,2n-3)(v,2n-2) \,|\, v \in T \setminus T_k\}$ plus the edge $(v_w,2n-2)(v_b,2n-2)$ are created. Now these edges are contained in $E(G \times I_{2n-1})$, so we're done.  

%This proves that $M$ is a matching of $G \times I_{2n-1}$. 
This proves that $M$ is a subset of the edgeset of $G \times I_{2n-1}$. The reader will easily convince himself that it is a perfect matching (i.e., that every vertex $(v,j)$ of $G \times I_{2n-1}$ is adjacent to exactly one edge of $M$).
%Let $(v,j) \in G \times I_{2n-1}$, and let $k_0 = \ceil{j/2}$: clearly an edge involving $(v,j)$ can only appear when $k = k_0$. Now either $v \in T \setminus T_k$, $v \in P_k$ or $v \in T_k \setminus P_k$: in either case, exactly one edge  
\end{proof}

Next we shall prove Lemma \ref{lemma:treeConstruction}. In order to make the explanation clearer, we shall first introduce a few concepts. Let $G$ be a bicolored connected balanced graph, and consider the perfect matching $M$ of $G \times I_{2n-1}$ obtained by running Algorithm \ref{algorithm:perfectMatching} on $G$, as well as the intermediate objects that were created, such as $E_k$ and $P_k$.

Given an edge $e = (vw,j)$ of $E_k$, we say $e$ is adjacent to $v$ and to $w$ (even though it is not an edge of $G$). For $v \in G$ and $0 \leq k \leq 2n-1$, we write $E(v,k) = \{e \in E_k \,|\, e \text{ adjacent to } v\}$. 

Consider the paths $P_k = v_{k,1}v_{k,2} \ldots v_{k,m_k}$ chosen in each step of the algorithm (we shall also use this notation in the proof).
If $j > k$, we say that a path $P_j$ \emph{meets} $P_k$ \emph{at} $v \in G$ if $v = v_{j,i} \in P_k$ for some $i > 1$, but $v_{j,i-1} \notin P_k$. Analogously, $P_j$ \emph{leaves} $P_k$ \emph{at} $v$ if $v = v_{j,i} \in P_k$ for some $i < m_j$, but $v_{j,i+1} \notin P_k$. Notice that a path $P_j$ can meet and leave $P_k$ at the same vertex $v$. Also, notice that $P_j$ can only meet (resp. leave) $P_k$ at most once (i.e., at no more than one vertex).

%Let $G$ be a connected balanced graph with $2n$ vertices such that $G = G(\cD)$ for some planar region $\cD$, and let $M$ be the perfect matching of $G \times I_{2n-1}$ obtained after running Algorithm \ref{algorithm:perfectMatching} on $G$. We can clearly associate each edge $e \in M$ with a unique domino $d$ of $\cR = \cD \times [0,2n-1]$, so that $M$ is associated with a tiling $t$ of $\cR$.

%If $e_0,e_1 \in M$, we shall abuse notation and write $\tau^{\ez}(e_0,e_1) = \tau^{\ez}(d_0,d_1)$, where $d_0$ and $d_1$ are the associated dominoes of $e_0$ and $e_1$, respectively. Therefore, sets of edges can also be seen as sets of dominoes (or sets of dimers), so we can write, for instance, $T^{\ez}(E_k, E_j) = \sum_{e \in E_k, e' \in E_j} \tau^{\ez}(e,e')$.     

\begin{proof}[Proof of Lemma \ref{lemma:treeConstruction}]
%Since the result is clear when $n = 1$, assume, by induction, that the result is true for $n-1$ for some $n \geq 2$.
Consider the graph $G = G(\cD)$ associated with the planar region $\cD$. Clearly $G$ is balanced; since $\cD$ has connected interior, it follows that $G$ is also connected. Let $M$ be the perfect matching obtained after running Algorithm \ref{algorithm:perfectMatching} on $G$, and let $t$ be the tiling of $\cD + [0,2n-1]\vw$ associated with $M$.

If $e_0, e_1 \in M$, we will abuse notation and write $\tau^{\vw}(e_0,e_1) = \tau^{\vw}(d_0,d_1)$, where $d_i \in t$ is the domino associated with $e_i \in M$: we also say that two edges are parallel if their associated dominoes are parallel. %Analogously, sets $A_0, A_1 \subset M$ are also seen as sets of dominoes, and we may write $T^{\ez}(A_0,A_1)$ to mean $\sum_{e_0 \in A_0, e_1 \in A_1} \tau^{\ez}(e_0,e_1)$. 

Notice that the only dominoes that are not parallel to $\vw$ are those associated with the edges of $E_k$ for each $k$: therefore, 
$$
T^{\vw}(t) = \sum_{i \leq j} T^{\vw}(E_i, E_j) = \sum_{\substack{i \leq j \\e \in E_i, e' \in E_j}}\tau^{\vw}(e,e').
$$ 
Fix $0 \leq k \leq n-1$. We want to show that $\sum_{j \geq k} T^{\vw}(E_k, E_j) \in \ZZ$. First, write 
$$\sum_{j \geq k} T^{\vw}(E_{k}, E_j) = \sum_{\substack{1 < i < m_{k}\\j \geq k}}T^{\vw}(E(v_{k,i},k), E(v_{k,i},j));$$
we may ignore $v_{k,1}$ and $v_{k,m_k}$ because they are deleted from the tree in step $k$, so that $E(v_{k,1},j) = E(v_{k,m_k},j) = \emptyset$ for each $j > k$ (for $j = k$, it contains only one edge, so there is also no effect).

\begin{figure}[ht]%
\centering
\subfloat[Some cases where $P_k$ goes straight at $v$: the effects are, respectively, $1$, $1/2$, $1/2$ and $0$.]{\includegraphics[width=0.45\columnwidth]{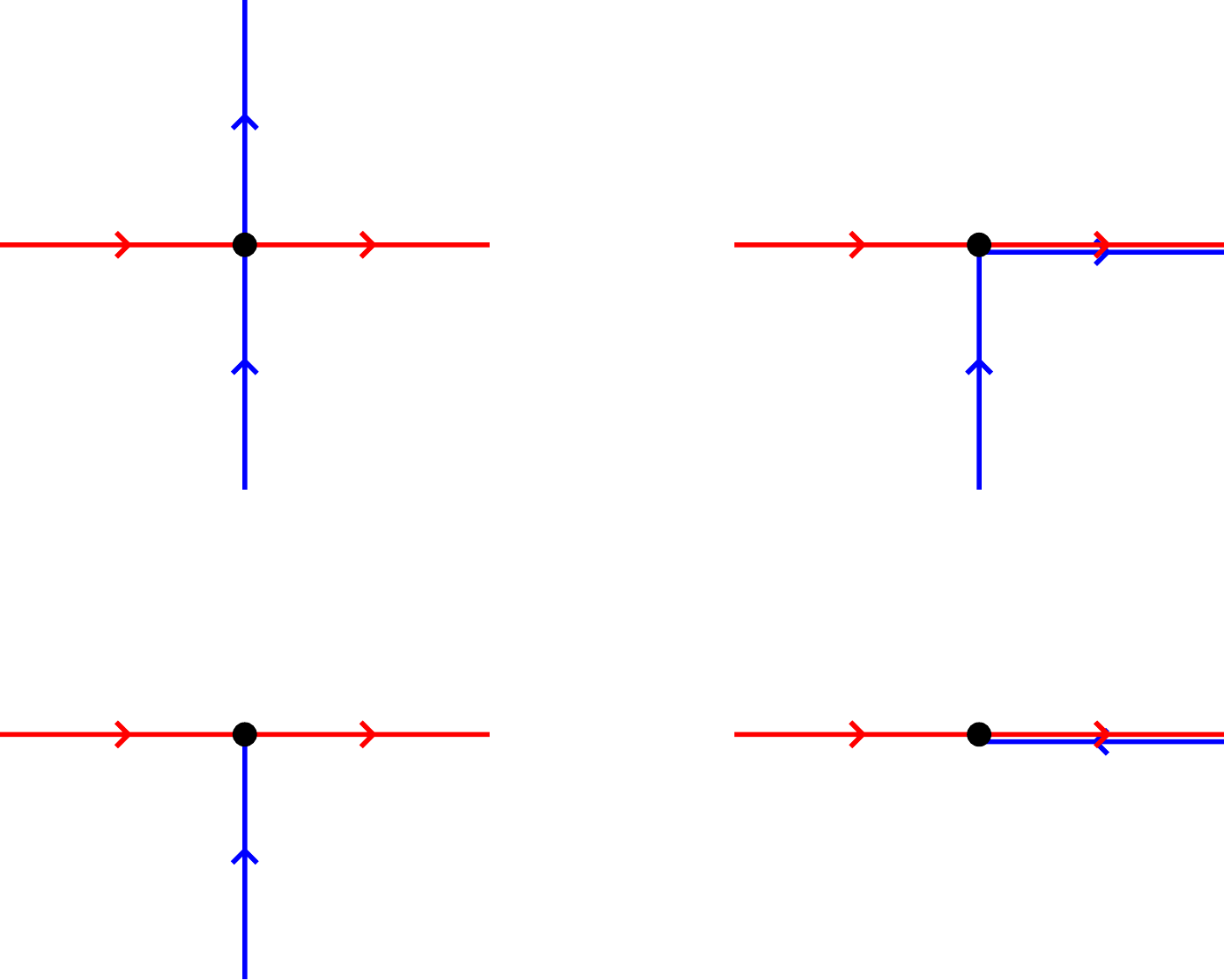}\label{subfig:treeConstructionStraight}} 
\hspace{0.075\columnwidth}
\subfloat[Some cases where $P_k$ makes a left turn at $v$: in this case the red segments have nonzero effect on one another (in this case it is $1/4$): the effects on the blue segments are, respectively, $1/2$, $0$, $1/4$ and $-1/4$.]{\includegraphics[width=0.45\columnwidth]{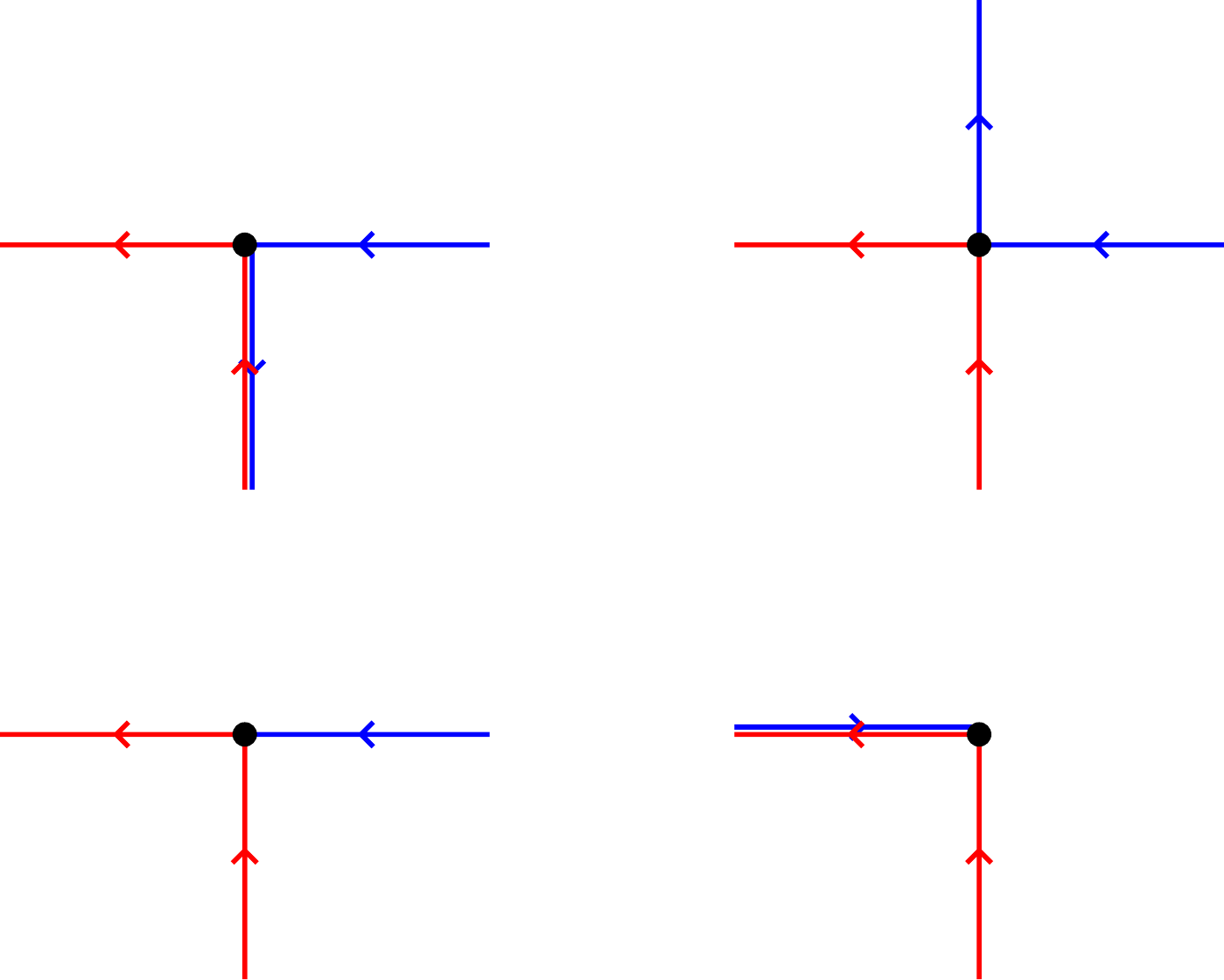}\label{subfig:treeConstructionTurn}}
\caption{Edges of $E_k$ (red) and $E_j$ (blue) for some $j > k$, portrayed as edges of $G$. The edges are oriented as $\tv(d)$, where $d$ is the associated domino. The portrayed vertex is $v$, which we assume here to be black: notice that $v$ is one of the endpoints of $P_j$ in the bottom two cases of each figure.}%
\label{fig:treeConstructionEffects}%
\end{figure}

For $v = v_{k,i}$, $1 < i < m_k$, we claim that, modulo $1$, 
$\sum_{j \geq k} T^{\vw}(E(v,k), E(v,j))$ equals  
$$\half \left(\#\{j > k \,|\, P_j \text{ meets } P_k \text{ at } v\} + \#\{j > k \,|\, P_j \text{ leaves } P_k \text{ at } v\}\right)$$ 
(in other words, their difference is an integer).

If the two edges in $E(v,k)$ are parallel (i.e., $P_k$ goes straight at $v$), then $T^{\vw}(E(v,k), E(v,k)) = 0$. By checking a number of cases (see Figure \ref{subfig:treeConstructionStraight}) we see that the following holds for each $j > k$: 
\begin{equation}
T^{\vw}(E(v,k), E(v,j)) = 
\begin{cases}
\pm 1, &P_j \text{ meets and leaves } P_k \text{ at } v; \\
\pm 1/2, &P_j \text{ either meets or leaves } P_k \text{ at } v; \\ 
0, &\text{otherwise.}
\end{cases}
\label{eq:effectsMeetingAndLeaving}
\end{equation}

If the two edges in $E(v,k)$ are not parallel (i.e., $P_k$ makes a turn at $v$), we proceed as follows: assume that the path $P_k$ makes a left turn and that $v$ is a black vertex (the other cases are analogous). Let $k'$ be the step where $v$ is chosen as the black leaf to be deleted (so that $v = v_{k', m_{k'}}$): again, inspection of a few possible cases (some of which are shown in Figure \ref{subfig:treeConstructionTurn}) shows that \eqref{eq:effectsMeetingAndLeaving} holds for $k < j < k'$ (and for $j > k'$, obviously $T^{\vw}(E(v,k), E(v,j)) = 0$).
Also, $T^{\vw}(E(v,k), E(v,k)) = 1/4$ (because it is a left turn and $v$ is black), and (see the last two examples in Figure \ref{subfig:treeConstructionTurn})
$$
T^{\vw}((E(v,k), E(v,k')) = 
\begin{cases}
1/4, &P_{k'} \text{ meets } P_k \text{ at } v;\\
-1/4, &\text{otherwise;}
\end{cases}
$$
so that $T^{\vw}(E(v,k), E(v,k)) + T^{\vw}((E(v,k), E(v,k')) = 1/2$ if and only if $P_{k'}$ meets $P_j$ at $v$ (and $0$ otherwise), so that we get the result.

Now let $N(v) = \#\{j > k \,|\, P_j \text{ meets } P_k \text{ at } v\} + \#\{j > k \,|\, P_j \text{ leaves } P_k \text{ at } v\}.$ To finish the proof, we need to show that 
$$
N = \sum_{1 < i < m_k}N(v_{k,i}) = \#\{j > k \,|\, P_j \text{ meets } P_k\} + \#\{j > k \,|\, P_j \text{ leaves } P_k\}
$$ is even.
Because all $P_j$'s are paths in a tree $T_k$, it follows that each path meets (or leaves) $P_k$ at most once. Therefore, each $j > k$ may contribute $0$ (if it never meets nor leaves $P_k$), $1$ (if it either meets or leaves $P_k$, but not both) or $2$ (if it meets and leaves $P_k$) to the above sum. This contribution is $0$ if $v_{j,1}, v_{j,m_j} \in P_k$; it is $0$ or $2$ if $v_{j,1}, v_{j,m_j} \notin P_k$. If exactly one of the two is in $P_k$, the contribution is $1$; however, since
$\#\{j > k \,|\, v_{j,1} \in P_k, v_{j,m_j} \notin P_k\} = \#\{j > k \,|\, v_{j,1} \notin P_k, v_{j,m_j} \in P_k\},$  
it follows that $N$ is even, so that
$T^{\vw}(t) = \sum_{j \geq k} T^{\vw}(E_{k}, E_j) \equiv N/2 \pmod{1}$
is an integer. 
\end{proof}
%---------------- End of Graph Theory stuff
We sum up our main results in the following proposition:
\begin{prop}
\label{prop:equalTwistsIntNOdd}
Let $\cD \subset \pi$ be a planar region with normal vector $\vw$ and connected interior such that
$$\# (\text{black squares in } \cD) = \# (\text{white squares in } \cD) = n.$$ 
Then $\cD + [0,2n-1]\vw$ is tileable.
Moreover, for each $k \in \NN$ such that $\cD + [0,2k-1]\vw$ is tileable (in particular, for each $k \geq n$), every tiling $t$ of $\cD + [0,2k-1]\vw$ satisfies $T^{\ex}(t) = T^{\ey}(t) = T^{\ez}(t) \in \ZZ$.
\end{prop}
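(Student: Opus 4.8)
The plan is to deduce the proposition by combining three results already established in this section. The first assertion, that $\cD + [0,2n-1]\vw$ is tileable, is precisely the content of Lemma~\ref{lemma:treeConstruction}, which in addition produces a distinguished tiling $t_0$ of this region with $T^{\vw}(t_0) \in \ZZ$; I would fix this $t_0$ once and for all and use it as a reference tiling throughout. For the parenthetical remark that $\cD + [0,2k-1]\vw$ is tileable for every $k \geq n$, I would simply stack layers: the depth-$2$ cylinder $\cD + [0,2]\vw$ always admits the tiling all of whose dimers are parallel to $\vw$, so gluing $t_0$ to $k-n$ such layers produces a tiling of $\cD + [0,\,2n-1+2(k-n)]\vw = \cD + [0,2k-1]\vw$.

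Next, fix any $k \in \NN$ for which $\cR' := \cD + [0,2k-1]\vw$ is tileable, and let $t$ be an arbitrary tiling of $\cR'$. Since $\cR'$ is a cylinder of odd depth $2k-1$, Lemma~\ref{lemma:equalTwistsNOdd} gives $T^{\ex}(t) = T^{\ey}(t) = T^{\ez}(t) \in \half\ZZ$; because $\vw \in \Delta$, this common value is exactly $T^{\vw}(t)$. Hence the whole proposition reduces to the single statement $T^{\vw}(t) \in \ZZ$.

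To obtain integrality I would invoke Lemma~\ref{lemma:integerTwistDifference} with the two odd-depth regions $\cR' = \cD + [0,2k-1]\vw$ and $\cD + [0,2n-1]\vw$ (these are $\cR_{k-1}$ and $\cR_{n-1}$ in the notation of that lemma, since there $\cR_j = \cD + [0,2j+1]\vw$), the direction $\vu = \vw \in \Phi$, and the tilings $t$ and $t_0$: this yields $T^{\vw}(t) - T^{\vw}(t_0) \in \ZZ$. As $T^{\vw}(t_0) \in \ZZ$ by Lemma~\ref{lemma:treeConstruction}, we conclude $T^{\vw}(t) \in \ZZ$, and therefore $T^{\ex}(t) = T^{\ey}(t) = T^{\ez}(t) \in \ZZ$, which is the assertion.

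There is no serious obstacle remaining once Lemmas~\ref{lemma:treeConstruction}, \ref{lemma:equalTwistsNOdd} and \ref{lemma:integerTwistDifference} are in hand; the points requiring care are purely bookkeeping: matching the index conventions (depth $2k-1$ versus the parameter in $\cR_k = \cD + [0,2k+1]\vw$), excluding the degenerate cases $n=0$ or $k=0$, and recording the elementary stacking argument above that keeps the odd-depth cylinders tileable for all $k \geq n$.
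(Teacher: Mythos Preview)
Your proposal is correct and follows exactly the route the paper takes: the paper's own proof is the single line ``Follows directly from Lemmas~\ref{lemma:equalTwistsNOdd}, \ref{lemma:integerTwistDifference} and \ref{lemma:treeConstruction},'' and you have unpacked precisely how those three lemmas combine, including the elementary stacking argument behind the parenthetical ``in particular, for each $k \geq n$.''
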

\begin{proof}
Follows directly from Lemmas \ref{lemma:equalTwistsNOdd}, \ref{lemma:integerTwistDifference} and \ref{lemma:treeConstruction}.
\end{proof}

%----------------- 
Now that we have seen that the twist, as in Definition \ref{def:twist}, is well-defined for cylinders, we may adopt the notation $\Tw(t)$ when $t$ is a tiling of a cylinder. 
\section{Additive properties and proof of Theorem \ref{theo:main}}
\label{sec:additiveProperties}

The goal for this section is to discuss some additive properties of the twist and to complete the proof of Theorem \ref{theo:main}. %These properties allow us to extend the twist, up to a constant, to a much broader class of regions than we did in Section \ref{sec:combTwistBoxes}.

%We say that a region $\cR$ is \emph{tileable} if there exists a tiling $t$ of $\cR$. The empty region $\cR = \emptyset$ is tileable, since it admits the empty tiling. 

\begin{lemma}
\label{lemma:secondOrderTwistDiff}
Let $\cR_0$ and $\cR_1$ be two regions whose interiors are disjoint. Let $t_{\cR_0,0}$ and $t_{\cR_0,1}$ be two tilings of $\cR_0$ and $t_{\cR_1,0}$ and $t_{\cR_1,1}$ be two tilings of $\cR_1$. For each $(i,j) \in \{0,1\}^2$, set $t_{ij} = t_{\cR_0,i} \sqcup t_{\cR_1,j}$, which is a tiling of $\cR = \cR_0 \cup \cR_1$. Let $\Gamma_i^* = \Gamma^*(t_{\cR_i,0}, t_{\cR_i,1})$, $i=0,1$. Then, for each $\vu \in \Phi$,
$$
T^{\vu}(t_{00}) - T^{\vu}(t_{01}) - T^{\vu}(t_{10}) + T^{\vu}(t_{11}) = 2\sum_{\gamma_0 \in \Gamma_0^*,\gamma_1 \in \Gamma_1^*} \Link(\gamma_0,\gamma_1). 
$$
In particular, if $\cR_0$ or $\cR_1$ is simply connected, then $T^{\vu}(t_{00}) - T^{\vu}(t_{01}) - T^{\vu}(t_{10}) + T^{\vu}(t_{11}) = 0$.
\end{lemma}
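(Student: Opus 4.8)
The plan is to exploit the bilinearity of $T^{\vu}(\cdot,\cdot)$ as a double sum over dimers, together with the curve decompositions $\Gamma_0:=\Gamma(t_{\cR_0,0},t_{\cR_0,1})$ and $\Gamma_1:=\Gamma(t_{\cR_1,0},t_{\cR_1,1})$; I work throughout in the dimer picture, in which $T^{\vu}$ is the same quantity. \textbf{Step 1 (splitting and cancellation).} Since $\interior(\cR_0)\cap\interior(\cR_1)=\emptyset$ the two regions share no basic cube, so as a set of dimers $t_{ij}=t_{\cR_0,i}\sqcup t_{\cR_1,j}$; expanding the defining sum,
$$T^{\vu}(t_{ij}) = T^{\vu}(t_{\cR_0,i}) + T^{\vu}(t_{\cR_1,j}) + T^{\vu}(t_{\cR_0,i},t_{\cR_1,j}) + T^{\vu}(t_{\cR_1,j},t_{\cR_0,i}).$$
In the alternating sum $\sum_{i,j}(-1)^{i+j}T^{\vu}(t_{ij})$ the terms $T^{\vu}(t_{\cR_0,i})$ and $T^{\vu}(t_{\cR_1,j})$ cancel, each depending on a single index, leaving
$$T^{\vu}(t_{00})-T^{\vu}(t_{01})-T^{\vu}(t_{10})+T^{\vu}(t_{11}) = \sum_{i,j}(-1)^{i+j}\Big(T^{\vu}(t_{\cR_0,i},t_{\cR_1,j})+T^{\vu}(t_{\cR_1,j},t_{\cR_0,i})\Big).$$

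\textbf{Step 2 (telescoping via $\Gamma$).} The key is the sign identity $\tau^{\vu}(-\ell,\ell')=-\tau^{\vu}(\ell,\ell')$ and $\tau^{\vu}(\ell',-\ell)=-\tau^{\vu}(\ell',\ell)$, immediate from the definition using $\tv(-\ell)=-\tv(\ell)$ and $(-\ell)(0)\cdot\vu=\ell(0)\cdot\vu$ when $\ell$ is not parallel to $\vu$ (both sides being $0$ otherwise). Combined with the fact that $t_{\cR_0,0}\cup(-t_{\cR_0,1})$ is exactly the family of oriented segments of the curves of $\Gamma_0$ (common dimers giving trivial curves $\{\ell,-\ell\}$), this yields, for any set of segments $C$,
$$T^{\vu}(t_{\cR_0,0},C)-T^{\vu}(t_{\cR_0,1},C)=\sum_{\gamma_0\in\Gamma_0}T^{\vu}(\gamma_0,C),$$
and symmetrically with $C$ in the first slot, and likewise for $\cR_1$ and $\Gamma_1$. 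Applying this first to resolve the difference over $i$ (with $C=t_{\cR_1,j}$) and then over $j$ (with $C=\gamma_0$) rewrites the right-hand side of Step 1 as
$$\sum_{\gamma_0\in\Gamma_0}\ \sum_{\gamma_1\in\Gamma_1}\Big(T^{\vu}(\gamma_0,\gamma_1)+T^{\vu}(\gamma_1,\gamma_0)\Big).$$

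\textbf{Step 3 (linking numbers and the simply connected case).} Each curve of $\Gamma_0$ lies in $\interior(\cR_0)$ and each curve of $\Gamma_1$ lies in $\interior(\cR_1)$, so these curves are pairwise disjoint; and a trivial curve $\{\ell,-\ell\}$ contributes $0$ to the above double sum by the sign identity of Step 2, so only pairs $(\gamma_0,\gamma_1)\in\Gamma_0^*\times\Gamma_1^*$ survive. For such a pair $\gamma_0,\gamma_1$ are disjoint simple closed curves of the region $\cR_0\cup\cR_1$, so choosing $\beta\in\bB$ with $\vbeta_3=\vu$, Lemma~\ref{lemma:linkingNumber}\ref{item:linking_00} gives $T^{\vu}(\gamma_0,\gamma_1)+T^{\vu}(\gamma_1,\gamma_0)=2\Link(\gamma_0,\gamma_1)$, and summing produces the stated formula. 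For the final assertion, if say $\cR_0$ is simply connected, then $\gamma_1\cap\cR_0=\emptyset$ (as $\gamma_1\subset\interior(\cR_1)$, which is disjoint from $\cR_0$), so each $\gamma_0\in\Gamma_0^*$ is a loop in $\cR_0\subset\RR^3\setminus\gamma_1$ and hence null-homotopic, hence null-homologous, there; thus $\Link(\gamma_0,\gamma_1)=0$ and the alternating sum vanishes.

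I expect the only mildly delicate points to be the bookkeeping in Step 2 — keeping straight which segments appear reversed and verifying that the trivial curves drop out from both slots — and the small amount of point-set topology in Step 3 needed to see that the curves of $\Gamma_0$ and $\Gamma_1$ lie in the respective open regions (hence are disjoint, and each $\gamma_1$ avoids all of $\cR_0$). Neither is a genuine obstacle: the first is a direct application of the sign identity, and the second follows because the open segment joining the centers of two face-adjacent cubes of a region lies in that region's interior.
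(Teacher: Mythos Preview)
Your proof is correct and follows essentially the same route as the paper's: expand $T^{\vu}(t_{ij})$ bilinearly, observe that only the cross terms survive the alternating sum, rewrite that alternating sum as $T^{\vu}$ of $t_{\cR_0,0}\sqcup(-t_{\cR_0,1})$ against $t_{\cR_1,0}\sqcup(-t_{\cR_1,1})$ (which is exactly your telescoping via the sign identity), pass to the curve decompositions, and invoke Lemma~\ref{lemma:linkingNumber}. The paper does this more tersely by introducing $\ST^{\vu}(A_0,A_1)=T^{\vu}(A_0,A_1)+T^{\vu}(A_1,A_0)$ and jumping straight to $\Gamma_i^*$, whereas you spell out the telescoping and the vanishing of trivial-curve contributions; you also supply the null-homotopy argument for the simply connected clause, which the paper leaves implicit.
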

\begin{proof}
For shortness, given two sets of segments $A_0$ and $A_1$, we shall in this proof write $\ST^{\vu}(A_0,A_1) = T^{\vu}(A_0,A_1) + T^{\vu}(A_1,A_0)$.

For each $(i,j) \in \{0,1\}^2,$ we have 
$$
T^{\vu}(t_{ij}) = T^{\vu}(t_{\cR_0,i} \sqcup t_{\cR_1,j})
= T^{\vu}(t_{\cR_0,i}) + T^{\vu}(t_{\cR_1,j}) + \ST^{\vu}(t_{\cR_0,i},t_{\cR_1,j}).
$$
Notice that the last term is the only one that depends on both $i$ and $j$, so that it is the only one that does not cancel out in the sum $\sum_{i,j \in \{0,1\}} (-1)^{i+j} \Tw(t_{ij})$.
Therefore, we have
\begin{align*}
&\sum_{i,j \in \{0,1\}} (-1)^{i+j} T^{\vu}(t_{ij}) = \sum_{i,j \in \{0,1\}} (-1)^{i+j} \ST^{\vu}(t_{\cR_0,i},t_{\cR_1,j})=\\
&= \ST^{\vu}(t_{\cR_0,0} \sqcup (-t_{\cR_0,1}), t_{\cR_1,0} \sqcup (-t_{\cR_1,1})) 
= \sum_{\gamma_0 \in \Gamma_0^*, \gamma_1 \in \Gamma_1^*} \ST^{\vu}(\gamma_0,\gamma_1).
\end{align*}
%(the last equality is essentially the fact that for two segments $\ell$ and $\tilde{\ell}$, $\tau^{\vu}(\ell,-\tilde{\ell}) = \tau^{\vu}(-\ell,\tilde{\ell}) = -\tau^{\vu}(\ell,\tilde{\ell})).$ 
%notice that $t_{\cR_i,0}$ and $(-t_{\cR_i,1})$ are disjoint, since one is a set of dimers, and the other is a set of minus dimers.
%
%Recall from Section \ref{sec:topologicalGroundwork} the definitions of $\Gamma(t_0,t_1)$ and $\Gamma^*(t_0,t_1)$ for a pair of tilings $t_0$ and $t_1$. 
%If $\gamma \in \Gamma(t_0,t_1)$ is trivial, then $\sum_{\ell \in \gamma}\tau^{\vu}(\ell,\tilde{\ell}) = 0$ for any segment $\tilde{\ell}$: in particular, $T^{\vu}(\gamma,t) = 0$ for any set of segments $t$. 
%Thus, we have
%$$
%\ST^{\vu}(t_{\cR_0,0} \sqcup (-t_{\cR_0,1}), t_{\cR_1,0} \sqcup (-t_{\cR_1,1})) = \sum_{\gamma_0 \in \Gamma_0^*, \gamma_1 \in \Gamma_1^*} \ST^{\vu}(\gamma_0,\gamma_1). 
%$$
Since for each pair $\gamma_0, \gamma_1$ in the sum we have $\gamma_i \subset \interior(\cR_i)$, it follows that $\gamma_0 \cap \gamma_1 = \emptyset$.
Hence, by Lemma \ref{lemma:linkingNumber}, $\ST^{\vu}(\gamma_0,\gamma_1) = 2 \Link(\gamma_0, \gamma_1)$, which yields the result. 
\end{proof}

\begin{coro}
\label{coro:embeddableRegions}
Let $\cR$ be a simply connected region, and suppose that there exists a box $\cB \supset \cR$ such that $\cB \setminus \cR$ is tileable. If $t_0, t_1$ are two tilings of $\cR$ and $t_a, t_b$ are two tilings of $\cB \setminus \cR$, then
$$ 
\Tw(t_0 \sqcup t_a) - \Tw(t_1 \sqcup t_a) = \Tw(t_0 \sqcup t_b) - \Tw(t_1 \sqcup t_b).
$$ 
\end{coro}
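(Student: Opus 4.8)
The plan is to apply Lemma~\ref{lemma:secondOrderTwistDiff} directly, taking $\cR_0 = \cR$ and $\cR_1 = \cB \setminus \cR$. These two regions have disjoint interiors by construction, and their union is the box $\cB$, which is in particular a cylinder; hence the twist $\Tw$ is defined on every tiling of $\cB$ and coincides with each pretwist $T^{\vu}$ (Definition~\ref{def:twist} together with Proposition~\ref{prop:equalTwistsMultiplex}). Note also that $\cR_1 = \cB\setminus\cR$ is tileable by hypothesis, and that for any tiling $t$ of $\cR$ and any tiling $s$ of $\cB\setminus\cR$ the set $t \sqcup s$ is a genuine tiling of $\cB$, since $\cR$ and $\cB\setminus\cR$ cover $\cB$ with disjoint interiors.

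Concretely, I would set $t_{\cR_0,0} = t_0$, $t_{\cR_0,1} = t_1$, $t_{\cR_1,0} = t_a$, $t_{\cR_1,1} = t_b$, so that $t_{ij} := t_{\cR_0,i} \sqcup t_{\cR_1,j}$ satisfies $t_{00} = t_0 \sqcup t_a$, $t_{01} = t_0 \sqcup t_b$, $t_{10} = t_1 \sqcup t_a$, and $t_{11} = t_1 \sqcup t_b$. Since $\cR_0 = \cR$ is simply connected, the final assertion of Lemma~\ref{lemma:secondOrderTwistDiff} gives, for any fixed $\vu \in \Phi$,
$$
T^{\vu}(t_{00}) - T^{\vu}(t_{01}) - T^{\vu}(t_{10}) + T^{\vu}(t_{11}) = 0.
$$
Replacing each pretwist by $\Tw$ (legitimate because $\cB$ is a cylinder) and rearranging yields
$$
\Tw(t_0 \sqcup t_a) - \Tw(t_1 \sqcup t_a) = \Tw(t_0 \sqcup t_b) - \Tw(t_1 \sqcup t_b),
$$
which is precisely the claim.

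I do not expect any real obstacle here: the entire content sits in Lemma~\ref{lemma:secondOrderTwistDiff}, and through it in the vanishing of the cross-linking terms $\sum_{\gamma_0 \in \Gamma_0^*,\,\gamma_1 \in \Gamma_1^*}\Link(\gamma_0,\gamma_1)$ when one of the two regions is simply connected (so that $\Gamma_0^* = \emptyset$). The proof of the corollary is therefore just the bookkeeping above, plus the two elementary observations already noted — that $\cB\setminus\cR$ is tileable and that the four combinations $t_i \sqcup t_x$ are tilings of the box $\cB$.
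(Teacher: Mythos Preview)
Your proof is correct and follows exactly the paper's approach: the paper's proof is the single line ``Use Lemma~\ref{lemma:secondOrderTwistDiff} with $\cR_0 = \cR$, $\cR_1 = \cB \setminus \cR$,'' and you have simply spelled out the bookkeeping behind that sentence. One small correction to your closing parenthetical: the reason the cross-linking sum vanishes is \emph{not} that $\Gamma_0^* = \emptyset$ (it generally is not --- two different tilings of a simply connected region typically produce nontrivial cycles), but rather that each $\gamma_0 \in \Gamma_0^*$ bounds a disk inside the simply connected $\cR_0$, and this disk avoids every $\gamma_1 \subset \interior(\cR_1)$, forcing $\Link(\gamma_0,\gamma_1)=0$.
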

\begin{proof}
Use Lemma \ref{lemma:secondOrderTwistDiff} with $\cR_0 = \cR$, $\cR_1 = \cB \setminus \cR$.
\end{proof}

\begin{lemma}
\label{lemma:trivialTilingBoxes}
Suppose $L,M,N$ are even positive integers, and let $\cB = [0,L] \times [0,M] \times [0,N]$. If $\cR \subset \cB$ is a cylinder with even depth, then there exists a tiling $t_*$ of $\cB \setminus \cR$ such that 
$\Tw(t \sqcup t_*) = \Tw(t)$
for each tiling $t$ of $\cR$.
\end{lemma}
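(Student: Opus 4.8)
After a lattice‑preserving isometry we may assume the axis of $\cR$ is $\ez$, so $\cR=\cD\times[c,c+n]$ with $n$ even and $0\le c\le c+n\le N$. Since $\cB$ is a box and $\cR$ a cylinder, $\Tw=T^{\vu}$ on each of them for every $\vu\in\Delta$ (Proposition~\ref{prop:equalTwistsMultiplex}); writing $\ST^{\vu}(A,B)=T^{\vu}(A,B)+T^{\vu}(B,A)$, bilinearity of the pretwist gives $\Tw(t\sqcup t_*)-\Tw(t)=T^{\ez}(t_*)+\ST^{\ez}(t,t_*)$ for every tiling $t$ of $\cR$. The plan is to show that this quantity is \emph{independent of $t$}, so that it suffices to produce one tiling $t_0$ of $\cR$ and one tiling $t_*$ of $\cB\setminus\cR$ with $\Tw(t_0\sqcup t_*)=\Tw(t_0)$, and then to exhibit such a pair.

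\textbf{Independence of $t$.} For tilings $t,t'$ of $\cR$, decomposing $t\cup(-t')$ into the curves of $\Gamma^*(t,t')$ (with the orientation bookkeeping of the proof of Lemma~\ref{lemma:secondOrderTwistDiff}) yields $[\Tw(t\sqcup t_*)-\Tw(t)]-[\Tw(t'\sqcup t_*)-\Tw(t')]=\ST^{\ez}(t,t_*)-\ST^{\ez}(t',t_*)=\sum_{\gamma\in\Gamma^*(t,t')}\ST^{\ez}(\gamma,t_*)$. Each such $\gamma$ lies in $\interior(\cR)$, hence is disjoint from $t_*$ and bounds a disk inside $\interior(\cR)$; consequently $\ST^{\ez}(\gamma,t_*)$ does not depend on the choice of $t_*$ (for two tilings $t_*,t_*'$ the difference is $\sum_{\gamma'\in\Gamma^*(t_*,t_*')}2\Link(\gamma,\gamma')$ by Lemma~\ref{lemma:linkingNumber}, and every $\Link(\gamma,\gamma')$ vanishes since $\gamma'\subset\interior(\cB\setminus\cR)$ misses the disk). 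Moreover $\ST^{\ez}(\gamma,\ell)=0$ whenever $\ell\parallel\ez$, and for a horizontal dimer $\ell$ it equals the signed crossing count of the projection of $\gamma$ with the projection of $\ell$, a number that depends only on the footprint and orientation of $\ell$, not its height. Reorganizing $\sum_{\ell\in t_*}$ by basic cube therefore expresses $\ST^{\ez}(\gamma,t_*)$ as a $\ccol$‑weighted sum over the cubes of $\cB\setminus\cR$ of a quantity constant along each $\ez$‑column; a flux‑vanishing argument in the style of Lemma~\ref{lemma:alternativeFluxFormula} then gives $0$, because every $\ez$‑column of $\cB$ and of $\cR$ has even length (so every $\ez$‑column of $\cB\setminus\cR$ is color‑balanced). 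Thus $\Tw(t\sqcup t_*)-\Tw(t)$ is the same for all $t$.

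\textbf{The pair.} If $c$ is even, $\cB\setminus\cR$ is the disjoint union of the $\ez$‑pseudocylinders $\bigl(([0,L]\times[0,M])\setminus\cD\bigr)\times[0,N]$, $\cD\times[0,c]$ and $\cD\times[c+n,N]$, all of even depth; take $t_*$ with every domino parallel to $\ez$. Such dominoes have no $\ez$‑effect, so $T^{\ez}(t_*)=0$ and $\ST^{\ez}(\tbasez(\cR),t_*)=0$, whence $\Tw(\tbasez(\cR)\sqcup t_*)=0=\Tw(\tbasez(\cR))$. If $c$ is odd, then $c-1\ge 0$ and $c+n+1\le N$ are even; build $t_*$ from $\ez$‑dominoes on $\cB\setminus\bigl(\cD\times[c-1,c+n+1]\bigr)$ together with tilings of the two unit‑depth collars $\cD\times[c-1,c]$ and $\cD\times[c+n,c+n+1]$ sharing the same planar footprint pattern. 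Because $c-1$ and $c+n$ have opposite parity the two collars carry opposite dimer orientations, so every $\ez$‑effect within or between the collars vanishes (equal footprints force parallel or anti‑parallel dominoes, hence a zero determinant); with $t_0=\tbasez(\cR)$ this again gives $T^{\ez}(t_0\sqcup t_*)=0=\Tw(t_0)$. Combining with the previous paragraph, $\Tw(t\sqcup t_*)-\Tw(t)=0$ for all $t$.

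\textbf{Main obstacle.} The only place genuine work is needed is the $c$ odd case when the base $\cD$ is not itself tileable by planar dominoes: the unit‑depth collars then cannot be tiled in isolation, and one must merge each collar with cubes borrowed from the surrounding $\bigl(([0,L]\times[0,M])\setminus\cD\bigr)\times[0,N]$. This first requires showing that $\cB\setminus\cR$ is tileable at all, and then arranging the merge so that the ``equal footprint, opposite orientation'' cancellation of $\ez$‑effects is preserved; I expect this bookkeeping, rather than any new idea, to be the hard part, everything else being routine once the crossing/linking machinery of Section~\ref{sec:topologicalGroundwork} is available.
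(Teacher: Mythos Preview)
Your proof is incomplete: you yourself flag the case $c$ odd with $\cD$ not planar-tileable as an unresolved obstacle, and it is a real one in your setup. The collar construction and the ``equal footprint, opposite orientation'' cancellation genuinely break down there, and the vague promise that merging with borrowed cubes is ``bookkeeping'' does not constitute a proof. Your independence-of-$t$ argument is also more elaborate than needed and somewhat sketchy (the ``reorganizing by basic cube'' step and the flux claim are asserted rather than carried out), though the conclusion there is recoverable from Lemma~\ref{lemma:secondOrderTwistDiff} directly, since $\cR$ is simply connected.

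The paper's proof sidesteps all of this with a two-line idea you missed: introduce the intermediate box $\cB_1 = [0,L]\times[0,M]\times[E,F]$ (where $\cR = \cD + [E,F]\ez$). Then $\cB_1 \setminus \cR = \big(([0,L]\times[0,M])\setminus\cD\big)\times[E,F]$ has every $\ez$-column of even length $F-E$, so it admits a tiling $t_{1,*}$ by $\ez$-dominoes regardless of the parity of $E$; hence $\Tw(t\sqcup t_{1,*}) = T^{\ez}(t\sqcup t_{1,*}) = T^{\ez}(t) = \Tw(t)$, using that $\cB_1$ is itself a box so Proposition~\ref{prop:equalTwistsMultiplex} applies. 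Next, $\cB\setminus\cB_1$ is tileable by $\ex$-dominoes because $L$ is even, giving $t_{2,*}$ with $\Tw(t'\sqcup t_{2,*}) = T^{\ex}(t'\sqcup t_{2,*}) = T^{\ex}(t') = \Tw(t')$ for any tiling $t'$ of $\cB_1$. Taking $t_* = t_{1,*}\sqcup t_{2,*}$ finishes it.

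The point you missed is that you are allowed to change axes mid-construction: since both $\cB_1$ and $\cB$ are boxes, all three pretwists agree on each, so you can use $\ez$ for the inner layer and $\ex$ for the outer slabs. This removes the need for any parity split on $c$, any planar tileability of $\cD$, any collar argument, and any separate independence-of-$t$ step.
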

Corollary \ref{coro:embeddableRegions} and Lemma \ref{lemma:trivialTilingBoxes} imply that for any tiling $\tilde{t}_*$ of $\cB \setminus \cR$, there exists a constant $K$ such that, for any tiling $t$ of $\cR$, $\Tw(t \sqcup \tilde{t}_*) = \Tw(t) + K$.  
\begin{proof}
%Since all dimensions of the box are even, we may assume without loss of generality (by Proposition \ref{prop:equalTwistsMultiplex}, rotating does not change the twist) that $\cR$ is a $\ez$-cylinder, so that $\cR = \cD \times [e,f]$, with $f - e$ even.
We may without loss of generality assume that the axis of $\cR$ is $\ez$, so that $\cR = \cD + [E,F]\ez$, where $\cD \subset [0,L] \times [0,M] \times \{0\}$ and $F - E$ is even.   

Let $\cB_1 = [0,L] \times [0,M] \times [E,F]$. Clearly there exists a tiling $t_{1,*}$ of $\cB_1 \setminus \cR$ such that every domino is parallel to $\ez$: hence, $\Tw(t \sqcup t_{1,*}) = T^{\ez}(t \sqcup t_{1,*}) = T^{\ez}(t) = \Tw(t)$ for each tiling $t$ of $\cR$. On the other hand, since $L$ is even, there exists a tiling $t_{2,*}$ of $\cB \setminus \cB_1$ such that every dimer is parallel to $\ex$, so that $\Tw(t \sqcup t_{2,*}) = T^{\ex}(t \sqcup t_{2,*}) = T^{\ex}(t) = \Tw(t)$ for each tiling $t$ of $\cB_1$. Setting $t_* = t_{1,*} \sqcup t_{2,*}$ we get the result. 
\end{proof}
\begin{lemma}
\label{lemma:trivialTilingAllMultiplexes}
Let $\cR$ be a tileable cylinder with base $\cD$, axis $\vw \in \Delta$ and depth $n$. Let $\cR' = \cD + [0,2n]\vw$ be a cylinder with even depth formed by two copies of $\cR$; let $\cB \supset \cR'$ be a box with all dimensions even. Then there exist a tiling $t_*$ of $\cB \setminus \cR$ and a constant $K$ such that, for each tiling $t$ of $\cR$, $\Tw(t \sqcup t_*) = \Tw(t) + K.$
\end{lemma}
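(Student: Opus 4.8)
The plan is to strip the ``easy'' part of $\cB\setminus\cR$ off using Lemma \ref{lemma:trivialTilingBoxes} applied to the even-depth cylinder $\cR'\subset\cB$, thereby reducing everything to a statement about the two copies of $\cR$ sitting inside $\cR'$; what then remains is that a single cross-term of the pretwist does not depend on $t$, and this is the main obstacle. First I would dispose of the case where the depth $n$ of $\cR$ is even: then $\cR$ is itself an even-depth cylinder contained in the even box $\cB$, so the remark following Lemma \ref{lemma:trivialTilingBoxes} (i.e.\ Lemma \ref{lemma:trivialTilingBoxes} together with Corollary \ref{coro:embeddableRegions}) already gives the claim with $t_*$ any tiling of $\cB\setminus\cR$. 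So assume $n$ odd.

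Since $\cR$ is tileable, fix a tiling $t_{\mathrm{ref}}$ of $\cR$ and let $s=t_{\mathrm{ref}}+n\vw$ be the corresponding tiling of $\cR'\setminus\cR=\cD+[n,2n]\vw$. Applying Lemma \ref{lemma:trivialTilingBoxes} to $\cR'\subset\cB$ produces a tiling $t_*'$ of $\cB\setminus\cR'$ with $\Tw(\hat t\sqcup t_*')=\Tw(\hat t)$ for every tiling $\hat t$ of $\cR'$. Put $t_*=t_*'\sqcup s$, a tiling of $\cB\setminus\cR$. For any tiling $t$ of $\cR$, the tiling $t\sqcup s$ of $\cR'$ satisfies $\Tw(t\sqcup t_*)=\Tw((t\sqcup s)\sqcup t_*')=\Tw(t\sqcup s)$, so it suffices to show $\Tw(t\sqcup s)=\Tw(t)+K$. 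Expanding ($\cR'$ is a cylinder), $\Tw(t\sqcup s)=T^\vw(t\sqcup s)=T^\vw(t)+T^\vw(s)+T^\vw(t,s)+T^\vw(s,t)$, where $T^\vw(t)=\Tw(t)$, $T^\vw(s)=\Tw(t_{\mathrm{ref}})$ by translation invariance of the pretwist, and $T^\vw(s,t)=0$ because every cube of $s$ has strictly larger $\vw$-coordinate than every cube of $\cR$, so the $\vw$-shade of a domino of $s$ misses $\cR$. Hence $\Tw(t\sqcup t_*)=\Tw(t)+\Tw(t_{\mathrm{ref}})+T^\vw(t,s)$, and everything comes down to showing that $T^\vw(t,s)$ is independent of $t$.

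This last point is the heart of the matter. By $\tau^\vw(d_0,d_1)=\tau^{-\vw}(d_1,d_0)$ (Lemma \ref{lemma:twistNegatingDirection}) we have $T^\vw(t,s)=\sum_{d'\in s}\sum_{d\in t}\tau^{-\vw}(d',d)$, and only the dominoes $d'$ of $s$ not parallel to $\vw$ contribute. Passing to dimers and comparing two tilings $t_a,t_b$ of $\cR$, the identity $\tau^{-\vw}(\ell',-\ell)=-\tau^{-\vw}(\ell',\ell)$ for a horizontal dimer $\ell$ (both sides vanish when $\ell\parallel\vw$) yields
$$T^\vw(t_a,s)-T^\vw(t_b,s)=\sum_{\ell'\in s}\ \sum_{\gamma\in\Gamma^*(t_a,t_b)}\ \sum_{\ell\in\gamma}\tau^{-\vw}(\ell',\ell),$$
with trivial curves of $\Gamma(t_a,t_b)$ dropping out. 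For a fixed horizontal $\ell'$ and $\gamma\in\Gamma^*(t_a,t_b)$, the inner sum is a quarter of the signed crossing number of the planar loop $\Pi^\vw(\gamma)\subset\cD$ with the projected unit segment $\Pi^\vw(\ell')$; perturbing $\Pi^\vw$ to $\Pi^\beta_{\pm\epsilon,\pm\epsilon}$ to make all crossings transversal (as in Lemmas \ref{lemma:transversalCrossings} and \ref{lemma:crossings}), this is the difference of the winding numbers of $\Pi^\vw(\gamma)$ about the two endpoints of $\Pi^\vw(\ell')$, and both vanish because $\cD$ is simply connected, so $\Pi^\vw(\gamma)$ is null-homotopic in $\cD$. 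Hence $T^\vw(t_a,s)=T^\vw(t_b,s)$, i.e.\ $T^\vw(t,s)\equiv T^\vw(t_{\mathrm{ref}},s)$; and $T^\vw(t_{\mathrm{ref}},s)=T^\vw(t_{\mathrm{ref}},t_{\mathrm{ref}}+n\vw)=0$ by the two-copies computation in the proof of Lemma \ref{lemma:equalTwistsNOdd}. Therefore $\Tw(t\sqcup t_*)=\Tw(t)+\Tw(t_{\mathrm{ref}})$ for every $t$, so $K=\Tw(t_{\mathrm{ref}})$ works. The only non-routine step, and the one I would spend the most care on, is this winding-number argument establishing the $t$-independence of $T^\vw(t,s)$ — in particular the reduction of the perturbed crossing count to winding numbers of $\Pi^\vw(\gamma)$ about lattice-adjacent points of $\cD$.
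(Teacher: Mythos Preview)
Your setup matches the paper's exactly (apply Lemma~\ref{lemma:trivialTilingBoxes} to $\cR'\subset\cB$, then fill $\cR'\setminus\cR$ with a fixed tiling), but you take a detour that contains a genuine error. You expand $\Tw(t\sqcup s)$ along the axis $\vw$, which produces the cross term $T^\vw(t,s)$, and then try to show this term is independent of $t$ via a winding-number argument. The claim that ``both winding numbers vanish because $\cD$ is simply connected, so $\Pi^\vw(\gamma)$ is null-homotopic in $\cD$'' is false: null-homotopy of a loop in $\cD$ says nothing about its winding number around a point \emph{inside} $\cD$, and the endpoints of $\Pi^\vw(\ell')$ are centers of squares of $\cD$. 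Concretely, if $\gamma$ is a $4$-cycle confined to a single floor (take $t_a,t_b$ differing by one planar flip) then $\Pi^\vw(\gamma)$ is a unit square, and for an $\ell'$ whose projection starts at a corner of that square the sum $\sum_{\ell\in\gamma}\tau^{-\vw}(\ell',\ell)$ equals $\pm\tfrac14$, not $0$. So the per-$\ell'$, per-$\gamma$ vanishing you assert does not hold; whatever cancellation makes $T^\vw(t,s)$ constant must use the full tiling $s$, not one segment at a time.

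The paper sidesteps all of this with a one-line observation: compute $\Tw(t\sqcup t_0)$ along a direction $\vu\perp\vw$ instead. Since $t$ lives in $\cD+[0,n]\vw$ and $t_0$ in $\cD+[n,2n]\vw$, the $\vu$-shade of any domino in one piece cannot meet the other piece, so $T^\vu(t,t_0)=T^\vu(t_0,t)=0$ and $\Tw(t\sqcup t_0)=T^\vu(t)+T^\vu(t_0)=\Tw(t)+\Tw(t_0)$ immediately. No case split on the parity of $n$, no choice of $t_{\text{ref}}$ as a translate, and no cross-term analysis is needed.
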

%Of course, when $\cR$ has even depth, we can find $t_*$ satisfying $K = 0$.
\begin{proof}
By Lemma \ref{lemma:trivialTilingBoxes}, there exists a tiling $\tilde{t}$ of $\cB \setminus \cR'$ such that $\Tw(t \sqcup \tilde{t}) = \Tw(t)$ for each tiling $t$ of $\cR'$.
Fix a tiling $t_0$ of $\cD + [n,2n]\vw$ (which is tileable because $\cR$ is tileable). If we set $t_* = t_0 \sqcup \tilde{t}$ and $K = \Tw(t_0)$, then for every tiling $t$ of $\cR$,
$$ \Tw(t \sqcup t_*) = \Tw(t \sqcup t_0 \sqcup \tilde{t}) = \Tw(t \sqcup t_0) = \Tw(t) + \Tw(t_0);$$
the last equality holding by fixing $\vu \in \Phi$, $\vu \perp \vw$ and writing $\Tw(t \sqcup t_0) = T^{\vu}(t \sqcup t_0) = T^{\vu}(t) + T^{\vu}(t_0)$. %where $\vw \perp \vu \in \Phi$.
\end{proof}

\begin{proof}[Proof of Theorem \ref{theo:main}]
The twist is constructed in Definition \ref{def:twist} and its integrality follows from Proposition \ref{prop:equalTwistsMultiplex}. Lemma \ref{lemma:fullyBalancedMultiplex} and Proposition \ref{prop:flipsAndTrits} yield items \ref{item:flips} and \ref{item:trits} . %We prove item \ref{item:duplexes} in Section \ref{sec:duplexRevisited}. We're left with proving item \ref{item:multiplexUnion}. 
To see item \ref{item:duplexes}, let $\cR$ be a duplex region with axis $\vw$, and consider the tiling $t_{\vw}$ such that all dominoes are parallel to $\vw$: clearly $\Tw(t_{\vw}) = P_{t_{\vw}}'(1) = 0$ (we assume that the reader is familiar with the notation from \cite{primeiroartigo}). Since the space of domino tilings of $\cR$ is connected by flips and trits (\cite[Theorem 2]{primeiroartigo}), Proposition \ref{prop:flipsAndTrits}, together with Theorems 1 and 2 from \cite{primeiroartigo}, implies that for each tiling $t$ of $\cR$, $\Tw(t) = P_t'(1)$ (for a more direct proof of item \ref{item:duplexes}, see \cite{webpageNicolau}).

We're left with proving item \ref{item:multiplexUnion}.
Let $\cR$ be a cylinder, and suppose $\cR = \bigcup_{1 \leq j \leq m} \cR_j$, where each $\cR_j$ is a cylinder (they need not have the same axis) and $\interior(\cR_i) \cap \interior(\cR_j) = \emptyset$ if $i \neq j$. Suppose the bases, axes and depths are respectively, $\cD,\vw,n$ and $\cD_j, \vw_j, n_j$.

Let $t_{j,0}$ and $t_{j,1}$ be two tilings of $\cR_j$. It suffices to show that
$$\Tw\left(\bigsqcup_{1 \leq j \leq m} t_{j,1}\right) - \Tw\left(\bigsqcup_{1 \leq j \leq m} t_{j,0}\right) = \sum_{1 \leq j \leq m} (\Tw(t_{j,1}) - \Tw(t_{j,0})).$$ 

For $0 \leq j \leq m$, let $t_j = \bigsqcup_{1 \leq i \leq j} t_{i,1} \sqcup \bigsqcup_{j < i \leq m} t_{i,0}$. We want to show that $\Tw(t_m) - \Tw(t_0) = \sum_{1 \leq j \leq m} (\Tw(t_{j,1}) - \Tw(t_{j,0}))$.

Let $\cB$ be a box with all dimensions even such that $\cD + [0,2n]\vw \subset \cB$ and $\cD_j + [0,2n_j]\vw_j \subset \cB$ for $j=1,\ldots,m$. By Lemma \ref{lemma:trivialTilingAllMultiplexes}, there exist: a tiling $t_*$ of $\cB \setminus \cR$ and a constant $K$; and for each $j$, a tiling $t_{j,*}$ of $\cB \setminus \cR_j$ and a constant $K_j$ such that $\Tw(t \sqcup t_{*}) = \Tw(t) + K$ for each tiling $t$ of $\cR$, and $\Tw(t \sqcup t_{j,*}) = \Tw(t) + K_j$ for each tiling $t$ of $\cR_j$.

Write $\hat{t}_j = t_* \sqcup \bigsqcup_{1 \leq i < j} t_{i,1} \sqcup \bigsqcup_{j < i \leq m} t_{i,0}$ for each $j$, so that $\hat{t}_j$ is a tiling of $\cB \setminus \cR_j$. Notice that, for $1 \leq j \leq m$, 
$t_j \sqcup t_* = t_{j,1} \sqcup \hat{t}_j$ and $t_{j-1} \sqcup t_* = t_{j,0} \sqcup \hat{t}_j.$  
Therefore, we have
\begin{align*}
&\Tw(t_m) - \Tw(t_0)
= \sum_{1 \leq j \leq m} (\Tw(t_j) - \Tw(t_{j-1}))\\ 
&= \sum_{1 \leq j \leq m} ((\Tw(t_j \sqcup t_*) - K) - (\Tw(t_{j-1} \sqcup t_*) - K))\\
&= \sum_{1 \leq j \leq m} (\Tw(t_{j,1} \sqcup \hat{t}_j) - \Tw(t_{j,0} \sqcup \hat{t}_j)) 
\stackrel{\dagger}{=} \sum_{1 \leq j \leq m} (\Tw(t_{j,1} \sqcup t_{j,*}) - \Tw(t_{j,0} \sqcup t_{j,*}))\\
&= \sum_{1 \leq j \leq m} ((\Tw(t_{j,1}) + K_j) - (\Tw(t_{j,0}) + K_j)) = \sum_{1 \leq j \leq m} (\Tw(t_{j,1}) - \Tw(t_{j,0})).
\end{align*}
Equality $\dagger$ holds by 
Corollary \ref{coro:embeddableRegions}, because $\hat{t}_j$ and $t_{j,*}$ are two tilings of $\cB \setminus \cR_j$.
%
%{\color[rgb]{0,0,1} This last step is incomplete and needs further explanation! Check which of the old lemmas need to be used here (maybe we still need to embed in a larger box, and use the trivial tiling Lemma)}.
\end{proof}

%%%--------------- End of topological part

\section{Examples and counterexamples}
\label{sec:examples}
In this short section, we give a few examples and counterexamples that help motivate the theory and some of the results obtained.

For instance, when looking at Proposition \ref{prop:equalTwistsMultiplex}, one might wonder whether the pretwists are always integers or if they always coincide, at least for, say, simply connected or contractible regions. This turns out not to be the case, as Figure \ref{subfig:twoDominoes} shows: for the tiling $t$ portrayed there, $T^{\ex}(t) = T^{\ey}(t) = 0$ but $T^{\ez}(t) = 1/4$.

\begin{figure}%
\centering
\subfloat[A tiling $t$ satisfying $T^{\ez}(t) = 1/4$.]{\includegraphics[width=0.2\columnwidth]{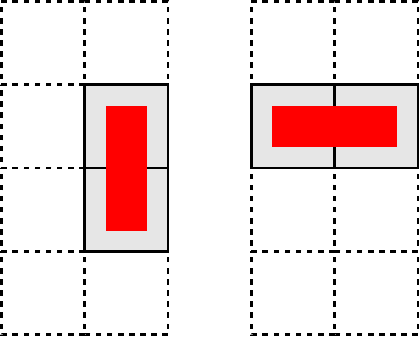}\label{subfig:twoDominoes}}%
\qquad \qquad
\subfloat[A tiling of a pseudocylinder where $T^{\ex}(t) \neq T^{\ez}(t)$.]{\includegraphics[width=0.3\columnwidth]{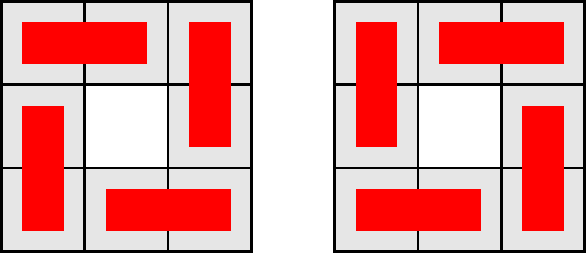}\label{subfig:pseudomultiplex}}
\caption{Two examples of tilings: $\ez$ is chosen to point towards the paper.}%
\label{fig:examples}%
\end{figure}

One might ask whether the pretwists coincide in a pseudocylinder (i.e., if the base is not necessarily simply connected): the tiling $t$ portrayed in Figure \ref{subfig:pseudomultiplex} satisfies $T^{\ex}(t) = T^{\ey}(t) = 0$ and $T^{\ez}(t) = 1$. 
One can prove that they coincide if the pseudocylinder has odd depth (via a modification in the proofs of Proposition \ref{prop:equalTwistsIntNEven} and \ref{prop:equalTwistsIntNOdd}), but we shall not dwell on this. %(see \cite{terceiroartigo} for a discussion of more general regions).

\begin{figure}%
\centering
\subfloat[A tiling of a cylinder with depth $3$ allowing no flips or trits.]{\includegraphics[width=0.5\columnwidth]{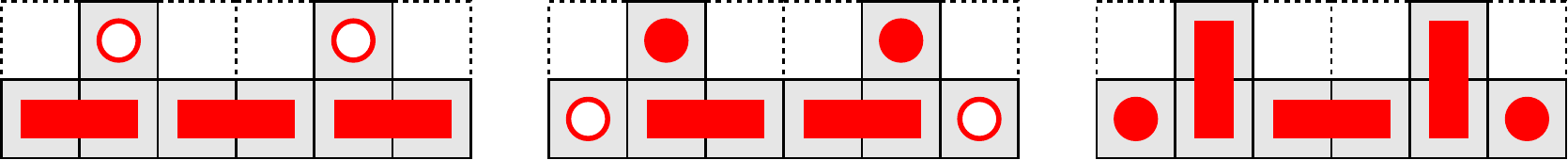}}%
\\
\subfloat[A tiling of a cylinder of depth $6$ whose flip and trit connected component contains only one other tiling.]{\includegraphics[width=\columnwidth]{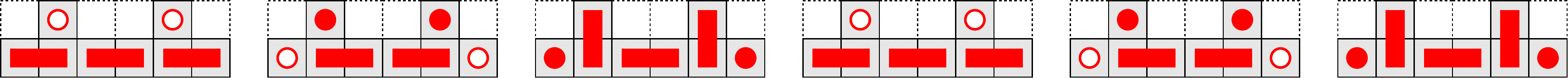}}%
\caption{Examples of regions whose space of tilings is not connected by flips and trits. Notice that these regions don't allow room for a trit, and flips are clearly insufficient to connect the space.}%
\label{fig:notConnectedByLocalMoves}%
\end{figure}

Connectivity by flips and trits does not hold for cylinders in general, unlike the case with two floors. Figure \ref{fig:notConnectedByLocalMoves} shows two counter-examples, one with odd depth and one with even depth. We do not know, however, whether this holds for boxes.

For more examples, we refer the reader to \cite{webpageNicolau}. A particularly interesting example is the $4 \times 4 \times 4$ box, which has 5051532105 tilings, divided into 93 flip connected components. The largest connected component has zero twist and 4412646453 tilings; and the values of the twist range from $-4$ to $4$. 

\bibliography{biblio}{}
\bibliographystyle{plain}

%\bigskip
\noindent
\footnotesize
Departamento de Matem\'atica, PUC-Rio \\
Rua Marqu\^es de S\~ao Vicente, 225, Rio de Janeiro, RJ 22451-900, Brazil \\
\url{milet@mat.puc-rio.br}\\
\url{saldanha@puc-rio.br}

\end{document}